\def\Z{{\mathbb{Z}}}
\def \<{\langle}
\def \>{\rangle}
\theoremstyle{definition}
\newtheorem{lemma}{Lemma}[section]
\newtheorem{theorem}[lemma]{Theorem}
\newtheorem{proposition}[lemma]{Proposition}
\newtheorem{definition}[lemma]{Definition}
\newtheorem{remark}[lemma]{Remark}
\title{2-permutations of  lattice vertex operator algebras: Higher rank
}
\author{Chongying Dong\footnote{Supported by NSF grant DMS-1404741 and China NSF grant 11371261}}
\affil{Department of Mathematics, University of
California, Santa Cruz, CA 95064 USA}
\author{Feng Xu\footnote{Partially supported by China NSF grant 11471064}}
\author{Nina Yu}
\affil{Department of Mathematics, University of California, Riverside, CA 92521 USA}
\begin{document}
\maketitle
\begin{abstract}
The fusion rules
of the 2-permutation orbifold of an arbitrary lattice vertex operator
algebra are determined by using the theory of quantum dimension.
\end{abstract}

\section{Introduction}

This paper is a continuation of our investigation on 2-permutation
of lattice vertex operator algebras \cite{DXY}. In particular, the
quantum dimensions of irreducible modules and the fusion rules are
determined. If the rank of the lattice is one, these results have
been obtained previously in \cite{DXY}.

Let $V$ be a vertex operator algebra and $n$ a fixed positive integer
and consider the tensor product vertex operator algebra $V^{\otimes n}$
\cite{FHL}. Then the symmetric group $S_{n}$ acts naturally on $V^{\otimes n}$
as automorphisms. The permutation orbifold theory has been studied
extensively in physics \cite{KS,FKS,BHS,Ba}. Conformal nets approach
to permutation orbifolds have been given in \cite{KLX}. Twisted sectors
of permutation orbifolds of tensor products of an arbitrary vertex
operator algebra have been constructed in \cite{BDM}. The $C_{2}$-cofiniteness
of permutation orbifolds and general cyclic orbifolds have been studied
in \cite{A3,A4,M}. But the representation theory such as rationality,
classification of irreducible modules, and fusion rules for the fixed
point vertex operator algebra $(V^{\otimes n})^{G}$ for any $n$
and any subgroup $G$ of $S_{n}$ have not been investigated much.

As a starting point, we studied representations of 2-permutation orbifold
model of rank one lattice vertex operator algebras in \cite{DXY}.
In this paper, we complete the study of 2-permutation orbifold model
of lattice vertex operator algebras $V_{L}$ for any positive definite
even lattice $L.$ Similar to rank one case, the permutation orbifold
model $(V_{L}\otimes V_{L})^{\Z_{2}}$ can be realized as a simple
current extension of the rational vertex operator algebra $V_{\sqrt{2}L}\otimes V_{\sqrt{2}L}^{+}$.
It follows from \cite{Y,HKL} that $(V_{L}\otimes V_{L})^{\Z_{2}}$
is rational. According to \cite{DRX}, every irreducible $(V_{L}\otimes V_{L})^{\Z_{2}}$-module
occurs in an irreducible $g$-twisted $V_{L}\otimes V_{L}$-module. So the classification of irreducible
$(V_{L}\otimes V_{L})^{\Z_{2}}$-modules is known. But this classification result does not suggest
how to compute the fusion rules among the irreducible modules.
The main idea is to use the general theory of simple
current extension of a rational vertex operator algebra and representations
of $V_{L}$ and $V_{L}^{+}$ to study the representations of $\left(V_{L}\otimes V_{L}\right)^{\mathbb{Z}_{2}}$.
We decompose each irreducible $V_{L}\otimes V_{L}$-module
into a direct sum of irreducible $(V_{L}\otimes V_{L})^{\mathbb{Z}_{2}}$-modules
by using the fusion rules for both vertex operator algebras $V_{\sqrt{2}L}$
and $V_{\sqrt{2}L}^{+}$ \cite{DL1,A1,ADL}. This decomposition is crucial in computing
the fusion rules. We emphasize that the theory of quantum dimensions introduced and studied in \cite{DJX, DRX} plays an essential role in computing the fusion rules. It is not clear to us how to achieve this  without using the quantum dimensions. The fusion rules in conformal nets for any 2-permutation
models were computed by using the $S$-matrix \cite{KLX}.

We should mention that the constructions of $g$-twisted modules for lattice vertex operator algebra $V_L$ where
$g$ is automorphism of finite order induced from an isometry of $L$ were already given in
 \cite{FLM1, FLM2, L, DL2}.  In the case $g$ is of order $2,$ the irreducible modules of $V_L^{\<g\>}$
 have been classified recently in \cite{BE}. An equivalence of two constructions of permutation-twisted modules
 for lattice vertex operator algebras in \cite{FLM1, L} and \cite{BDM} was given in \cite{BHL}.

The paper is organized as follows: $ $\S2 and \S3 are preliminaries
on the vertex operator algebras theory. In these sections we give
some basic notions that appear in this paper and recall the constructions
of the lattice type vertex operator algebras $V_{L}$ and $V_{L}^{+}$
and their (twisted) modules. In \S4 we study $\left(V_{L}\otimes V_{L}\right)^{\mathbb{Z}_{2}}$,
the 2-cyclic permutation orbifold models for rank $d$ lattice vertex
operator algebras. In particular, we decompose each irreducible $V_{L}\otimes V_{L}$-module into a direct sum of irreducible $\left(V_{L}\otimes V_{L}\right)^{\mathbb{Z}_{2}}$-modules.
The quantum dimensions of all irreducible modules
of $\left(V_{L}\otimes V_{L}\right)^{\mathbb{Z}_{2}}$ are obtained explicitly
in \S5. Finally, we apply results from the previous sections to determine
all fusion products in \S6.

\section{Preliminaries}

Let $\left(V,Y,\mathbf{1},\omega\right)$ be a vertex operator algebra
\cite{Bo,FLM2} and $g$ an automorphism of vertex operator algebra
$V$ of order $T$. Denote the decomposition of $V$ into eigenspaces
of $g$ as:

\[
V=\oplus_{r=0}^{T-1}V^{r},\ V^{r}=\left\{ v\in V|gv=e^{2\pi ir/T}v\right\} .
\]

Here are the definitions of weak, admissible, ordinary $g$-twisted
$V$-modules \cite{DLM3}. \begin{definition}A \emph{weak $g$-twisted
$V$-module} $M$ is a vector space with a linear map
\[
Y_{M}:V\to\left(\text{End}M\right)\{z\}
\]

\[
v\mapsto Y_{M}\left(v,z\right)=\sum_{n\in\mathbb{Q}}v_{n}z^{-n-1}\ \left(v_{n}\in\mbox{End}M\right)
\]

which satisfies the following: for all $0\le r\le T-1$, $u\in V^{r}$,
$v\in V$, $w\in M$,

(1) $u_{l}w=0$ if $l$ is sufficiently large,

(2) $Y_{M}\left(u,z\right)=\sum_{n\in\frac{r}{T}+\mathbb{Z}}u_{n}z^{-n-1},$

(3) $Y_{M}\left(\mathbf{1},z\right)=Id_{M},$

(4) (Twisted Jacobi identity)

\[
z_{0}^{-1}\text{\ensuremath{\delta}}\left(\frac{z_{1}-z_{2}}{z_{0}}\right)Y_{M}\left(u,z_{1}\right)Y_{M}\left(v,z_{2}\right)-z_{0}^{-1}\delta\left(\frac{z_{2}-z_{1}}{-z_{0}}\right)Y_{M}\left(v,z_{2}\right)Y_{M}\left(u,z_{1}\right)
\]

\[
z_{2}^{-1}\left(\frac{z_{1}-z_{0}}{z_{2}}\right)^{-r/T}\delta\left(\frac{z_{1}-z_{0}}{z_{2}}\right)Y_{M}\left(Y\left(u,z_{0}\right)v,z_{2}\right),
\]
where $\delta\left(z\right)=\sum_{n\in\mathbb{Z}}z^{n}$. \end{definition}

\begin{definition}An \emph{admissible $g$-twisted $V$-module} $M=\oplus_{n\in\frac{1}{T}\mathbb{Z}_{+}}M\left(n\right)$
is a $\frac{1}{T}\mathbb{Z}_{+}$-graded weak $g$-twisted module
such that $u_{m}M\left(n\right)\subset M\left(\mbox{wt}u-m-1+n\right)$
for homogeneous $u\in V$ and $m,n\in\frac{1}{T}\mathbb{Z}.$ $ $
\end{definition}

\begin{definition}

A (ordinary) $g$-\emph{twisted $V$-module} is a weak $g$-twisted
$V$-module $M$ which carries a $\mathbb{C}$-grading induced by
the spectrum of $L(0)$, where $L(0)$ is the component operator of
$Y(\omega,z)=\sum_{n\in\mathbb{Z}}L(n)z^{-n-2}.$ That is, we have
$M=\bigoplus_{\lambda\in\mathbb{C}}M_{\lambda},$ where $M_{\lambda}=\{w\in M|L(0)w=\lambda w\}$.
Moreover, $\dim M_{\lambda}$ is finite and for fixed $\lambda,$
$M_{\frac{n}{T}+\lambda}=0$ for all small enough integers $n.$ A
vector $w\in M_{\lambda}$ is called a weight vector of weight $\lambda$,
and write $\lambda=\mbox{wt}w$.

\end{definition}

\begin{remark} \label{g action}If $g=Id_{V}$ we have the notions
of weak, ordinary and admissible $V$-modules.

Note that the cyclic group $\left\langle g\right\rangle $ generated
by $g$ acts on any admissible $g$-twisted $V$-module $M$ such
that $g|_{M(n)}=e^{-2\pi in}$ for $n\in\frac{1}{T}\Z$ and $gY_{M}(v,z)g^{-1}=Y_{M}(gv,z)$
for all $v\in V.$ In particular, $M^{r}=\oplus_{n\in\Z}M(\frac{r}{T}+n)$
is an admissible $V^{\left\langle g\right\rangle }$-module for $r=0,...,T-1.$
Moreover, if $M$ is irreducible then each $M^{r}$ is irreducible
admissible $V^{\left\langle g\right\rangle }$-module \cite{DY,MT,DRX}.

\end{remark}

\begin{definition}A vertex operator algebra $V$ is called \emph{$g$-rational}
if the admissible $g$-twisted module category is semisimple. $V$
is called \emph{rational} if $V$ is $1$-rational. \end{definition}

\begin{definition} A vertex operator algebra $V$ is said to be \emph{$C_{2}$-cofinite}
if $V/C_{2}(V)$ is finite dimensional, where $C_{2}(V)=\langle v_{-2}u|v,u\in V\rangle.$
\end{definition}

\begin{remark} If vertex operator algebra $V$ is rational or $C_{2}$-cofinite,
then $V$ has only finitely many irreducible admissible modules up
to isomorphism and each irreducible admissible module is ordinary
\cite{DLM3,Li}. \end{remark}

Now we consider the tensor product vertex algebras and the tensor
product modules for tensor product vertex operator algebras. The tensor
product of vertex operator algebras $\left(V^{1},Y^{1},1,\omega^{1}\right)$
and $\left(V^{2},Y^{2},1,\omega^{2}\right)$ is constructed on the
tensor product vector space $V=V^{1}\otimes V^{2}$ where the vertex
operator $Y\left(\cdot,z\right)$ is defined by $Y\left(v^{1}\otimes v^{2},z\right)=Y\left(v^{1},z\right)\otimes Y\left(v^{2},z\right)$
for $v^{i}\in V^{i}$, $i=1,2$, the vacuum vector is $\mathbf{1}=1\otimes1$
and the Virasoro element is $\omega=\omega^{1}\otimes\omega^{2}$.
Then $\left(V,Y,\mathbf{1},\omega\right)$ is a vertex operator algebra
\cite{FHL,LL}. Let $W^{i}$ be an admissible $V^{i}$-module for
$i=1,2$. We may construct the tensor product admissible module $W^{1}\otimes W^{2}$
for the tensor product vertex operator algebra $V^{1}\otimes V^{2}$
by $Y\left(v^{1}\otimes v^{2},z\right)=Y\left(v^{1},z\right)\otimes Y\left(v^{2},z\right)$.
Then $W^{1}\otimes W^{2}$ is an admissible $V^{1}\otimes V^{2}$-module.
We have the following result about tensor product modules \cite{DMZ,FHL}:

\begin{theorem} Let $V^{1},V^{2}$ be rational vertex operator algebras,
then $V^{1}\otimes V^{2}$ is rational and any irreducible $V^{1}\otimes V^{2}$-module
is a tensor product $W^{1}\otimes W^{2}$ for some irreducible $V^{i}$-module
$W^{i}$ and $i=1,2.$

\end{theorem}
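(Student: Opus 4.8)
The plan is to exploit the fact that, inside $V^{1}\otimes V^{2}$, the operators coming from the two tensor factors commute. Abbreviating $u$ for $u\otimes\mathbf{1}$ and $v$ for $\mathbf{1}\otimes v$ (with $u\in V^{1}$, $v\in V^{2}$), the commutator formula gives $[u_{m},v_{n}]=\sum_{i\geq 0}\binom{m}{i}(u_{i}v)_{m+n-i}$, and since $u_{i}(\mathbf{1}\otimes v)=(u_{i}\mathbf{1})\otimes v=0$ for $i\geq 0$ by the creation property of the vacuum, we obtain $[u_{m},v_{n}]=0$ as operators on every $V^{1}\otimes V^{2}$-module. Thus the image of $V^{1}$ and the image of $V^{2}$ act by commuting families of operators, and this single observation drives the whole argument.

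For the classification I would take an irreducible $V^{1}\otimes V^{2}$-module $M$ and restrict it to $V^{1}=V^{1}\otimes\mathbf{1}$. The grading of $M$ makes it an admissible $V^{1}$-module, so by rationality of $V^{1}$ it is a direct sum of irreducible (hence ordinary) $V^{1}$-modules; I fix one such summand $W^{1}$. I then form the multiplicity space $W^{2}:=\mathrm{Hom}_{V^{1}}(W^{1},M)$, which becomes an admissible $V^{2}$-module because each mode $v_{n}$ ($v\in V^{2}$) commutes with the $V^{1}$-action and so carries $V^{1}$-homomorphisms to $V^{1}$-homomorphisms; the module axioms for $W^{2}$ are inherited from those for $M$, and the grading is induced from that of $M$. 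The evaluation map $\Phi\colon W^{1}\otimes W^{2}\to M$, $w\otimes f\mapsto f(w)$, is then a homomorphism of $V^{1}\otimes V^{2}$-modules (it intertwines $V^{1}$ because each $f$ is a $V^{1}$-map and intertwines $V^{2}$ by the definition of the action on $W^{2}$). Its image is a nonzero submodule, so $\Phi$ is surjective by irreducibility of $M$; injectivity is the standard fact that the $W^{1}$-isotypic component of a semisimple $V^{1}$-module is canonically $W^{1}\otimes\mathrm{Hom}_{V^{1}}(W^{1},M)$. Hence $M\cong W^{1}\otimes W^{2}$, and $W^{2}$ must be irreducible, since any nonzero proper $V^{2}$-submodule $U\subsetneq W^{2}$ would produce the proper nonzero $V^{1}\otimes V^{2}$-submodule $W^{1}\otimes U$ of $M$.

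For rationality I would take an arbitrary admissible $V^{1}\otimes V^{2}$-module $M$ and run the same isotypic analysis. Restricting to $V^{1}$ and using its rationality, I decompose $M$ into $V^{1}$-isotypic components $M^{[\alpha]}\cong W^{1}_{\alpha}\otimes C_{\alpha}$ with $C_{\alpha}=\mathrm{Hom}_{V^{1}}(W^{1}_{\alpha},M)$ an admissible $V^{2}$-module as above. By rationality of $V^{2}$ each $C_{\alpha}$ splits as $\bigoplus_{\beta}W^{2}_{\alpha\beta}$ into irreducibles, giving $M\cong\bigoplus_{\alpha,\beta}W^{1}_{\alpha}\otimes W^{2}_{\alpha\beta}$. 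To see this is a decomposition into irreducibles I would establish the complementary fact that $W^{1}\otimes W^{2}$ is irreducible whenever $W^{1},W^{2}$ are: any $V^{1}$-submodule of $W^{1}\otimes W^{2}$ has the form $W^{1}\otimes U$ by Schur's lemma for the irreducible module $W^{1}$, and $V^{2}$-stability of the submodule forces $U$ to be a $V^{2}$-submodule of the irreducible $W^{2}$, hence $0$ or everything. This proves $V^{1}\otimes V^{2}$ rational.

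The main obstacle I anticipate is the repeated, careful use of Schur's lemma in this graded, infinite-dimensional setting, namely that $\mathrm{End}_{V^{1}}(W^{1})=\mathbb{C}$ for an irreducible ordinary module $W^{1}$, together with the resulting description of $V^{1}$-submodules of an isotypic space as $W^{1}\otimes U$. This rests on the finite-dimensionality of the lowest-weight space of $W^{1}$ and the grading-preserving nature of module endomorphisms, and it is exactly what makes both the injectivity of $\Phi$ and the irreducibility of tensor products go through; by comparison, the commutativity computation and the bookkeeping of gradings on the multiplicity spaces are routine.
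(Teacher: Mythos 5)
Your proposal is correct, but note that the paper itself contains no proof of this statement to compare against: the theorem is quoted as a known result with citations to \cite{DMZ} and \cite{FHL}. What you have written is essentially a reconstruction of the standard argument from those references: the commutator formula shows the modes of $V^{1}\otimes\mathbf{1}$ and $\mathbf{1}\otimes V^{2}$ commute on any weak module; restriction to $V^{1}$ plus rationality gives a semisimple decomposition; the multiplicity space $\mathrm{Hom}_{V^{1}}(W^{1},M)$ carries a commuting admissible $V^{2}$-module structure; and the evaluation map together with Schur's lemma for irreducible ordinary modules yields both the classification of irreducibles and semisimplicity of arbitrary admissible modules. The technical points you flag are indeed the ones requiring care, and they all go through: $\mathrm{End}_{V^{1}}(W^{1})=\mathbb{C}$ holds because the lowest graded piece is finite dimensional and generates; $\mathrm{Hom}_{V^{1}}(W^{1},M)$ decomposes into graded pieces (a homomorphism is determined on the generating lowest piece, so it splits into degree-homogeneous components), which is what makes the multiplicity space admissible with grading bounded below; and intertwining only the actions of $u\otimes\mathbf{1}$ and $\mathbf{1}\otimes v$ suffices for a full $V^{1}\otimes V^{2}$-homomorphism because $u\otimes v=(u\otimes\mathbf{1})_{-1}(\mathbf{1}\otimes v)$ and the Borcherds identity expresses the modes of an iterate through the modes of its factors on any weak module. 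So your argument is a complete and faithful account of the proof the paper delegates to the literature.
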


Let $M=\bigoplus_{n\in\frac{1}{T}\mathbb{Z}_{+}}M(n)$ be an admissible
$g$-twisted $V$-module, the contragredient module $M'$ is defined
as follows:
\[
M'=\bigoplus_{n\in\frac{1}{T}\mathbb{Z}_{+}}M(n)^{*},
\]
where $M(n)^{*}=\mbox{Hom}_{\mathbb{C}}(M(n),\mathbb{C}).$ The vertex
operator $Y_{M'}(v,z)$ is defined for $v\in V$ via
\begin{eqnarray*}
\langle Y_{M'}(v,z)f,u\rangle= & \langle f,Y_{M}(e^{zL(1)}(-z^{-2})^{L(0)}v,z^{-1})u\rangle
\end{eqnarray*}
where $\langle f,w\rangle=f(w)$ is the natural paring $M'\times M\to\mathbb{C}.$
Then $M'$ is an admissible $g^{-1}$-twisted $V$-module \cite{X}.
A $V$-module $M$ is said to be \emph{ self dual} if $M$ and $M'$ are isomorphic
$V$-modules.

We now recall the notion of intertwining operators and fusion rules
\cite{FHL}:
\begin{definition} Let $(V,\ Y)$ be a vertex operator
algebra and let $(W^{1},\ Y^{1}),\ (W^{2},\ Y^{2})$ and $(W^{3},\ Y^{3})$
be $V$-modules. An intertwining operator of type $\left(\begin{array}{c}
W^{1}\\
W^{2\ }W^{3}
\end{array}\right)$ is a linear map
\[
I(\cdot,\ z):\ W^{2}\to\text{\ensuremath{\mbox{Hom}(W^{3},\ W^{1})\{z\}}}
\]

\[
u\to I(u,\ z)=\sum_{n\in\mathbb{Q}}u_{n}z^{-n-1}
\]
satisfying:

(1) for any $u\in W^{2}$ and $v\in W^{3}$, $u_{n}v=0$ for $n$
sufficiently large;

(2) $I(L_{-1}v,\ z)=(\frac{d}{dz})I(v,\ z)$;

(3) (Jacobi Identity) for any $u\in V,\ v\in W^{2}$

\[
z_{0}^{-1}\delta\left(\frac{z_{1}-z_{2}}{z_{0}}\right)Y^{1}(u,\ z_{1})I(v,\ z_{2})-z_{0}^{-1}\delta\left(\frac{-z_{2}+z_{1}}{z_{0}}\right)I(v,\ z_{2})Y^{3}(u,\ z_{1})
\]
\[
=z_{2}^{-1}\left(\frac{z_{1}-z_{0}}{z_{2}}\right)I(Y^{2}(u,\ z_{0})v,\ z_{2}).
\]

We denote the space of all intertwining operators of type $\left(\begin{array}{c}
W^{1}\\
W^{2}\ W^{3}
\end{array}\right)$ by $I_{V}\left(\begin{array}{c}
W^{1}\\
W^{2}\ W^{3}
\end{array}\right).$ Let $N_{W^{2},\ W^{3}}^{W^{1}}=N_{V}\left(\begin{array}{c}
W^{1}\\
W^{2}\ W^{3}
\end{array}\right)=\dim I_{V}\left(\begin{array}{c}
W^{1}\\
W^{2}\ W^{3}
\end{array}\right)$. These integers $N_{W^{2},\ W^{3}}^{W^{1}}$ are usually called the
\emph{fusion rules}. \end{definition}

\begin{definition} Let $V$ be a vertex operator algebra, and $W^{1},$
$W^{2}$ be two $V$-modules. A module $(W,I)$, where $I\in I_{V}\left(\begin{array}{c}
\ \ W\ \\
W^{1}\ \ W^{2}
\end{array}\right),$ is called a \emph{fusion product} of $W^{1}$ and $W^{2}$ if for any $V$-module
$M$ and $\mathcal{Y}\in I_{V}\left(\begin{array}{c}
\ \ M\ \\
W^{1}\ \ W^{2}
\end{array}\right),$ there is a unique $V$-module homomorphism $f:W\rightarrow M,$ such
that $\mathcal{Y}=f\circ I.$ As usual, we denote $(W,I)$ by $W^{1}\boxtimes_{V}W^{2}.$
\end{definition}

It is well known that if $V$ is rational, then the fusion product
exists. We shall often consider the fusion product
\[
W^{1}\boxtimes_{V}W^{2}=\sum_{W}N_{W^{1},\ W^{2}}^{W}W
\]
where $W$ runs over the set of equivalence classes of irreducible
$V$-modules.

The fusion rules satisfy the following symmetry \cite{FHL}.
\begin{proposition}\label{fusion rule symmmetry property}
Let $W^{i}$ $\left(i=1,2,3\right)$ be $V$-modules. Then

\[
\dim I_{V}\left(_{W^{1}W^{2}}^{\ \ W^{3}}\right)=\dim I_{V}\left(_{W^{2}W^{1}}^{\ \ W^{3}}\right),\dim I_{V}\left(_{W^{1}W^{2}}^{\ \ W^{3}}\right)=\dim I_{V}\left(_{W^{1}\left(\ W^{3}\right)'}^{\ \ \left(W^{2}\right)'}\right).
\]

\end{proposition}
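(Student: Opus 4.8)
The plan is to exhibit, for each of the two claimed equalities, an explicit linear isomorphism between the corresponding spaces of intertwining operators; since an invertible linear map preserves dimension, the equalities of fusion rules follow at once.

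For the first equality I would use the \emph{skew-symmetry} operation. Given $I\in I_{V}\binom{W^{3}}{W^{1}W^{2}}$, so that $I(w^{1},z)w^{2}\in W^{3}\{z\}$ for $w^{1}\in W^{1}$, $w^{2}\in W^{2}$, define $\Omega(I)$ by
\[
\Omega(I)(w^{2},z)w^{1}=e^{zL(-1)}I(w^{1},e^{\pi i}z)w^{2},
\]
where $e^{\pi i}z$ denotes $-z$ together with the branch convention needed to interpret the fractional powers of $z$. First I would verify that $\Omega(I)$ satisfies the defining axioms of an intertwining operator of type $\binom{W^{3}}{W^{2}W^{1}}$: the lower-truncation and $L(-1)$-derivative conditions follow from the commutation of $L(-1)$ with $I(\cdot,z)$ and the chain rule, while the Jacobi identity is obtained from the Jacobi identity for $I$ by the substitution $z_{1}\leftrightarrow z_{2}$, $z_{0}\mapsto-z_{0}$ together with the translation property $e^{zL(-1)}Y(u,z_{1})e^{-zL(-1)}=Y(u,z_{1}+z)$. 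A short computation shows that the operation of the same form in the reverse direction inverts $\Omega$, so $\Omega$ is a linear isomorphism.

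For the second equality I would use the \emph{adjoint} (contragredient) operation, modeled on the definition of the contragredient module recalled above. Given $I\in I_{V}\binom{W^{3}}{W^{1}W^{2}}$, define $A(I)$ of type $\binom{(W^{2})'}{W^{1}(W^{3})'}$ by
\[
\<A(I)(w^{1},z)f,w^{2}\>=\<f,I(e^{zL(1)}(e^{\pi i}z^{-2})^{L(0)}w^{1},z^{-1})w^{2}\>
\]
for $w^{1}\in W^{1}$, $f\in(W^{3})'$, $w^{2}\in W^{2}$, where $\<\cdot,\cdot\>$ is the canonical pairing. Again the truncation and $L(-1)$-derivative properties are straightforward, and the content is that the Jacobi identity for $A(I)$ follows from that of $I$. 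Invertibility is checked by composing $A$ with the corresponding operation in the reverse direction and using the natural isomorphism $(M')'\cong M$ for the modules involved.

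The main obstacle is the verification of the Jacobi identity for the adjoint operation $A$. This requires carefully tracking the substitution $z\mapsto z^{-1}$ together with the conformal-weight operators $e^{zL(1)}(e^{\pi i}z^{-2})^{L(0)}$, and then transporting the three-term delta-function identity for $I$ through the pairing $\<\cdot,\cdot\>$ so that it reassembles into the Jacobi identity for $A(I)$. The formal-calculus bookkeeping --- handling the formal expansions of the delta functions and the fractional powers consistently --- is delicate, though routine in spirit once the correct branch conventions are fixed.
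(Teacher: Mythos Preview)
Your proposal is correct and is precisely the standard argument from \cite{FHL}: the skew-symmetry isomorphism $\Omega$ for the first equality and the contragredient/adjoint isomorphism $A$ for the second, with invertibility checked by iterating the constructions. The paper does not supply its own proof of this proposition---it simply quotes the result from \cite{FHL}---so your approach is exactly the one the paper is implicitly relying on.
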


\begin{definition} Let $V$ be a simple vertex operator algebra.
A simple $V$-module $M$ is called \emph{a simple current} if for
any irreducible $V$-module $W$, $W\boxtimes M$ exists and is also
a simple $V$-module. \end{definition}

Let $D$ be a finite abelian group and assume that we have a set of
irreducible simple current $V^{0}$-modules $\left\{ V^{\alpha}|\alpha\in D\right\} $
indexed by $D$. The following definition is from \cite{Y}.

\begin{definition} An extension $V_{D}=\oplus_{\alpha\in D}V^{\alpha}$
of $V^{0}$ is called a $D$-\emph{graded simple current extension
}if $V_{D}$ carries a structure of a simple vertex operator algebra
such that $Y\left(u^{\alpha},z\right)u^{\beta}\in V^{\alpha+\beta}\left(\left(z\right)\right)$
for any $u^{\alpha}\in V^{\alpha}$ and $u^{\beta}\in V^{\beta}.$
\end{definition}

\section{Vertex Operator algebra $V_{L}$ and $V_{L}^{+}$}

We first review the construction of the vertex operator algebra $V_{L}$
associated with a positive definite even lattice $L$ \cite{Bo,FLM2}.

We are working in the setting of \cite{DL1,FLM2}. Let $L$ be a positive
definite even lattice with bilinear form $\left\langle \cdot,\cdot\right\rangle $
and $L^{\circ}$ its dual lattice in $\mathfrak{h}=\mathbb{C}\otimes_{\mathbb{Z}}L$.
Let $\left\{ \lambda_{0}=0,\lambda_{1},\lambda_{2},\cdots\right\} $
be a complete set of coset representatives of $L$ in $L^{\circ}.$
Then the lattice vertex operator algebra $V_{L}$ is rational and
$V_{\lambda_{i}+L}\mbox{}$ are the irreducible $V_{L}$-modules \cite{Bo,FLM2,D1,DLM2}.

Now assume $M$ is positive definite even lattice such that $\left\langle \alpha,\beta\right\rangle \in2\mathbb{Z}$
for $\alpha,\beta\in M$. In this case, $V_{M+\lambda}=S\left(\mathfrak{h}\otimes t^{-1}\mathbb{C}\left[t^{-1}\right]\right)\otimes\mathbb{C}\left[\lambda+M\right]$
for any $\lambda\in M^{\circ}$ where $S\left(\cdot\right)$ is the
symmetric algebra and $\mathbb{C}\left[\lambda+M\right]=\sum_{\alpha\in M}\mathbb{C} e_{\lambda+\alpha}$
is the subspace of the group algebra $\mathbb{C}\left[M^{\circ}\right]$
corresponds to $\lambda+M$. Then $V_{M}$ has a canonical automorphism
$\theta$ of order 2 induced from $-1$ isometry of $M$. In fact,
we can define a linear map $\theta$ from $V_{\lambda+M}$ to $V_{-\lambda+M}$
for any $\lambda\in M^{\circ}$ such that $\theta Y_{\lambda+M}\left(u,z\right)\theta^{-1}=Y_{-\lambda+M}\left(\theta u,z\right)$
for any $u\in V_{M}$ where $Y_{\lambda+M}$ defines a $V_{M}$-module
structure on $V_{\lambda+M}$ \cite{AD}. Clearly, if $2\lambda\in M$,
$\theta$ is an endormorphism from $V_{\lambda+M}$ to $V_{\lambda+M}$.
For such $\lambda$ we denote eigenspace of $\theta$ with eigenvalue
$\pm1$ in $V_{\lambda+M}$ by $V_{\lambda+M}^{\pm}$.

We now turn our attention to the construction of $\theta$-twisted
$V_{M}$-modules \cite{FLM1,FLM2,L,DL2}. Note that $M/2M$ is an
abelian group of order $2^{d}$ where $d$ is the rank of $M$. Then
$M/2M$ has exactly $2^{d}$ inequivalent irreducible modules $T_{\chi}$
where $\chi$ is irreducible character of $M/2M$. It was proved in
\cite{FLM2} that $V_{M}^{T_{\chi}}=S\left(\mathfrak{\mathfrak{h}\otimes}\left(t^{-\frac{1}{2}}\right)\mathbb{C}\left[t^{-1}\right]\right)\otimes T_{\chi}$
is an irreducible $\theta$-twisted module. According to Remark \ref{g action},
$\theta$ acts on $V_{M}^{T_{\chi}}$. Again we denote eigenspace
of $\theta$ with eigenvalue $\pm1$ by $V_{M}^{T_{\chi,}\pm}$. Moreover,
$V_{M}$ is $\theta$-rational and $\left\{ V_{M}^{T_{\chi}}|\chi\right\} $
gives a complete list of inequivalent irreducible $\theta$-twisted
$V_{M}$-modules \cite{D2}.

We have the following classification of the irreducible $V_{L}^{+}$-modules
\cite{AD,DN}:

\begin{theorem} Let $M$ be a positive definite even lattice and
$\left\{ \lambda_{i}\right\} $ be a set of coset representatives
of $M$ in $M^{\circ}$. Then any irreducible $V_{M}^{+}$-module
is isomorphic to one of the following:
\[
V_{\lambda_{i}+M}\left(2\lambda_{i}\notin M\right),V_{\lambda_{i}+M}^{\pm}\left(2\lambda_{i}\in M\right),V_{M}^{T_{\chi},\pm}.
\]
Furthermore, $V_{\lambda_{i}+M}\cong V_{\lambda_{j}+M}$ if and only
if $\lambda_{i}\pm\lambda_{j}\in M$.

\end{theorem}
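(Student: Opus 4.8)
The plan is to deduce the classification from the $\Z_2$-orbifold principle for the pair $(V_M,\theta)$, using that $V_M$ is both rational and $\theta$-rational. Since $\theta$ has order $2$, the only relevant twisted sectors are the untwisted one and the $\theta$-twisted one. By the general result of \cite{DRX} (the same principle invoked for $(V_L\otimes V_L)^{\Z_2}$ in the introduction), every irreducible $V_M^+$-module occurs as a $V_M^+$-submodule of an irreducible $\theta^i$-twisted $V_M$-module with $i\in\{0,1\}$. By rationality the irreducible untwisted modules are exactly the $V_{\lambda_i+M}$, and by $\theta$-rationality the irreducible $\theta$-twisted modules are exactly the $V_M^{T_\chi}$. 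Thus it suffices to restrict each of these to $V_M^+$ and decompose it into irreducibles.

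First I would analyze the action of $\theta$ on the untwisted modules. The $-1$ isometry of $M$ sends $V_{\lambda+M}$ to $V_{-\lambda+M}$, so $\theta$ permutes the irreducible $V_M$-modules by $\lambda+M\mapsto -\lambda+M$. If $2\lambda_i\notin M$ then $V_{\lambda_i+M}$ and $V_{-\lambda_i+M}$ are inequivalent $V_M$-modules, and an elementary Schur-type argument shows that $V_{\lambda_i+M}$ stays irreducible upon restriction to $V_M^+$, with $V_{\lambda_i+M}$ and $V_{-\lambda_i+M}$ restricting to the same $V_M^+$-module. If $2\lambda_i\in M$ then $\theta$ lifts to an involution of $V_{\lambda_i+M}$, and the $\pm1$-eigenspace decomposition $V_{\lambda_i+M}=V_{\lambda_i+M}^+\oplus V_{\lambda_i+M}^-$ yields two irreducible $V_M^+$-modules. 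For the twisted sector, Remark \ref{g action} applies verbatim with $g=\theta$ and $T=2$: the grading of $V_M^{T_\chi}$ splits it as $V_M^{T_\chi,+}\oplus V_M^{T_\chi,-}$, and the remark guarantees each summand is an irreducible $V_M^+$-module. Together these produce exactly the list in the statement, and the orbifold principle shows the list is exhaustive.

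The remaining task, and the main obstacle, is the precise isomorphism classification. As $V_M$-modules one has $V_{\lambda_i+M}\cong V_{\lambda_j+M}$ if and only if $\lambda_i-\lambda_j\in M$; restriction to $V_M^+$ introduces the additional identification coming from $\theta$, namely $\lambda_i+\lambda_j\in M$, which gives the stated criterion $\lambda_i\pm\lambda_j\in M$. The hard part will be to rule out accidental isomorphisms, both between the two $\pm$ summands within a single module and across the three families $V_{\lambda_i+M}$, $V_{\lambda_i+M}^\pm$, and $V_M^{T_\chi,\pm}$. The plan here is to separate the families by invariants that the $V_M^+$-action remembers: the half-integrality of the conformal weights distinguishes the twisted modules $V_M^{T_\chi,\pm}$ from the untwisted ones, while comparison of lowest conformal weights, of the dimensions and $\theta$-eigenvalue structure of the top levels, and of the graded characters restricted to $V_M^+$ distinguishes the remaining cases and pins down the exact condition $\lambda_i\pm\lambda_j\in M$. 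I expect the bookkeeping for the top-level data and for the characters $\chi$ of $M/2M$ to be the most delicate point, whereas the irreducibility of each summand follows directly from Remark \ref{g action} together with the rationality and $\theta$-rationality of $V_M$.
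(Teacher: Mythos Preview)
The paper does not give a proof of this theorem; it is quoted from \cite{AD,DN} and stated without argument. So there is no proof in the paper to compare against.

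Your proposed route via the general orbifold principle of \cite{DRX} is a genuinely different approach from the original proofs in \cite{DN,AD}, which proceed by computing enough of the Zhu algebra $A(V_M^+)$ to classify its simple modules directly. Your approach is conceptually cleaner, but be alert to a potential circularity: the main theorem of \cite{DRX} postdates \cite{AD,DN}, and some of its hypotheses (or the auxiliary facts one uses to verify them, such as rationality and $C_2$-cofiniteness of $V_M^+$) are established in the literature \emph{using} the very classification you are trying to prove. You should check that the precise statement from \cite{DRX} you invoke needs only the regularity and $\theta$-rationality of $V_M$ itself---both available from \cite{D1,D2,DLM1} independently of \cite{AD,DN}---and no a priori input about $V_M^+$.

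On the content of the argument: your production of the list and the irreducibility of each member are correct, and your plan for the isomorphism criterion $\lambda_i\pm\lambda_j\in M$ is fine. The gap you flag is real. Distinguishing the listed modules, in particular the $V_M^{T_\chi,\pm}$ for distinct characters $\chi$, cannot be done by conformal weights or graded dimensions alone, since all of these share the same $q$-character. One must use the action of specific low-weight elements of $V_M^+$ on the top components $T_\chi$, which is exactly what the Zhu-algebra computation in \cite{AD} supplies; your invariant-based plan would have to reproduce that computation in some form to be complete.
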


From now on, we fix a rank $d$ lattice $L=\mathbb{Z}\alpha_{1}+\cdots+\mathbb{Z}\alpha_{d}$
with positive definite symmetric non-degenerate bilinear form $\left\langle \cdot,\cdot\right\rangle $.
Let $M=\sqrt{2}L$. Later we will see that the 2-permutation orbifold
model we study is closely related to the rational vertex operator
algebras $V_{\sqrt{2}L}$ and $V_{\sqrt{2}L}^{+}$. We now consider
the fusion rules for the vertex operator algebras $V_{\sqrt{2}L}$
and $V_{\sqrt{2}L}^{+}$.

First we notice that the dual lattice of $\sqrt{2}L$ can be written
by $\left(\sqrt{2}L\right)^{\circ}=\left\{ \frac{\lambda}{\sqrt{2}}|\lambda\in L^{\circ}\right\} $.
Thus fusion rules for irreducible $V_{\sqrt{2}L}$-modules are given
by the following \cite{DL1}:

\begin{proposition}\label{Fusion-V_L} $N_{_{V_{\sqrt{2}L}}}\left(_{V_{\frac{\lambda}{\sqrt{2}}+\sqrt{2}L}\ V_{\frac{\mu}{\sqrt{2}}+\sqrt{2}L}}^{V_{\frac{\gamma}{\sqrt{2}}+\sqrt{2}L}}\right)=\delta_{\frac{\lambda+\mu}{\sqrt 2}+\sqrt{2}L,\frac{\gamma}{\sqrt 2}+\sqrt{2}L}$
for $\lambda,\mu$ and $\gamma\in L^{\circ}$.

\end{proposition}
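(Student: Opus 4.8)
The plan is to reduce the claim to the general fusion rule theorem for lattice vertex operator algebras established in \cite{DL1}, and then translate the module labels using the explicit description of the dual lattice recorded just above the statement.

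First I would recall the general result: for a positive definite even lattice $N$, the irreducible $V_{N}$-modules are the $V_{\beta+N}$ with $\beta$ ranging over coset representatives of $N$ in $N^{\circ}$, and the fusion rules are governed by the group law of $N^{\circ}/N$, namely
\[
N_{V_{N}}\left(_{V_{\beta_{1}+N}\ V_{\beta_{2}+N}}^{V_{\beta_{3}+N}}\right)=\delta_{\beta_{1}+\beta_{2}+N,\,\beta_{3}+N}.
\]
Before applying this I would verify that $N=\sqrt{2}L$ meets the hypotheses: since $L$ is a positive definite even lattice, the identity $\left\langle\sqrt{2}\alpha,\sqrt{2}\beta\right\rangle=2\left\langle\alpha,\beta\right\rangle$ shows that $\sqrt{2}L$ is again positive definite and even, so the theorem of \cite{DL1} applies verbatim.

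Next I would substitute the identification $\left(\sqrt{2}L\right)^{\circ}=\left\{\lambda/\sqrt{2}\mid\lambda\in L^{\circ}\right\}$ stated in the text, writing $\beta_{1}=\lambda/\sqrt{2}$, $\beta_{2}=\mu/\sqrt{2}$ and $\beta_{3}=\gamma/\sqrt{2}$ with $\lambda,\mu,\gamma\in L^{\circ}$. The coset equality $\beta_{1}+\beta_{2}+N=\beta_{3}+N$ then reads $\frac{\lambda}{\sqrt{2}}+\frac{\mu}{\sqrt{2}}+\sqrt{2}L=\frac{\gamma}{\sqrt{2}}+\sqrt{2}L$, that is $\frac{\lambda+\mu}{\sqrt{2}}+\sqrt{2}L=\frac{\gamma}{\sqrt{2}}+\sqrt{2}L$, which is precisely the Kronecker delta appearing in the statement.

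There is no serious obstacle: the mathematical content is entirely contained in the cited fusion rule computation of \cite{DL1}, and the remaining work is the bookkeeping of rewriting the additive structure on $\left(\sqrt{2}L\right)^{\circ}$ in terms of $L^{\circ}$. The only points requiring a moment of care are the verification that $\sqrt{2}L$ is even, so that the module classification and fusion rules of \cite{DL1} are available, and the observation that addition of cosets is compatible with the scaling $\lambda\mapsto\lambda/\sqrt{2}$, so that $\lambda/\sqrt{2}+\mu/\sqrt{2}=(\lambda+\mu)/\sqrt{2}$.
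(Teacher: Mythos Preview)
Your proposal is correct and matches the paper's treatment: the paper does not give a separate proof but simply cites \cite{DL1} after recording the description $(\sqrt{2}L)^{\circ}=\{\lambda/\sqrt{2}\mid\lambda\in L^{\circ}\}$, which is exactly the reduction-and-relabeling you carry out. Your added checks that $\sqrt{2}L$ is again positive definite even and that the coset arithmetic is compatible with the rescaling are the only details one needs to make the citation explicit.
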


The fusion rules for $V_{L}^{+}$ for any $L$ was obtain in \cite{ADL}. For this purpose, we need to
identify the contragredient modules of the irreducible $V_{\sqrt{2}L}^{+}$-modules
first (see Proposition 3.7 of \cite{ADL}).

\begin{proposition}\label{self-dual} Every irreducible $V_{\sqrt{2}L}^{+}$-module is self dual.
\end{proposition}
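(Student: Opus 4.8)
The plan is to show that every irreducible $V_{\sqrt{2}L}^{+}$-module $W$ is isomorphic to its contragredient $W'$ by checking self-duality case by case along the classification provided by the theorem of \cite{AD,DN} quoted above. With $M=\sqrt{2}L$, the irreducible $V_{M}^{+}$-modules fall into the families $V_{\lambda+M}$ with $2\lambda\notin M$, the $\theta$-eigenspaces $V_{\lambda+M}^{\pm}$ with $2\lambda\in M$, and the twisted pieces $V_{M}^{T_{\chi},\pm}$. First I would recall the general principle that the contragredient of an irreducible $V_{M}$-module $V_{\lambda+M}$ is $V_{-\lambda+M}$, since the pairing intertwines the module $V_{\lambda+M}$ with $V_{-\lambda+M}$; this is essentially the statement that the dual of a lattice coset module is the module indexed by the negated coset.

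For the untwisted nontwisted family, the key observation is that $\theta$ (the $-1$ isometry of $M$) exchanges $V_{\lambda+M}$ and $V_{-\lambda+M}$, and these two already coincide as $V_{M}^{+}$-modules: indeed, by the last sentence of the classification theorem, $V_{\lambda_{i}+M}\cong V_{\lambda_{j}+M}$ as $V_M^{+}$-modules precisely when $\lambda_{i}\pm\lambda_{j}\in M$, so in particular $V_{\lambda+M}\cong V_{-\lambda+M}$ as $V_{M}^{+}$-modules. Combining this with $\left(V_{\lambda+M}\right)'\cong V_{-\lambda+M}$ as $V_M$-modules (hence as $V_M^{+}$-modules after restriction) gives $V_{\lambda+M}\cong\left(V_{\lambda+M}\right)'$ when $2\lambda\notin M$. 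For the split case $2\lambda\in M$, $\theta$ is an endomorphism of $V_{\lambda+M}$ and the contragredient pairing is compatible with the $\theta$-action, so taking duals respects the $\pm1$ eigenspace decomposition; thus $\left(V_{\lambda+M}^{\pm}\right)'\cong V_{-\lambda+M}^{\pm}$, and since $2\lambda\in M$ forces $-\lambda+M=\lambda+M$ as cosets (because $2\lambda\in M$ means $-\lambda\equiv\lambda$), we obtain $V_{\lambda+M}^{\pm}\cong\left(V_{\lambda+M}^{\pm}\right)'$. The remaining twisted modules $V_{M}^{T_{\chi},\pm}$ require checking that the dual of the $\theta$-twisted module $V_{M}^{T_{\chi}}$ is again of the same form and that the $\theta$-eigenspace labels are preserved under the pairing; here one uses that $\theta$ has order $2$, so a $\theta$-twisted module is also $\theta^{-1}$-twisted, keeping us inside the same list of twisted modules, and that the characters $\chi$ of $M/2M$ are real-valued (since $M/2M$ has exponent $2$), so $T_{\chi}$ is self-dual as an $M/2M$-module.

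The main obstacle I anticipate is the twisted sector: one must verify carefully that the contragredient $\left(V_{M}^{T_{\chi}}\right)'$ is isomorphic to $V_{M}^{T_{\chi}}$ itself rather than to $V_{M}^{T_{\chi'}}$ for some other character, and then that the $+$ and $-$ eigenspaces map to themselves rather than getting swapped. The eigenspace bookkeeping is the delicate point, because the weight grading of the twisted module is shifted by the conformal weight of the twisted sector, and the contragredient reverses gradings; one has to confirm that this reversal, together with the action of $L(1)$ and $(-z^{-2})^{L(0)}$ appearing in the definition of $Y_{M'}$, does not interchange the $\theta=+1$ and $\theta=-1$ parts. I expect this to follow from the explicit realization $V_{M}^{T_{\chi}}=S\bigl(\mathfrak{h}\otimes t^{-1/2}\mathbb{C}[t^{-1}]\bigr)\otimes T_{\chi}$ together with the compatibility of $\theta$ with the contragredient pairing recorded in Remark \ref{g action}, but making the sign tracking rigorous is the step that demands the most care.
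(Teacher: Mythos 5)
For calibration: the paper does not actually prove this proposition---it is imported verbatim from Proposition 3.7 of \cite{ADL}---so your proposal is an attempt to reprove a cited result rather than to reproduce an argument in the paper. Your first case is correct and complete: $(V_{\lambda+M})'\cong V_{-\lambda+M}$ as $V_M$-modules, and $V_{-\lambda+M}\cong V_{\lambda+M}$ as $V_M^{+}$-modules by the last clause of the classification theorem, so the modules $V_{\lambda+M}$ with $2\lambda\notin M$ are self dual.

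The genuine gap is in your second case. The assertion that ``the contragredient pairing is compatible with the $\theta$-action, so taking duals respects the $\pm1$ eigenspace decomposition'' is precisely what has to be proved, and it is \emph{false} for a general even lattice: the invariant pairing on $V_{\lambda+M}$ (unique up to scalar) satisfies $\langle\theta u,\theta v\rangle=c\langle u,v\rangle$ with $c=\pm1$, and when $c=-1$ duality interchanges the eigenspaces, giving $(V_{\lambda+M}^{\pm})'\cong V_{\lambda+M}^{\mp}$. Concretely, for $M=\mathbb{Z}\beta$ with $\langle\beta,\beta\rangle=2$ (the $\mathfrak{sl}_2$ level one lattice vertex operator algebra) and $\lambda=\beta/2$, the invariant form on the two-dimensional top level of $V_{\lambda+M}$ is the $\mathfrak{sl}_2$-invariant symplectic form, hence antisymmetric, and $V_{\lambda+M}^{+}$ and $V_{\lambda+M}^{-}$ are contragredient to \emph{each other}, not to themselves. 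So self-duality here is a genuine arithmetic property of $M=\sqrt{2}L$: the adjoint of a mode $(e_{2\lambda'})_n$ with respect to the pairing carries the sign $(-1)^{\mathrm{wt}\,e_{2\lambda'}}=(-1)^{2\langle\lambda',\lambda'\rangle}$, which equals $+1$ for $M=\sqrt{2}L$ (every $\lambda'$ with $2\lambda'\in\sqrt{2}L$ has integral norm) but $-1$ in the example above; the $+1$ forces the form on the top level to be symmetric and hence $c=+1$. Your sketch never invokes the hypothesis $M=\sqrt{2}L$ in this case, and no argument ignoring it can succeed. In the twisted case the situation is the reverse of what you anticipate: the point you defer as ``delicate sign tracking'' is immediate, because the contragredient does \emph{not} reverse the $L(0)$-grading (one has $(M')_{\lambda}\cong(M_{\lambda})^{*}$), and $V_M^{T_\chi,+}$, $V_M^{T_\chi,-}$ have distinct lowest weights $d/16$ and $d/16+1/2$, so duality can never swap them; the substantive input is the one you did identify, namely that for $\sqrt{2}L$ the top level $T_\chi$ is one-dimensional with real-valued character, whence $(V_M^{T_\chi})'\cong V_M^{T_\chi}$ as $\theta$-twisted modules (for general even lattices even this fails: in the $\langle\beta,\beta\rangle=2$ example the two weight-$1/16$ twisted modules are dual to each other). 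As it stands, the proposal settles case (i), rests case (ii) on a claim that is false without the $\sqrt{2}$, and leaves case (iii) unfinished by its own admission.
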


\begin{remark} \label{def of chi} For any $\lambda \in L^\circ$, we define a character $\chi_{\lambda}$ so that  $\chi_{\lambda}\left(\sqrt{2}\alpha_{i}\right)=\left(-1\right)^{\frac{\left\langle \alpha_{i},\alpha_{i}\right\rangle }{2}+\left\langle \lambda,\alpha_{i}\right\rangle }$
for $1\le i\le d$. Then $\chi_{\mu}=\chi_{\lambda}$ if and only if $\lambda-\mu\in 2L^{\circ}.$ Thus
 $\left\{ \chi_{\lambda}|\lambda\in L^{\circ}/2L^{\circ}\right\} $
gives all different characters.

\end{remark}

Recall  the number $\pi_{\lambda,\mu}=e^{\langle \lambda, \mu\rangle \pi i}$ for $\lambda,\mu\in (\sqrt 2 L)^{\circ} \cite{ADL}.$  For any character $\chi$ of ${\sqrt 2 L}/{2\sqrt 2 L}$, $c_\chi$ was defined in  \cite{ADL} and we note that here  for any $\alpha\in L$ we have:

\[
c_{\chi}\left(\frac{\alpha}{\sqrt{2}}\right)=\left(-1\right)^{\left\langle \alpha,\alpha\right\rangle }\chi\left(\sqrt{2}\alpha\right)=\chi\left(\sqrt{2}\alpha\right).
\]

 For any $\mu\in L^{\circ}, \alpha\in L$, the character $\chi^{\left(\frac{\mu}{\sqrt{2}}\right)}$
is defined in \cite{ADL} by
\[
\chi_{\lambda}^{\left(\frac{\mu}{\sqrt{2}}\right)}\left(\sqrt 2 \alpha \right)=(-1)^{\langle \alpha,\mu\rangle }\chi_{\lambda}\left(\sqrt 2 \alpha \right)
\]
and $T_{\chi_\lambda^{\left(\frac{\mu}{\sqrt{2}}\right)}}$ is denoted by $T_{\chi_\lambda}^{\left(\frac{\mu}{\sqrt{2}}\right)}$. By the definition of $\chi_\lambda$ in Remark \ref{def of chi}, it is easy to  check that for $\alpha\in L$, $\lambda\in L^\circ$,
$\chi_{\lambda}^{\left(\frac{\alpha}{\sqrt{2}}\right)}\left(\sqrt{2}\alpha_{i}\right)=\chi_{\lambda+\alpha}$$\left(\sqrt{2}\alpha_{i}\right)$
for $1\le i\le d$ and hence $T_{\chi_{\lambda}}^{\left(\frac{\alpha}{\sqrt{2}}\right)}=T_{\chi_{\lambda+\alpha}}$
for any $\lambda\in L^\circ$ and $\alpha\in L$.

A triple $\left(\lambda,\mu,\gamma\right)\subset L^{\circ}$ is said to be an admissible
triple modulo $L$ if $p\lambda+q\mu+r\gamma\in L$ for some $p,q,r\in\left\{ \pm1\right\} $.
Now we are ready to list fusion rules of  irreducible $V_{\sqrt{2}L}^{+}$-modules
\cite{ADL}:

\begin{proposition}\label{Fusion-V_L+} 

Let $L$ be the rank $d$ lattice as before. For any irreducible $V_{\sqrt{2}L}^{+}$-modules
$M^{i}$ ($i=1,2,3$), the fusion rule of type $\left(_{M^{1}M^{2}}^{\ \ M^{3}}\right)$
is $1$ if and only if $M^{i}$ ( $i=1,2,3$ ) satisfy one of the
following conditions:

(i) $M^{1}=V_{\frac{\lambda}{\sqrt{2}}+\sqrt{2}L}$ for $\lambda\in L^{\circ}$
such that $\lambda\notin L$ and $\left(M^{2},\ M^{3}\right)$ is
one of the following pairs:

$\left(V_{\frac{\mu}{\sqrt{2}}+\sqrt{2}L},V_{\frac{\gamma}{\sqrt{2}}+\sqrt{2}L}\right)$
for $\mu,\gamma\in L^{\circ}$ such that $\mu,\gamma\not\in L$ and
$\left(\frac{\lambda}{\sqrt{2}},\frac{\mu}{\sqrt{2}},\frac{\gamma}{\sqrt{2}}\right)$
is an admissible triple modulo $\sqrt{2}L$,

$\left(V_{\frac{\mu}{\sqrt{2}}+\sqrt{2}L}^{\pm},V_{\frac{\gamma}{\sqrt{2}}+\sqrt{2}L}\right)$,
$\left(V_{\frac{\gamma}{\sqrt{2}}+\sqrt{2}L},V_{\frac{\mu}{\sqrt{2}}+\sqrt{2}L}^{\pm}\right)$for
$\mu\in L,\gamma\in L^{\circ}$ and $\left(\frac{\lambda}{\sqrt{2}},\frac{\mu}{\sqrt{2}},\frac{\gamma}{\sqrt{2}}\right)$
is an admissible triple modulo $\sqrt{2}L$,

$\left(V_{\sqrt{2}L}^{T_{\chi_{\mu},}\pm},V_{\sqrt{2}L}^{T_{\chi_{\mu}}^{\left(\frac{\lambda}{\sqrt{2}}\right)},\pm}\right)$,
$\left(V_{\sqrt{2}L}^{T_{\chi_{\mu},}\pm},V_{\sqrt{2}L}^{T_{\chi_{\mu}}^{\left(\frac{\lambda}{\sqrt{2}}\right)},\mp}\right)$for
any irreducible $\sqrt{2}L/2\sqrt{2}L$-module $T_{\chi_{\mu}}.$

(ii) $M^{1}=V_{\frac{\lambda}{\sqrt{2}}+\sqrt{2}L}^{+}$ for $\lambda\in L$
and $\left(M^{2},\ M^{3}\right)$ is one of the following pairs:

$\left(V_{\frac{\mu}{\sqrt{2}}+\sqrt{2}L},V_{\frac{\gamma}{\sqrt{2}}+\sqrt{2}L}\right)$
for $\mu,\gamma\in L^{\circ}$ such that $\mu\not\in L$ and $\left(\frac{\lambda}{\sqrt{2}},\frac{\mu}{\sqrt{2}},\frac{\gamma}{\sqrt{2}}\right)$
is an admissible triple modulo $\sqrt{2}L$,

$\left(V_{\frac{\mu}{\sqrt{2}}+\sqrt{2}L}^{\pm},V_{\frac{\gamma}{\sqrt{2}}+\sqrt{2}L}^{\pm}\right)$
for $\mu\in L$, $\gamma\in L^{\circ}$ such that $\pi_{\frac{\lambda}{\sqrt{2}},\sqrt{2}\mu}=1$
and $\left(\frac{\lambda}{\sqrt{2}},\frac{\mu}{\sqrt{2}},\frac{\gamma}{\sqrt{2}}\right)$
is an admissible triple modulo $\sqrt{2}L$,

$\left(V_{\frac{\mu}{\sqrt{2}}+\sqrt{2}L}^{\pm},V_{\frac{\gamma}{\sqrt{2}}+\sqrt{2}L}^{\mp}\right)$
for $\mu\in L$, $\gamma\in L^{\circ}$ such that $\pi_{\frac{\lambda}{\sqrt{2}},\sqrt{2}\mu}=-1$
and $\left(\frac{\lambda}{\sqrt{2}},\frac{\mu}{\sqrt{2}},\frac{\gamma}{\sqrt{2}}\right)$
is an admissible triple modulo $\sqrt{2}L$,

$\left(V_{\sqrt{2}L}^{T_{\chi_{\mu},}\pm},V_{\sqrt{2}L}^{T_{\chi_{\mu+\lambda}},\pm}\right)$,
$\left(V_{\sqrt{2}L}^{T_{\chi_{\mu+\lambda}},\pm},V_{\sqrt{2}L}^{T_{\chi_{\mu},}\pm}\right)$
for any irreducible $\sqrt{2}L/2\sqrt{2}L$-module $T_{\chi_{\mu}}$
such that $\chi_{\mu}\left(\sqrt{2}\lambda\right)=1,$

$\left(V_{\sqrt{2}L}^{T_{\chi_{\mu},}\pm},V_{\sqrt{2}L}^{T_{\chi_{\mu+\lambda}},\mp}\right)$,
$\left(V_{\sqrt{2}L}^{T_{\chi_{\mu+\lambda}},\mp},V_{\sqrt{2}L}^{T_{\chi_{\mu},}\pm}\right)$
for any irreducible $\sqrt{2}L/2\sqrt{2}L$-module $T_{\chi_{\mu}}$
such that $\chi_{\mu}\left(\sqrt{2}\lambda\right)=-1.$

(iii) $M^{1}=V_{\frac{\lambda}{\sqrt{2}}+\sqrt{2}L}^{-}$ for $\lambda\in L$
and $\left(M^{2},\ M^{3}\right)$ is one of the following pairs:

$\left(V_{\frac{\mu}{\sqrt{2}}+\sqrt{2}L},V_{\frac{\gamma}{\sqrt{2}}+\sqrt{2}L}\right)$
for $\mu,\gamma\in L^{\circ}$ such that $\mu\not\in L$ and $\left(\frac{\lambda}{\sqrt{2}},\frac{\mu}{\sqrt{2}},\frac{\gamma}{\sqrt{2}}\right)$
is an admissible triple modulo $\sqrt{2}L$,

$\left(V_{\frac{\mu}{\sqrt{2}}+\sqrt{2}L}^{\pm},V_{\frac{\gamma}{\sqrt{2}}+\sqrt{2}L}^{\mp}\right)$
for $\mu\in L$, $\gamma\in L^{\circ}$ such that $\pi_{\frac{\lambda}{\sqrt{2}},\sqrt{2}\mu}=1$
and $\left(\frac{\lambda}{\sqrt{2}},\frac{\mu}{\sqrt{2}},\frac{\gamma}{\sqrt{2}}\right)$
is an admissible triple modulo $\sqrt{2}L$,

$\left(V_{\frac{\mu}{\sqrt{2}}+\sqrt{2}L}^{\pm},V_{\frac{\gamma}{\sqrt{2}}+\sqrt{2}L}^{\pm}\right)$
for $\mu\in L$, $\gamma\in L^{\circ}$ such that $\pi_{\frac{\lambda}{\sqrt{2}},\sqrt{2}\mu}=-1$
and $\left(\frac{\lambda}{\sqrt{2}},\frac{\mu}{\sqrt{2}},\frac{\gamma}{\sqrt{2}}\right)$
is an admissible triple modulo $\sqrt{2}L$,

$\left(V_{\sqrt{2}L}^{T_{\chi_{\mu},}\pm},V_{\sqrt{2}L}^{T_{\chi_{\mu+\lambda}},\mp}\right)$,
$\left(V_{\sqrt{2}L}^{T_{\chi_{\mu+\lambda}},\mp},V_{\sqrt{2}L}^{T_{\chi_{\mu},}\pm}\right)$for
any irreducible $\sqrt{2}L/2\sqrt{2}L$-module $T_{\chi_{\mu}}$ such
that $\chi_{\mu}\left(\sqrt{2}\lambda\right)=1,$

$\left(V_{\sqrt{2}L}^{T_{\chi_{\mu},}\pm},V_{\sqrt{2}L}^{T_{\chi_{\mu+\lambda}},\pm}\right)$,
$\left(V_{\sqrt{2}L}^{T_{\chi_{\mu+\lambda}},\pm},V_{\sqrt{2}L}^{T_{\chi_{\mu},}\pm}\right)$for
any irreducible $\sqrt{2}L/2\sqrt{2}L$-module $T_{\chi_{\mu}}$ such
that $\chi_{\mu}\left(\sqrt{2}\lambda\right)=-1.$

(iv) $M^{1}=V_{\sqrt{2}L}^{T_{\chi_{\mu}},+}$ for an irreducible
$\sqrt{2}L/2\sqrt{2}L$-module $T_{\chi_{\mu}}$, and $\left(M^{2},M^{3}\right)$
is one of the following pairs:

$\left(V_{\frac{\lambda}{\sqrt{2}}+\sqrt{2}L},V_{\sqrt{2}L}^{T_{\chi_{\mu}}^{\left(\frac{\lambda}{\sqrt{2}}\right)},\pm}\right)$
, $\left(V_{\sqrt{2}L}^{T_{\chi_{\mu}}^{\left(\frac{\lambda}{\sqrt{2}}\right)},\pm},V_{\frac{\lambda}{\sqrt{2}}+\sqrt{2}L}\right)$for
$\lambda\in L^{\circ}$ such that $\lambda\notin L$,

$\left(V_{\frac{\lambda}{\sqrt{2}}+\sqrt{2}L}^{\pm},V_{\sqrt{2}L}^{T_{\chi_{\mu+\lambda}},\pm}\right)$
, $\left(V_{\sqrt{2}L}^{T_{\chi_{\mu+\lambda}},\pm},V_{\frac{\lambda}{\sqrt{2}}+\sqrt{2}L}^{\pm}\right)$
for $\lambda\in L$ such that $\chi_{\mu}\left(\sqrt{2}\lambda\right)=1$,

$\left(V_{\frac{\lambda}{\sqrt{2}}+\sqrt{2}L}^{\pm},V_{\sqrt{2}L}^{T_{\chi_{\mu+\lambda}},\mp}\right)$
, $\left(V_{\sqrt{2}L}^{T_{\chi_{\mu+\lambda}},\mp},V_{\frac{\lambda}{\sqrt{2}}+\sqrt{2}L}^{\pm}\right)$for
$\lambda\in L$ such that $\chi_{\mu}\left(\sqrt{2}\lambda\right)=-1$.

(v) $M^{1}=V_{\sqrt{2}L}^{T_{\chi_{\mu}},-}$ for an irreducible $\sqrt{2}L/2\sqrt{2}L$-module
$T_{\chi_{\mu}}$, and $\left(M^{2},M^{3}\right)$ is one of the following
pairs:

$\left(V_{\frac{\lambda}{\sqrt{2}}+\sqrt{2}L},V_{\sqrt{2}L}^{T_{\chi_{\mu}}^{\left(\frac{\lambda}{\sqrt{2}}\right)},\pm}\right)$
, $\left(V_{\sqrt{2}L}^{T_{\chi_{\mu}}^{\left(\frac{\lambda}{\sqrt{2}}\right)},\pm},V_{\frac{\lambda}{\sqrt{2}}+\sqrt{2}L}\right)$
for $\lambda\in L^{\circ}$ such that $\lambda\notin L$,

$\left(V_{\frac{\lambda}{\sqrt{2}}+\sqrt{2}L}^{\pm},V_{\sqrt{2}L}^{T_{\chi_{\mu+\lambda}},\mp}\right)$,
$\left(V_{\sqrt{2}L}^{T_{\chi_{\mu+\lambda}},\mp},V_{\frac{\lambda}{\sqrt{2}}+\sqrt{2}L}^{\pm}\right)$
for $\lambda\in L$ such that $\chi_{\mu}\left(\sqrt{2}\lambda\right)=1$,

$\left(V_{\frac{\lambda}{\sqrt{2}}+\sqrt{2}L}^{\pm},V_{\sqrt{2}L}^{T_{\chi_{\mu+\lambda}},\pm}\right)$,
$\left(V_{\sqrt{2}L}^{T_{\chi_{\mu+\lambda}},\pm},\ V_{\frac{\lambda}{\sqrt{2}}+\sqrt{2}L}^{\pm}\right)$for
$\lambda\in L$ such that $\chi_{\mu}\left(\sqrt{2}\lambda\right)=-1$.

\end{proposition}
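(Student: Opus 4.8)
The plan is to obtain the proposition as a specialization of the general fusion-rule theorem for $V_N^+$ established in \cite{ADL}, applied to the even lattice $N=\sqrt 2 L$. The first step is to check that $\sqrt 2 L$ satisfies the hypotheses of that theorem and that its module data match the notation fixed above. Since the form on $L$ is integer valued, $\langle\sqrt 2\alpha,\sqrt 2\beta\rangle=2\langle\alpha,\beta\rangle\in 2\Z$, so $\sqrt 2 L$ is an even lattice of the doubled type for which the $\theta$-twisted construction and the classification of Section 3 apply; its dual is $(\sqrt 2 L)^\circ=\{\lambda/\sqrt 2\mid\lambda\in L^\circ\}$, and the complete list of irreducible $V_{\sqrt 2 L}^+$-modules recalled above is exactly $V_{\frac{\lambda}{\sqrt 2}+\sqrt 2 L}$ for $\lambda\notin L$, $V_{\frac{\lambda}{\sqrt 2}+\sqrt 2 L}^{\pm}$ for $\lambda\in L$, and $V_{\sqrt 2 L}^{T_{\chi_\mu},\pm}$. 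Thus every irreducible module of the statement occurs in \cite{ADL}, and it remains to rewrite the conditions of that theorem in the present notation.

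The second step is to fix the translation dictionary between the intrinsic $\sqrt 2 L$-data used in \cite{ADL} and the $L$-data used here. Using Remark \ref{def of chi} and the displayed formulas preceding the statement, one has $c_\chi(\alpha/\sqrt 2)=\chi(\sqrt 2\alpha)$ for $\alpha\in L$, the identification $T_{\chi_\lambda}^{(\alpha/\sqrt 2)}=T_{\chi_{\lambda+\alpha}}$ for $\lambda\in L^\circ$ and $\alpha\in L$, and $\pi_{\frac{\lambda}{\sqrt 2},\sqrt 2\mu}=e^{\langle\lambda,\mu\rangle\pi i}=(-1)^{\langle\lambda,\mu\rangle}$ for $\lambda,\mu\in L$. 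These identities convert the abstract parity and character data of \cite{ADL} into the explicit conditions $\pi_{\frac{\lambda}{\sqrt 2},\sqrt 2\mu}=\pm 1$ and $\chi_\mu(\sqrt 2\lambda)=\pm 1$ appearing in cases (ii)--(v), and they identify the twisted modules occurring in the listed fusion products with the $T_{\chi_{\mu+\lambda}}$ and $T_{\chi_\mu}^{\left(\frac{\lambda}{\sqrt 2}\right)}$ written there.

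The third step is the case analysis, organized by the type of $M^1$. For the purely untwisted fusions among $V_{\frac{\lambda}{\sqrt 2}+\sqrt 2 L}$ and $V_{\frac{\lambda}{\sqrt 2}+\sqrt 2 L}^{\pm}$, the relevant intertwining operators are restrictions of those for $V_{\sqrt 2 L}$, so the charge-conservation $\delta$ of Proposition \ref{Fusion-V_L} becomes the requirement that $\left(\frac{\lambda}{\sqrt 2},\frac{\mu}{\sqrt 2},\frac{\gamma}{\sqrt 2}\right)$ be an admissible triple modulo $\sqrt 2 L$; here the appearance of the signs $p,q,r$ reflects the isomorphism $V_{\frac{\lambda}{\sqrt 2}+\sqrt 2 L}\cong V_{-\frac{\lambda}{\sqrt 2}+\sqrt 2 L}$ of $V_{\sqrt 2 L}^+$-modules, and the splitting into $\theta$-eigenspaces produces the $\pm$ superscripts whose correlation ($\pm$ versus $\mp$) is governed by the pairing $\pi$. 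The fusions involving $V_{\sqrt 2 L}^{T_{\chi_\mu},\pm}$ come from the \cite{ADL} analysis of intertwining operators among $\theta$-twisted modules, where the sign conditions $\chi_\mu(\sqrt 2\lambda)=\pm 1$ record the action of $\theta$ on the fusion product. Throughout, the self-duality of every irreducible $V_{\sqrt 2 L}^+$-module (Proposition \ref{self-dual}) together with the symmetry of fusion rules (Proposition \ref{fusion rule symmmetry property}) is used both to symmetrize the listed $(M^2,M^3)$ pairs and to reduce the number of triples that must be verified independently.

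The main obstacle is not conceptual but the sign bookkeeping in the twisted sectors: one must match each $\pm/\mp$ superscript and each character condition $\chi_\mu(\sqrt 2\lambda)=\pm 1$ precisely to its counterpart in \cite{ADL}, verify that the admissible-triple condition modulo $\sqrt 2 L$ is equivalent to the charge condition there, and confirm that every triple not on the list has fusion rule $0$, so that the dimension is genuinely $1$ in exactly the stated cases.
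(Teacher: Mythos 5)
Your proposal is correct and matches the paper's treatment: the paper does not prove this proposition but quotes it directly from \cite{ADL}, after setting up exactly the translation dictionary you describe (the identities $c_{\chi}(\alpha/\sqrt{2})=\chi(\sqrt{2}\alpha)$, $T_{\chi_{\lambda}}^{(\alpha/\sqrt{2})}=T_{\chi_{\lambda+\alpha}}$, the pairing $\pi_{\lambda,\mu}$, self-duality of the irreducible $V_{\sqrt{2}L}^{+}$-modules, and the admissible-triple condition). Your specialization of the general $V_{N}^{+}$ fusion theorem to $N=\sqrt{2}L$ with this bookkeeping is precisely the intended argument.
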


\section{The vertex operator algebra $\left(V_{L}\otimes V_{L}\right)^{\mathbb{Z}_{2}}$}

Let $L$ be the positive definite lattice as before. We consider the rational vertex operator algebra $V_L\otimes V_L$
with the natural action of the $2$-cycle $\sigma=(1\ 2).$  We denote the fixed point vertex operator subalgebra
by  $\left(V_{L}\otimes V_{L}\right)^{\mathbb{Z}_{2}}.$

First we want to see $\left(V_{L}\otimes V_{L}\right)^{\mathbb{Z}_{2}}$ is a simple current extension of vertex operator algebra $V_{\sqrt 2 L}\otimes V_{\sqrt 2 L}^+.$ For this purpose,
we let $L^{+}=\left\{ \left(x,x\right)|x\in L\right\} $, $L^{-}=\left\{ \left(x,-x\right)|x\in L\right\} $.
Then
\begin{eqnarray*}
L\oplus L & = & \sum_{\alpha\in L}\left(\left(L^{+}+L^{-}\right)+\left(\alpha,0\right)\right)\\
 & = & \sum_{\alpha\in L}\left(L^{+}+\frac{\left(\alpha,\alpha\right)}{2}\right)\oplus\left(L^{-}+\frac{\left(\alpha,-\alpha\right)}{2}\right)
\end{eqnarray*}

For any $\alpha\in L$, let $\alpha^{1}=\left(\alpha,\alpha\right),\ \alpha^{2}=\left(\alpha,-\alpha\right).$
Then $\left\langle \alpha^{1},\alpha^{1}\right\rangle =\left\langle \alpha^{2},\alpha^{2}\right\rangle =2\left\langle \alpha,\alpha\right\rangle =\left\langle \sqrt{2}\alpha,\sqrt{2}\alpha\right\rangle $.  Note that $\sigma$ also acts on $L\oplus L$ so that
$\sigma\left(\alpha^{1}\right)=\alpha^{1}$ and $\sigma\left(\alpha^{2}\right)=-\alpha^{2}.$
Let $\mathcal S$ be a set of coset representatives of $2L$ in $L.$
Then we have decomposition
\[
V_{L\oplus L}=\sum_{\alpha\in \mathcal{S}}V_{\frac{\alpha^{1}}{2}+L^{+}}\otimes V_{\frac{\alpha^{2}}{2}+L^{-}}.
\]

It is clear that $L^{+}\cong L^{-}\cong\sqrt{2}L.$  Also,  $\sigma(\alpha^2)=\theta(\alpha^2)=-\alpha^2$ where
$\theta$ is the $-1$-isometry on $L^-$ defined before. Thus
\begin{eqnarray*}
\left(V_{L\oplus L}\right)^{\mathbb{Z}_{2}} & \cong & \sum_{\alpha\in \mathcal{S}}V_{\frac{\alpha}{\sqrt{2}}+\sqrt{2}L}\otimes V_{\frac{\alpha}{\sqrt{2}}+\sqrt{2}L.}^{+}
\end{eqnarray*}
For short, we set
\[
\mathcal{U}=\sum_{\alpha\in \mathcal{S}}V_{\frac{\alpha}{\sqrt{2}}+\sqrt{2}L}\otimes V_{\frac{\alpha}{\sqrt{2}}+\sqrt{2}L}^{+}
\]
and
 $$\mathcal{V}=V_{\sqrt{2}L}\otimes V_{\sqrt{2}L}^{+}.$$
It follows from \cite{DL1} and \ref{Fusion-V_L+} that each $V_{\frac{\alpha}{\sqrt{2}}+\sqrt{2}L}\otimes V_{\frac{\alpha}{\sqrt{2}}+\sqrt{2}L}^{+}$ is a simple current $\mathcal V$-module. In particular, $\mathcal{U}$ is a simple current
extension of $\mathcal V$. By \cite{D1, DLM1, A2, DJL}, $\mathcal V$ is rational. We also know that $\mathcal V$ is $C_2$-cofinite \cite{ABD, Ya}.
From  \cite{Y} or \cite{HKL} we have
\begin{proposition}\label{Rationality} The vertex operator algebra
$\mathcal{U}$ is rational.
\end{proposition}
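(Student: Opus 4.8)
The plan is to deduce the rationality of $\mathcal{U}$ from the general theory of simple current extensions rather than by analyzing $\mathcal{U}$-modules directly, since all the required ingredients have just been assembled above the statement. First I would record the algebraic structure precisely: $\mathcal{U}$ is an extension of $\mathcal{V}=V_{\sqrt{2}L}\otimes V_{\sqrt{2}L}^{+}$ graded by the finite abelian group $D=L/2L\cong\left(\Z/2\Z\right)^{d}$, with homogeneous pieces $\mathcal{U}^{\alpha}=V_{\frac{\alpha}{\sqrt{2}}+\sqrt{2}L}\otimes V_{\frac{\alpha}{\sqrt{2}}+\sqrt{2}L}^{+}$ indexed by the coset representatives $\alpha\in\mathcal{S}$ of $2L$ in $L$, and $\mathcal{U}^{0}=\mathcal{V}$. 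The vertex operator algebra structure on $\mathcal{U}$ needs no separate construction: it is inherited from the identification $\mathcal{U}\cong\left(V_{L\oplus L}\right)^{\Z_{2}}$, which is simple as a fixed-point subalgebra of the simple lattice vertex operator algebra $V_{L\oplus L}$, and the lattice grading descends so that $Y\left(u^{\alpha},z\right)u^{\beta}\in\mathcal{U}^{\alpha+\beta}\left(\left(z\right)\right)$. Hence $\mathcal{U}$ is a $D$-graded simple current extension of $\mathcal{V}$ in the sense of the Definition above.

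Second, I would verify the hypotheses of the rationality theorem for such extensions. By Proposition \ref{Fusion-V_L+} together with \cite{DL1}, each $\mathcal{U}^{\alpha}$ is a simple current $\mathcal{V}$-module, and $\mathcal{V}$ is both rational (by \cite{D1, DLM1, A2, DJL}) and $C_{2}$-cofinite (by \cite{ABD, Ya}); in particular the category of $\mathcal{V}$-modules is a modular tensor category to which Huang's theory applies. The rationality of $\mathcal{U}$ is then exactly the conclusion of the general result: Yamauchi \cite{Y} shows that a $D$-graded simple current extension of a rational $C_{2}$-cofinite vertex operator algebra is again rational, and the same conclusion follows from the tensor-categorical framework of \cite{HKL}, in which $\mathcal{U}$ corresponds to a commutative associative (connected \'etale) algebra in the modular tensor category of $\mathcal{V}$-modules whose category of local modules is semisimple. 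Applying either theorem to $\mathcal{V}$ and the simple currents $\left\{\mathcal{U}^{\alpha}\right\}_{\alpha\in D}$ yields the rationality of $\mathcal{U}$.

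The proposition itself is therefore a short invocation of \cite{Y} or \cite{HKL}, and the only genuine work lies in checking their hypotheses. I expect the main obstacle to be precisely this bookkeeping: confirming that $\mathcal{V}$ is simple, rational, and $C_{2}$-cofinite so that the cited machinery is available, and—most crucially—confirming the simple current property of every $\mathcal{U}^{\alpha}$, which is where Proposition \ref{Fusion-V_L+} and the full rank-$d$ fusion rules for $V_{\sqrt{2}L}^{+}$ are essential. Once these are in place the conclusion is immediate.
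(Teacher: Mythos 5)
Your proposal is correct and follows essentially the same route as the paper: realize $\mathcal{U}\cong\left(V_{L\oplus L}\right)^{\mathbb{Z}_{2}}$ as a $D$-graded simple current extension of $\mathcal{V}=V_{\sqrt{2}L}\otimes V_{\sqrt{2}L}^{+}$ (the simple current property of each piece coming from \cite{DL1} and Proposition \ref{Fusion-V_L+}), note that $\mathcal{V}$ is rational and $C_{2}$-cofinite, and invoke \cite{Y} or \cite{HKL}. Your write-up is somewhat more explicit than the paper's (naming the grading group $L/2L$ and the \'etale-algebra formulation of \cite{HKL}), but the argument is the same.
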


From the classification of irreducible modules of $\mathcal V$ \cite{D1, DN, AD}, every irreducible module has positive weight except the vertex operator algebra itself. A result from \cite{DRX} gives:
\begin{proposition} \label{classificationm} Every irreducible $\mathcal{U}$-module occurs
in an irreducible $\sigma^i$-twisted $V_L\otimes V_L$ module for $i=0, 1.$
\end{proposition}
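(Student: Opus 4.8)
The plan is to deduce this as an instance of the general orbifold theory of \cite{DRX}, applied to the vertex operator algebra $V=V_L\otimes V_L$ together with the order-two automorphism group $G=\<\sigma\>\cong\Z_2$, whose fixed-point subalgebra is precisely $\U$. The point is that, once the standing hypotheses of \cite{DRX} are verified for $V$, the statement follows immediately because $G$ has only two elements.

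First I would record that $V_L\otimes V_L$ satisfies the conditions demanded in \cite{DRX}: it is simple and of CFT type because each factor $V_L$ is; it is rational by the tensor product theorem recalled in \S2 (a tensor product of rational vertex operator algebras is rational); it is $C_2$-cofinite as a tensor product of $C_2$-cofinite algebras; and it is self-dual since $V_L$ is self-dual. With these properties in place, the pair $(V,G)$ lies squarely within the setting treated in \cite{DRX}.

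The key input from \cite{DRX} is that, for such a $V$ and any finite group $G$ of automorphisms, every irreducible $V^{G}$-module occurs as a direct summand (hence a submodule) of an irreducible $g$-twisted $V$-module for some $g\in G$. I would apply this with $V^{G}=\U$ and simply observe that $G=\<\sigma\>$ consists only of $\sigma^{0}=\mathrm{Id}$ and $\sigma^{1}=\sigma$; thus every irreducible $\U$-module appears in an irreducible $\sigma^{i}$-twisted $V_L\otimes V_L$-module for $i=0$ or $i=1$, which is exactly the claim. As a consistency check in the reverse direction, Remark~\ref{g action} already shows that each irreducible $\sigma^{i}$-twisted module splits under $\U$ into irreducible $\U$-modules indexed by its $\sigma$-eigenspace grading, so the twisted sectors do genuinely furnish candidate $\U$-modules; the content of the proposition is that they furnish all of them.

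The substantive work is carried entirely by \cite{DRX}, and I would not attempt to reprove it here: its proof rests on modular invariance of (twisted) trace functions in the sense of Zhu, non-degeneracy of the associated $S$-matrix, and a dimension-counting argument establishing that the twisted sectors exhaust the irreducible $V^{G}$-modules. The only genuine obligation on our side is the verification of the hypotheses, which is routine from the classical structure theory of lattice vertex operator algebras and from the rationality statement of Proposition~\ref{Rationality}; that verification is therefore the sole, and rather minor, obstacle.
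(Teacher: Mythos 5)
Your proposal coincides with the paper's treatment: the paper also obtains this proposition purely as a citation of the general orbifold theorem of \cite{DRX}, applied to $V=V_L\otimes V_L$ with $G=\langle\sigma\rangle$ and $V^G=\mathcal{U}$, with no argument beyond noting the hypotheses hold. The one nuance is that \cite{DRX} additionally requires $\sigma$-rationality of $V_L\otimes V_L$ and positivity of the conformal weights of all irreducible (twisted) modules other than the algebra itself --- the paper records the positivity point, via the $\mathcal{V}$-module classification, in the sentence immediately preceding the proposition --- but both verifications are routine for positive definite even lattices (the twisted side via \cite{BDM}), so your proof stands.
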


As far as representation theory concerns, it remains to compute the fusion rules for $\mathcal{U}.$
But it is not so easy to achieve this goal with the irreducible modules given abstractly in \cite{DRX}.
On the other hand   $\mathcal{U}$ is a simple current extension of $V_{\sqrt 2 L}\otimes V_{\sqrt 2 L}^+$
and we know the fusion rules for both $V_{\sqrt 2 L}$ and $V_{\sqrt 2 L}^+,$ it is natural to use these
results to determine the fusion rules for $\mathcal{U}.$ In the rest of this section, we will realize
each irreducible $\mathcal{U}$-module as a direct sum of irreducible $\mathcal V$-modules.

Recall  from \cite{D1} that all irreducible $V_{L}$-modules are given by $V_{L+\lambda}$,
$\lambda\in L^{\circ}$. Let $\mathcal T=\{\lambda_0=0, \lambda_1, \lambda_2, \cdots \}$ be a complete set of representatives of $L$ in $L^\circ$.  Assume that $|\mathcal T|=\left|L^{\circ}/L\right|=$$l$.
Then there are exactly $l$ inequivalent irreducible $V_{L}$-modules.

For any $\lambda,\mu\in L^{\circ}$, $V_{\lambda+L}\otimes V_{\mu+L}$
is an irreducible  $V_{L}\otimes V_{L}$-module. If $\lambda+L\not=\mu+L$, then $V_{\lambda+L}\otimes V_{\mu+L}$
and $V_{\mu+L}\otimes V_{\lambda+L}$ are isomorphic irreducible $\left(V_{L}\otimes V_{L}\right)^{\mathbb{Z}_{2}}$-modules
\cite{DM,DY}. The number of such isomorphism classes of irreducible
$\left(V_{L}\otimes V_{L}\right)^{\mathbb{Z}_{2}}$-modules is $\frac{l^{2}-l}{2}$.

When $\lambda=\mu$, $V_{\lambda+L}\otimes V_{\mu+L}+V_{\mu+L}\otimes V_{\lambda+L}$
split into two different representations of $\left(V_{L}\otimes V_{L}\right)^{\mathbb{Z}_{2}}$
by \cite{DY}. The number of such isomorphism classes of irreducible
$\left(V_{L}\otimes V_{L}\right)^{\mathbb{Z}_{2}}$-modules is $2l$.

It is shown in \cite{BDM} that there is one-to-one correspondence
between the category of $\sigma$-twisted $V_{L}\otimes V_{L}$-modules
and the category of $V_{L}$-modules. Thus the number of isomorphism
classes of irreducible $\sigma$-twisted $V_{L}\otimes V_{L}$-module
is also $l$. From \cite{DY}, each twisted module can be decomposed
into a direct sum of two irreducible $\left(V_{L}\otimes V_{L}\right)^{\mathbb{Z}_{2}}$-modules.
The number of such isomorphism classes of irreducible $\left(V_{L}\otimes V_{L}\right)^{\mathbb{Z}_{2}}$-modules
is $2l$.

Together, we have $\frac{l^{2}+7l}{2}$ inequivalent irreducible $\left(V_{L}\otimes V_{L}\right)^{\mathbb{Z}_{2}}$-modules. The following result is immediate from \cite{DRX}.
\begin{proposition}\label{number of irres} There are exactly  $\frac{l^{2}+7l}{2}$   inequivalent irreducible $\left(V_{L}\otimes V_{L}\right)^{\mathbb{Z}_{2}}$-modules.
\end{proposition}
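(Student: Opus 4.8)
The plan is to prove the equality by sandwiching the number of irreducible $\mathcal{U}$-modules between two matching bounds. The lower bound is supplied by the explicit construction carried out above: we have exhibited three families of irreducible $\mathcal{U}$-modules, namely the $\frac{l^{2}-l}{2}$ restrictions $V_{\lambda+L}\otimes V_{\mu+L}$ with $\lambda+L\neq\mu+L$, the $2l$ summands arising from the splitting of the $\sigma$-stable modules $V_{\lambda+L}\otimes V_{\lambda+L}$, and the $2l$ summands arising from the splitting of the $l$ irreducible $\sigma$-twisted modules. First I would verify that these $\frac{l^{2}+7l}{2}$ modules are pairwise inequivalent. Modules from the untwisted and twisted sectors are separated because the conformal weights in a $\sigma$-twisted module are shifted by a fixed fractional amount determined by the twist, so the two sectors share no common irreducible $\mathcal{U}$-summand; within the untwisted sector the first family restricts to $V_{L}\otimes V_{L}$ as a sum of two non-isomorphic irreducibles while the second restricts to a single $V_{\lambda+L}\otimes V_{\lambda+L}$, and the two $\pm$-summands of a fixed $\sigma$-stable module are distinguished by the $\langle\sigma\rangle$-action of Remark~\ref{g action}.

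For the matching upper bound I would invoke Proposition~\ref{classificationm}: every irreducible $\mathcal{U}$-module occurs in some irreducible $\sigma^{i}$-twisted $V_{L}\otimes V_{L}$-module with $i\in\{0,1\}$. It therefore suffices to account for the $\mathcal{U}$-module structure of each member of these two finite lists. By Remark~\ref{g action} together with the orbifold decomposition results of \cite{DY,MT,DRX}, an irreducible $\sigma^{i}$-twisted module $M$ decomposes as a $\mathcal{U}$-module according to whether it is $\sigma$-stable: if $\sigma M\cong M$ then $M=M^{0}\oplus M^{1}$ with both summands irreducible $\mathcal{U}$-modules, while if $\sigma M\not\cong M$ then $M$ remains irreducible over $\mathcal{U}$ and $M|_{\mathcal{U}}\cong(\sigma M)|_{\mathcal{U}}$. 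Running through the $l^{2}$ untwisted modules $V_{\lambda+L}\otimes V_{\mu+L}$ (the $\sigma$-stable ones being exactly those with $\lambda+L=\mu+L$) and the $l$ twisted modules (each $\sigma$-stable, since $\langle\sigma\rangle$ is abelian) then produces precisely the three families above, so every irreducible $\mathcal{U}$-module already appears in our list and the count is at most $\frac{l^{2}+7l}{2}$.

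Combining the two bounds yields the stated equality. I expect the main obstacle to be the bookkeeping in the upper bound: one must be sure that the decomposition of the twisted and untwisted sectors produces no irreducible $\mathcal{U}$-modules beyond the listed families and no unexpected coincidences among them, which rests entirely on the precise ``splits into exactly $|\langle\sigma\rangle|=2$ pieces'' statements of \cite{DY,MT,DRX} and on the bijection between $\sigma$-twisted $V_{L}\otimes V_{L}$-modules and $V_{L}$-modules from \cite{BDM} that pins the twisted sector at exactly $l$ members. Granting those inputs the result is indeed immediate, since the construction already realizes the exact number predicted by the orbifold counting of \cite{DRX}.
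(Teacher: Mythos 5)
Your overall strategy coincides with the paper's: the paper also obtains the count by listing the three families (the $\frac{l^{2}-l}{2}$ non-diagonal untwisted restrictions, the $2l$ summands of the diagonal modules $V_{\lambda+L}\otimes V_{\lambda+L}$, and the $2l$ summands of the $l$ irreducible $\sigma$-twisted modules supplied by \cite{BDM}) and then invoking the orbifold theory of \cite{DRX} --- in effect Proposition \ref{classificationm} together with the splitting statements of \cite{DY,MT,DRX} --- to conclude the list is complete and irredundant.

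There is, however, one step you argue yourself that is genuinely wrong as stated: you separate the twisted-sector summands from the untwisted-sector summands by claiming that the conformal weights of a $\sigma$-twisted module are shifted by a fixed fractional amount, so the two sectors can share no irreducible $\mathcal{U}$-summand. By the construction of \cite{BDM}, the $\sigma$-twisted $V_{L}\otimes V_{L}$-module attached to a $V_{L}$-module $W$ of conformal weight $\lambda_W$ has weights in $\frac{\lambda_W}{2}+\frac{d}{16}+\frac{1}{2}\Z_{\geq 0}$, where $d=\mathrm{rank}(L)$ is the central charge; whenever $d\equiv 0\pmod 8$ this ``shift'' is a half-integer or an integer. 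Concretely, for $L=E_{8}$ one has $l=1$, the unique twisted module has weights $\{\frac{1}{2},1,\frac{3}{2},2,\dots\}$, and so one of its two irreducible $\mathcal{U}$-summands is graded by positive integers --- indistinguishable by weight considerations from the untwisted-sector summands, which are also integrally graded. The correct justification that summands occurring in the $\sigma$-twisted sector are never isomorphic to summands occurring in the untwisted sector (and that summands of non-isomorphic twisted modules are inequivalent) is not a weight argument but part of the general classification in \cite{DRX} (see also \cite{DY,MT}); this is exactly what the paper means by ``immediate from'' that reference. Alternatively, within this paper the separation can be read off from Proposition \ref{all modules}: the modules $\widehat{\left(\lambda\ \epsilon\right)}$ contain $\mathcal{V}$-summands of twisted type $V_{\sqrt{2}L}^{T_{\chi},\pm}$, while the untwisted-sector modules decompose entirely into untwisted-type $\mathcal{V}$-modules. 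With that substitution your two-bound argument becomes precisely the paper's proof.
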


We now realize each irreducible $\left(V_{L}\otimes V_{L}\right)^{\mathbb{Z}_{2}}$-module in terms of irreducible $\mathcal V$-modules.
\begin{proposition}\label{all modules} Let $L=\mathbb{Z}\alpha_{1}\oplus\cdots\oplus\mathbb{Z}\alpha_{d}$
be a rank $d$ positive definite even lattice. Then any irreducible $\mathcal{U}$-module has the following form
as an $V_{\sqrt 2 L}\otimes V_{\sqrt 2 L}^+$-module:

{\small{}{}(i) For  $\lambda,\mu\in L^{\circ}$ with $\lambda+L\not=\mu+L$, }{\small \par}

\textcolor{blue}{\small{}{}}{\small{}
\[
\left(\lambda\mu\right)=\sum_{\alpha\in \mathcal{S}}V_{\frac{\lambda+\mu+\alpha}{\sqrt{2}}+\sqrt{2}L}\otimes V_{\frac{\lambda-\mu+\alpha}{\sqrt{2}}+\sqrt{2}L.}
\]
}

\textcolor{black}{\small{}{}
(ii) For  $\lambda\in L^{\circ}$,
\[
\widetilde{\left(\lambda\ 0\right)}=\sum_{\alpha\in \mathcal{S}}V_{\sqrt{2}\lambda+\frac{\alpha}{\sqrt{2}}+\sqrt{2}L}\otimes V_{\frac{\alpha}{\sqrt{2}}+\sqrt{2}L}^{+},
\]
}\textcolor{blue}{\small{}{}
\[
{\color{black}\widetilde{\left(\lambda\ 1\right)}=\sum_{\alpha\in \mathcal{S}}V_{\sqrt{2}\lambda+\frac{\alpha}{\sqrt{2}}+\sqrt{2}L}\otimes V_{\frac{\alpha}{\sqrt{2}}+\sqrt{2}L}^{-}}.
\]
}

(iii){\small{}{} For $\lambda\in L^{\circ},$ }{\small \par}

{\small{}{}
\begin{eqnarray*}
\widehat{\left(\lambda\ 0\right)} & = & \sum_{\alpha\in \mathcal{S},\chi_{\lambda}\left(\sqrt{2}\alpha\right)=1}V_{\frac{\lambda+\alpha}{\sqrt{2}}+\sqrt{2}L}\otimes V_{\sqrt{2}L}^{T_{\chi_{\lambda+\alpha}},+}+\sum_{\alpha\in \mathcal{S},\chi_{\lambda}\left({\sqrt{2}\alpha}\right)=-1}V_{\frac{\lambda+\alpha}{\sqrt{2}}+\sqrt{2}L}\otimes V_{\sqrt{2}L}^{T_{\chi_{\lambda+\alpha}},-},
\end{eqnarray*}
}{\small \par}

{\small{}{}
\[
\widehat{\left(\lambda\ 1\right)}=\sum_{\alpha\in \mathcal{S},\chi_{\lambda}\left({\sqrt{2}\alpha}\right)=1}V_{\frac{\lambda+\alpha}{\sqrt{2}}+\sqrt{2}L}\otimes V_{\sqrt{2}L}^{T_{\chi_{\lambda+\alpha}, -}}+\sum_{\alpha\in \mathcal{S}, \chi_{\lambda}\left({\sqrt{2}\alpha}\right)=-1}V_{\frac{\lambda+\alpha}{\sqrt{2}}+\sqrt{2}L}\otimes V_{\sqrt{2}}^{T_{\chi_{\lambda+\alpha}},+}.
\]
}{\small \par}
\end{proposition}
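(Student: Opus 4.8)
The plan is to exploit that $\mathcal{U}=\bigoplus_{\alpha\in\mathcal{S}}J_{\alpha}$, where $J_{\alpha}=V_{\frac{\alpha}{\sqrt{2}}+\sqrt{2}L}\otimes V_{\frac{\alpha}{\sqrt{2}}+\sqrt{2}L}^{+}$, is a simple current extension of $\mathcal{V}$. By the theory of such extensions (\cite{Y,DRX}), the restriction to $\mathcal{V}$ of any irreducible $\mathcal{U}$-module $M$ is a single orbit $\{J_{\alpha}\boxtimes_{\mathcal{V}}W:\alpha\in\mathcal{S}\}$ of a seed irreducible $\mathcal{V}$-module $W$ under fusion by the simple currents, and this orbit is multiplicity-free here since the diagonal $V_{\sqrt{2}L}$-labels $\frac{\lambda+\alpha}{\sqrt{2}}+\sqrt{2}L$ are pairwise distinct as $\alpha$ runs over $\mathcal{S}$. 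So the scheme is: by Proposition~\ref{classificationm} every irreducible $\mathcal{U}$-module is a summand of an untwisted module $V_{\lambda+L}\otimes V_{\mu+L}$ or of a $\sigma$-twisted $V_{L}\otimes V_{L}$-module; for each I would exhibit one seed summand $W$ and compute its $J$-orbit, the first tensor factor via Proposition~\ref{Fusion-V_L} and the second via Proposition~\ref{Fusion-V_L+}.

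For the untwisted families (i) and (ii) I would realize the seeds concretely through the index-$2^{d}$ sublattice $L^{+}\oplus L^{-}\subset L\oplus L$ with $L^{\pm}\cong\sqrt{2}L$, under which $V_{\sqrt{2}L}\otimes V_{\sqrt{2}L}\cong V_{L^{+}}\otimes V_{L^{-}}\subset V_{L}\otimes V_{L}$ and $\sigma$ restricts to $\mathrm{id}\otimes\theta$. Writing $V_{\lambda+L}\otimes V_{\mu+L}=V_{(\lambda,\mu)+L\oplus L}$ and using that $(a,b)$ and $(a',b')$ lie in the same $L^{+}\oplus L^{-}$-coset iff $a+b\equiv a'+b'\pmod{2L}$, the coset decomposes into $2^{d}$ pieces indexed by $\alpha\in\mathcal{S}$, with diagonal label $\frac{\lambda+\mu+\alpha}{\sqrt{2}}$ and antidiagonal label $\frac{\lambda-\mu+\alpha}{\sqrt{2}}$. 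When $\lambda+L\neq\mu+L$ one has $\lambda-\mu+\alpha\notin L$, so the antidiagonal factor is not $\theta$-fixed and stays $V_{\sqrt{2}L}^{+}$-irreducible, giving $(\lambda\mu)$ as in (i). When $\lambda=\mu$ the antidiagonal label $\frac{\alpha}{\sqrt{2}}$ satisfies $\sqrt{2}\alpha\in\sqrt{2}L$, so each second factor splits into $V^{\pm}$; since $\sigma=\mathrm{id}\otimes\theta$ is a single operator acting uniformly across the summands, its $\pm1$-eigenspaces collect the $V^{+}$, resp.\ $V^{-}$, summands, producing $\widetilde{(\lambda\ 0)}$ and $\widetilde{(\lambda\ 1)}$ as in (ii).

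For the twisted case (iii) the point is that a $\sigma$-twisted $V_{L}\otimes V_{L}=V_{L^{+}\oplus L^{-}}$-module is untwisted on $L^{+}$ and $\theta$-twisted on $L^{-}$, hence restricts to $V_{\sqrt{2}L}\otimes V_{\sqrt{2}L}$ as a sum of modules $V_{\beta+\sqrt{2}L}\otimes V_{\sqrt{2}L}^{T_{\chi}}$. I would take as seed for the module attached via \cite{BDM} to $V_{\lambda+L}$ the summand $W=V_{\frac{\lambda}{\sqrt{2}}+\sqrt{2}L}\otimes V_{\sqrt{2}L}^{T_{\chi_{\lambda}},+}$. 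Fusing with $J_{\alpha}$ shifts the first factor to $V_{\frac{\lambda+\alpha}{\sqrt{2}}+\sqrt{2}L}$ by Proposition~\ref{Fusion-V_L}, while the second factor $V_{\frac{\alpha}{\sqrt{2}}+\sqrt{2}L}^{+}\boxtimes V_{\sqrt{2}L}^{T_{\chi_{\lambda}},+}$ is computed by Proposition~\ref{Fusion-V_L+}(iv): it equals $V_{\sqrt{2}L}^{T_{\chi_{\lambda+\alpha}},+}$ when $\chi_{\lambda}(\sqrt{2}\alpha)=1$ and $V_{\sqrt{2}L}^{T_{\chi_{\lambda+\alpha}},-}$ when $\chi_{\lambda}(\sqrt{2}\alpha)=-1$ (using $T_{\chi_{\lambda}}^{(\frac{\alpha}{\sqrt{2}})}=T_{\chi_{\lambda+\alpha}}$). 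This orbit is exactly $\widehat{(\lambda\ 0)}$; taking instead the seed $V_{\frac{\lambda}{\sqrt{2}}+\sqrt{2}L}\otimes V_{\sqrt{2}L}^{T_{\chi_{\lambda}},-}$ and applying Proposition~\ref{Fusion-V_L+}(v) yields $\widehat{(\lambda\ 1)}$ with the parities reversed.

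The hard part will be supplying the correct seed in the twisted case, namely showing that the $\sigma$-twisted $V_{L}\otimes V_{L}$-module associated to $V_{\lambda+L}$ restricts on the antidiagonal $\theta$-twisted factor to $V_{\sqrt{2}L}^{T_{\chi_{\lambda}}}$ with precisely the character $\chi_{\lambda}$ of Remark~\ref{def of chi}, and identifying which of its two $\mathcal{U}$-summands carries the $V^{T_{\chi_{\lambda}},+}$ seed. This is where the explicit permutation/lattice twisted-module constructions of \cite{FLM1,L,BDM,BHL} and the normalizations of $\chi_{\lambda}$ and $c_{\chi}$ from \cite{ADL} must be brought in, so that the sign governing the $\sigma$-eigenspace on the $\alpha$-summand is matched with $\chi_{\lambda}(\sqrt{2}\alpha)$ coming out of Proposition~\ref{Fusion-V_L+}; once the seed is pinned down, the orbit computations are mechanical. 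As a final consistency check I would count the modules produced, $\frac{l^{2}-l}{2}+2l+2l=\frac{l^{2}+7l}{2}$, matching Proposition~\ref{number of irres}, which confirms that the list (i)--(iii) is both complete and exhaustive.
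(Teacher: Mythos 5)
Your treatment of (i) and (ii) is essentially the paper's own argument: the coset decomposition of $(\lambda+L)\oplus(\mu+L)$ with respect to $L^{+}\oplus L^{-}$, the observation that $2\bigl(\frac{\lambda-\mu}{\sqrt{2}}\bigr)\notin\sqrt{2}L$ keeps the second factor $V_{\sqrt{2}L}^{+}$-irreducible when $\lambda+L\neq\mu+L$, and the splitting $V_{\frac{\alpha}{\sqrt{2}}+\sqrt{2}L}\cong V_{\frac{\alpha}{\sqrt{2}}+\sqrt{2}L}^{+}\oplus V_{\frac{\alpha}{\sqrt{2}}+\sqrt{2}L}^{-}$ when $\lambda=\mu$. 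The final counting against Proposition \ref{number of irres} is also how the paper closes the argument.

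The genuine gap is in (iii), at exactly the point you flag as ``the hard part'' and then do not carry out: showing that the $\sigma$-twisted module attached to $V_{\lambda+L}$ contains the seed $V_{\frac{\lambda}{\sqrt{2}}+\sqrt{2}L}\otimes V_{\sqrt{2}L}^{T_{\chi_{\lambda}},+}$ with the character $\chi_{\lambda}$ of Remark \ref{def of chi} matched to the same $\lambda$. Deferring this to the explicit constructions of \cite{FLM1,L,BDM,BHL} and the normalizations of \cite{ADL} is not a proof; it is precisely the computation the paper declares ``difficult'' and deliberately avoids. The paper's route is different and is the key idea you are missing: it only invokes \cite{D2} to say that the twisted-sector $\mathcal{U}$-module contains \emph{some} $\mathcal{V}$-irreducible $W=V_{\frac{\lambda}{\sqrt{2}}+\sqrt{2}L}\otimes V_{\sqrt{2}L}^{T_{\chi_{\mu}},\pm}$ with $\lambda$ and $\mu$ a priori unrelated, computes the simple-current orbit $\mathcal{U}\cdot W$ by the same fusion rules you use, and then exploits the fact that an irreducible (hence $\mathbb{C}$-graded with weights in a single coset $r+\mathbb{Z}$) $\mathcal{U}$-module cannot have constituents whose conformal weights differ by non-integers. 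Writing out the weights of the constituents $V_{\frac{\lambda+\alpha}{\sqrt{2}}+\sqrt{2}L}\otimes V_{\sqrt{2}L}^{T_{\chi_{\mu+\alpha}},\pm}$ gives the parity conditions $\langle\lambda,\alpha\rangle+\frac{\langle\alpha,\alpha\rangle}{2}\in 2\mathbb{Z}$ or $2\mathbb{Z}+1$ according to the sign of $\chi_{\mu}(\sqrt{2}\alpha)$, which together force $\langle\lambda-\mu,\alpha\rangle\in 2\mathbb{Z}$ for all $\alpha$, i.e.\ $\chi_{\lambda}=\chi_{\mu}$. So the character matching is an output of a weight-integrality constraint, not an input from the twisted-module construction; no normalization of $\chi_{\lambda}$ or $c_{\chi}$ against \cite{FLM1,L,BDM,BHL} is ever needed. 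To complete your proof you would either have to actually perform the seed identification you postpone (nontrivial, construction-dependent), or adopt this integrality argument, which renders the seed identification unnecessary.
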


\begin{proof} (i) Let $\lambda,\mu\in L^{\circ},$
with $\lambda+L\not=\mu+L$, then

\begin{align*}
 & \left(\lambda+L\right)\oplus\left(\mu+L\right)\\
 & =\left(\lambda,\mu\right)+\sum_{\alpha\in \mathcal{S}}\left(\frac{\left(\alpha,\alpha\right)}{2}+L^{+}\right)\oplus\left(\frac{\left(\alpha,-\alpha\right)}{2}+L^{-}\right)\\
 & =\sum_{\alpha\in \mathcal{S}}\left(\frac{\lambda+\mu}{\sqrt{2}}+\frac{\alpha}{\sqrt{2}}+\sqrt{2}L\right)\oplus\left(\frac{\lambda-\mu}{\sqrt{2}}+\frac{\alpha}{\sqrt{2}}+\sqrt{2}L\right)
\end{align*}

Notice that here $2\left(\frac{\lambda-\mu}{\sqrt{2}}\right)\notin\sqrt{2}L$.
Thus we obtain the following decomposition
\[
V_{\lambda+L}\otimes V_{\mu+L}\cong\sum_{\alpha\in \mathcal{S}}V_{\frac{\lambda+\mu}{\sqrt{2}}+\frac{\alpha}{\sqrt{2}}+\sqrt{2}L}\otimes V_{\frac{\lambda-\mu}{\sqrt{2}}+\frac{\alpha}{\sqrt{2}}+\sqrt{2}L}
\]
as $V_{\sqrt 2 L}\otimes V_{\sqrt 2 L}^+$-modules. The number of such pairs $\left(\lambda, \ \mu\right)$ that give inequivalent irreducible  $\mathcal{U}$-modules
is $\frac{l^{2}-l}{2}$. We obtain  $\frac{l^{2}-l}{2}$
irreducible $\mathcal{U}$-modules in this way. Denote these modules by $\left(\lambda\ \mu\right), \lambda, \mu\in \mathcal T.$

(ii) If $\lambda=\mu$, then $V_{\frac{\lambda-\mu}{\sqrt{2}}+\frac{\alpha}{\sqrt{2}}+\sqrt{2}L}\cong V_{\frac{\alpha}{\sqrt{2}}+\sqrt{2}L}^{+}+V_{\frac{\alpha}{\sqrt{2}}+\sqrt{2}L}^{-}$
as $V_{\sqrt{2}L}^{+}$-modules. Thus we have
\begin{align*}
V_{\lambda+L}\otimes V_{\lambda+L} & =\left(\sum_{\alpha\in \mathcal{S}}V_{\sqrt{2}\lambda+\frac{\alpha}{\sqrt{2}}+\sqrt{2}L}\otimes V_{\frac{\alpha}{\sqrt{2}}+\sqrt{2}L}^{+}\right)\\
 & +\left(\sum_{\alpha\in \mathcal{S}}V_{\sqrt{2}\lambda+\frac{\alpha}{\sqrt{2}}+\sqrt{2}L}\otimes V_{\frac{\alpha}{\sqrt{2}}+\sqrt{2}L}^{-}\right)
\end{align*}
which is a sum of two irreducible $\mathcal{U}$-modules. Denote $\sum_{\alpha\in \mathcal{S}}V_{\sqrt{2}\lambda+\frac{\alpha}{\sqrt{2}}+\sqrt{2}L}\otimes V_{\frac{\alpha}{\sqrt{2}}+\sqrt{2}L}^{+}$
by $\widetilde{\left(\lambda\ 0\right)}$ and $\sum_{\alpha\in \mathcal{S}}V_{\sqrt{2}\lambda+\frac{\alpha}{\sqrt{2}}+\sqrt{2}L}\otimes V_{\frac{\alpha}{\sqrt{2}}+\sqrt{2}L}^{-}$
by $\widetilde{\left(\lambda\ 1\right)}$. Then $\widetilde{\left(\lambda\ \epsilon\right)},$
$\lambda\in \mathcal T, \epsilon=0, 1$ give all inequivalent irreducible $\mathcal U$-modules of this form. The number of such inequivalent irreducible $\mathcal{U}$-modules  is $2l.$

(iii) The proof in this case is different from the cases (i), (ii). It is
difficult to identify the irreducible $\mathcal U$-modules  from decomposition of
$\sigma$-twisted modules of $V_{L}\otimes V_{L}$ directly. Note that
$V_{\sqrt{2}L}\otimes V_{\sqrt{2}L}$ is a subalgebra of $V_{L}\otimes V_{L}$
(see the discussion before Proposition \ref{Rationality}). So any irreducible $\sigma$-twisted
$V_{L}\otimes V_{L}$-module contains an irreducible $1\otimes \theta$-twisted $V_{\sqrt{2}L}\otimes V_{\sqrt{2}L}$-module $V_{\frac{\lambda}{\sqrt{2}}+\sqrt{2}L}\otimes V_{\sqrt{2}L}^{T_{\chi_{\mu}}}$ for some $\lambda, \mu \in L^\circ$ \cite{D2}. Moreover, $V_{\frac{\lambda}{\sqrt{2}}+\sqrt{2}L}\otimes V_{\sqrt{2}L}^{T_{\chi_{\mu}}}$
is a direct sum of two irreducible  $\mathcal{V}$-modules $V_{\frac{\lambda}{\sqrt{2}}+\sqrt{2}L}\otimes V_{\sqrt{2}L}^{T_{\chi_{\mu},\pm}}.$

 Since $\mathcal{V}$ is a rational vertex operator subalgebra of $\mathcal{U}$, each irreducible
$\mathcal{U}$-module $M$ is a direct sum of irreducible $\mathcal{V}$-modules.
For an irreducible $\mathcal{V}$-module $W$, we define $\mathcal{U}\cdot W$
to be the fusion product of $\mathcal{U}$ and $W$ as $\mathcal{V}$-modules.
That is,
\[
\mathcal{U}\cdot W=\left(\sum_{\alpha\in \mathcal{S}}V_{\frac{\alpha}{\sqrt{2}}+\sqrt{2}L}\otimes V_{\frac{\alpha}{\sqrt{2}}+\sqrt{2}L}^{+}\right)\boxtimes_{\mathcal{V}}W.
\]
Since each $V_{\frac{\alpha}{\sqrt{2}}+\sqrt{2}L}\otimes V_{\frac{\alpha}{\sqrt{2}}+\sqrt{2}L}^{+}$ is a simple current, $\mathcal{U}\cdot W$ is a $\mathcal{U}$-module if and only if the weights of $\mathcal{U}\cdot W$
lies in $\mathbb{Z}+r$ for some $r\in {\mathbb C}.$

From the discussion above, we see that any irreducible $\mathcal{U}$-modules from the $\sigma$-twisted $V_{L}\otimes V_{L}$-modules has the form $\mathcal U\cdot W$ where $W=V_{\frac{\lambda}{\sqrt{2}}+\sqrt{2}L}\otimes V_{\sqrt{2}L}^{T_{\chi_{\mu},\pm}}$ and $\lambda, \mu \in L^\circ.$ First we take
$W=V_{\frac{\lambda}{\sqrt{2}}+\sqrt{2}L}\otimes V_{\sqrt{2}L}^{T_{\chi_{\mu},+}}.$
By fusion rules in Proposition
\ref{Fusion-V_L} and Proposition \ref{Fusion-V_L+}, we get
\begin{align*}
\mathcal{U}\cdot W & =\sum_{\alpha\in \mathcal{S},\chi_{\mu}\left({\sqrt{2}\alpha}\right)=1}V_{\frac{\lambda+\alpha}{\sqrt{2}}+\sqrt{2}L}\otimes V_{\sqrt{2}}^{T_{\chi_{\mu+\alpha}},+}+\sum_{\alpha\in \mathcal{S},\chi_{\mu}\left({\sqrt{2}\alpha}\right)=-1}V_{\frac{\lambda+\alpha}{\sqrt{2}}+\sqrt{2}L}\otimes V_{\sqrt{2}}^{T_{\chi_{\mu+\alpha}},-}.
\end{align*}

Thus $\mathcal{U}\cdot W$ is a $\mathcal{U}$-module only if $\left\langle \lambda,\alpha\right\rangle +\frac{\left\langle \alpha,\alpha\right\rangle }{2}\in 2\mathbb{Z}$ for each
$\alpha\in S$ that satisfies $\chi_{\mu}\left({\sqrt{2}\alpha}\right)=1$, and $\left\langle \lambda,\alpha\right\rangle +\frac{\left\langle \alpha,\alpha\right\rangle }{2}\in 2\mathbb{Z}+1$ for each $\alpha\in S$ that satisfies $\chi_{\mu}\left({\sqrt{2}\alpha}\right)=-1$. So $\lambda,\mu$ must satisfy $\left\langle \lambda-\mu,\alpha\right\rangle \in2\mathbb{Z}$.
That is, $\chi_{\lambda}\left(\sqrt{2}\alpha_{i}\right)=\chi_{\mu}\left(\sqrt{2}\alpha_{i}\right)$
for any $1\le i\le d$ and hence $\chi_\lambda$ and $\chi_{\mu}$
define the same character.  Thus
\[
\mathcal{U}\cdot W=\sum_{\alpha\in \mathcal{S},\chi_{\lambda}\left({\sqrt{2}\alpha}\right)=1}V_{\frac{\lambda+\alpha}{\sqrt{2}}+\sqrt{2}L}\otimes V_{\sqrt{2}L}^{T_{\chi_{\lambda+\alpha}},+}+\sum_{\alpha\in \mathcal{S},\chi_{\mu}\left({\sqrt{2}\alpha}\right)=-1}V_{\frac{\lambda+\alpha}{\sqrt{2}}+\sqrt{2}L}\otimes V_{\sqrt{2}L}^{T_{\chi_{\lambda+\alpha}},-}
\]
 is an irreducible $\mathcal{U}$-module.
We denote this module by $\widehat{\left(\lambda\ 0\right)}$.

We now prove that $\widehat{\left(\lambda+\beta\ 0\right)}=\widehat{\left(\lambda\ 0\right)}$ for any $\beta\in L.$ It is clear that for any $\beta\in 2L,$
\[
V_{\frac{\lambda+\beta+\alpha}{\sqrt{2}}+\sqrt{2}L}\otimes V_{\sqrt{2}L}^{T_{\chi_{\lambda+\beta+\alpha}},+}
=V_{\frac{\lambda+\alpha}{\sqrt{2}}+\sqrt{2}L}\otimes V_{\sqrt{2}L}^{T_{\chi_{\lambda+\alpha}},+}
\]
for $\alpha\in {\cal S}.$ So $\widehat{\left(\lambda+\beta\ 0\right)}=\widehat{\left(\lambda\ 0\right)}.$
If $\beta$ does not lie in $2L,$ we can assume $\beta\in \mathcal S$ as $\mathcal S$ is a coset representatives of $2L$ in $L.$
The result follows immediately from the definition of $\mathcal{U}\cdot W.$

Similarly, we can prove that for any $\lambda,\mu\in L^{\circ},$
when $W=V_{\frac{\lambda}{\sqrt{2}}+\sqrt{2}L}\otimes V_{\sqrt{2}L}^{T_{\chi_{\mu},-}}$,
\[
\mathcal{U}\cdot W=\sum_{\alpha\in \mathcal{S},\chi_{\lambda}\left({\sqrt{2}\alpha}\right)=1}V_{\frac{\lambda+\alpha}{\sqrt{2}}+\sqrt{2}L}\otimes V_{\sqrt{2}L}^{T_{\chi_{\lambda+\alpha}},-}+\sum_{\alpha\in \mathcal{S},\chi_{\mu}\left({\sqrt{2}\alpha}\right)=-1}V_{\frac{\lambda+\alpha}{\sqrt{2}}+\sqrt{2}L}\otimes V_{\sqrt{2}L}^{T_{\chi_{\lambda+\alpha}},+}
\]
is an irreducible $\mathcal{U}$-module which we denote by $\widehat{\left(\lambda\ 1\right)}.$

The number of inequivalent  irreducible $\mathcal{U}$-modules of the form $\widehat{\left(\lambda\ \epsilon\right)}$,
$\lambda\in \mathcal T$, $\epsilon=0,1$ is $2l$.

Now we have in total $\frac{l^{2}-l}{2}+2l+2l=\frac{l^{2}+7l}{2}$
irreducible $\mathcal{U}$-modules. Thus they are the inequivalent
irreducible $\mathcal{U}$-modules. \end{proof}

\begin{remark} \label{Dual of U-modules} By Proposition 3.7 in \cite{ADL},
for any irreducible $V_{\sqrt{2}L}$-module $V_{\lambda_{i}+\sqrt{2}L}$,
we have $\left(V_{\lambda_{i}+\sqrt{2}L}\right)^{'}=V_{-\lambda_{i}+\sqrt{2}L}$,
where $\lambda_{i}$ is any coset representative of $\sqrt{2}L$ in
$\left(\sqrt{2}L\right)^{\circ}$. By Proposition 3.3, it is clear
that for any $\lambda,\mu\in L^{\circ}$ with $\lambda+L\not=\mu+L$,
and $\epsilon=0,1$, we have $\left(\lambda\ \mu\right)^{'}=\left(-\lambda\ -\mu\right)$,
$\widetilde{\left(\lambda\ \epsilon\right)}^{'}=\widetilde{\left(-\lambda\ \epsilon\right)}$,
and $\widehat{\left(\lambda\ \epsilon\right)}^{'}=\widehat{\left(-\lambda\ \epsilon\right)}$.
\end{remark}

\section{The quantum dimensions }
\setcounter{equation}{0}
Quantum dimensions have been systematically studied in \cite{DXY, DRX}.
It is proved that for a rational, $C_{2}$-cofinite, self-dual vertex
operator algebra of CFT type, quantum dimensions of its irreducible
modules have nice properties that are helpful in determining fusion
products. The 2-permuation orbifold model $\left(V_{L}\otimes V_{L}\right)^{\mathbb{Z}_{2}}$
we study here satisfies all the conditions and hence we can use quantum
dimensions to determine some fusion rules. First we recall some notions
and properties about quantum dimensions.

\begin{definition} Let $g$ be an automorphism of the vertex operator
algebra $V$ with order $T$. Let $M=\oplus_{n\in\frac{1}{T}\mathbb{Z}_{+}}M_{\lambda+n}$
be a $g$-twisted $V$-module, the formal character of $M$ is defined
as

\[
\mbox{ch}_{q}M=\mbox{tr}_{M}q^{L\left(0\right)-c/24}=q^{\lambda-c/24}\sum_{n\in\frac{1}{T}\mathbb{Z}_{+}}\left(\dim M_{\lambda+n}\right)q^{n},
\]
where $\lambda$ is the conformal weight of $M$. \end{definition}

We denote the holomorphic function $\mbox{ch}_{q}M$ by $Z_{M}\left(\tau\right)$.
Here and below, $\tau$ is in the upper half plane $\mathbb{H}$ and
$q=e^{2\pi i\tau}$.

\begin{definition} \label{quantum dimension0}Let $V$ be a vertex
operator algebra and $M$ a $g$-twisted $V$-module such that $Z_{V}\left(\tau\right)$
and $Z_{M}\left(\tau\right)$ exists. The quantum dimension of $M$
over $V$ is defined as
\[
\mbox{qdim}_{V}M=\lim_{y\to0}\frac{Z_{M}\left(iy\right)}{Z_{V}\left(iy\right)},
\]
where $y$ is real and positive. \end{definition}

Assume $V$ is a rational, $C_{2}$-cofinite vertex operator algebra
of CFT type with $V\cong V'$. Let $M^{0}\cong V,\,M^{1},\,\cdots,\,M^{l}$
be all inequivalent irreducible $V$-modules. Moreover, we assume
the conformal weights $\lambda_{i}$ of $M^{i}$ are positive for
all $i>0.$ Then we have the following properties of quantum dimensions
\cite{DJX}:

\begin{proposition}\label{possible values of quantum dimensions}
$q\dim_{V}M^{i}\geq1,$ $\forall i=0,\cdots,l.$

\end{proposition}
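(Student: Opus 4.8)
The plan is to evaluate $\mbox{qdim}_{V}M^{i}$ explicitly via the modular $S$-transformation and then to exploit the fusion ring together with self-duality of the modules. First I would invoke the modular invariance of the graded characters: since $V$ is rational and $C_{2}$-cofinite of CFT type, the functions $Z_{M^{0}},\dots,Z_{M^{l}}$ span an $SL_{2}(\Z)$-module, so there is a constant matrix $\left(S_{ij}\right)$ with
\[
Z_{M^{i}}\!\left(-1/\tau\right)=\sum_{j=0}^{l}S_{ij}\,Z_{M^{j}}(\tau).
\]
Setting $\tau=i/y$ gives $-1/\tau=iy$, hence $Z_{M^{i}}(iy)=\sum_{j}S_{ij}Z_{M^{j}}(i/y)$. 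As $y\to 0^{+}$ the point $i/y$ tends to the cusp, so $q=e^{-2\pi/y}\to 0$ and each $Z_{M^{j}}(i/y)$ is asymptotic to $\left(\dim M^{j}_{\lambda_{j}}\right)q^{\lambda_{j}-c/24}$. Because $V$ is of CFT type with $\dim V_{0}=1$ while every other irreducible module has conformal weight $\lambda_{j}>0$, the exponent $-c/24$ attained by $M^{0}\cong V$ is strictly the smallest; thus $Z_{V}(i/y)$ dominates every $Z_{M^{j}}(i/y)$.

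Dividing numerator and denominator of $Z_{M^{i}}(iy)/Z_{V}(iy)$ by this dominant term and letting $y\to 0^{+}$ isolates the $j=0$ contribution on both sides, yielding
\[
\mbox{qdim}_{V}M^{i}=\lim_{y\to 0^{+}}\frac{Z_{M^{i}}(iy)}{Z_{V}(iy)}=\frac{S_{i0}}{S_{00}}.
\]
In particular the limit exists, and since $Z_{M^{i}}(iy)/Z_{V}(iy)>0$ for every $y>0$ the value is nonnegative; moreover $\mbox{qdim}_{V}V=1$. I expect the genuine technical obstacle to lie exactly here: justifying the asymptotic dominance rigorously (uniform control of the subleading terms and the nonvanishing $S_{00}>0$), which is the heart of the quantum-dimension analysis of \cite{DJX} on which I would rely.

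Write $d_{k}=\mbox{qdim}_{V}M^{k}$. The next step is multiplicativity under fusion: by the Verlinde formula the vector $\left(d_{k}\right)$ is a common eigenvector of the fusion matrices, so $d_{i}d_{j}=\sum_{k}N_{ij}^{k}d_{k}$, i.e. $\mbox{qdim}_{V}\!\left(M^{i}\boxtimes M^{j}\right)=\mbox{qdim}_{V}M^{i}\cdot\mbox{qdim}_{V}M^{j}$. Two elementary inputs remain. First, the contragredient $\left(M^{i}\right)'$ has the same graded dimensions as $M^{i}$, hence the same character, so $d_{(i)'}=d_{i}$. Second, $V$ occurs in $M^{i}\boxtimes\left(M^{i}\right)'$: by Proposition \ref{fusion rule symmmetry property},
\[
N_{M^{i},(M^{i})'}^{V}=\dim I_{V}\!\left(_{M^{i}(M^{i})'}^{\qquad V}\right)=\dim I_{V}\!\left(_{M^{i}\,V}^{\ \ M^{i}}\right)=1\ge 1.
\]

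Finally, combining these gives
\[
\left(\mbox{qdim}_{V}M^{i}\right)^{2}=d_{i}\,d_{(i)'}=\sum_{k}N_{M^{i},(M^{i})'}^{k}d_{k}\ge N_{M^{i},(M^{i})'}^{V}\,d_{0}\ge 1,
\]
where the first inequality uses $d_{k}\ge 0$ and $N_{M^{i},(M^{i})'}^{k}\ge 0$, and the second uses $N^{V}_{M^{i},(M^{i})'}\ge 1$ together with $d_{0}=\mbox{qdim}_{V}V=1$. Since $d_{i}\ge 0$, this forces $\mbox{qdim}_{V}M^{i}\ge 1$, as claimed. The ring-theoretic final step is routine; the modular and asymptotic analysis of the first two paragraphs is where the real work lies.
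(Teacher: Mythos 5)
The paper gives no proof of this proposition at all: it, together with Propositions \ref{quantum-product} and \ref{simple current quantum dim}, is simply quoted from \cite{DJX} under the standing hypotheses (rational, $C_{2}$-cofinite, CFT type, $V\cong V'$, positive conformal weights for $i>0$). Your argument is correct and is essentially the proof in that reference: the identification $\mbox{qdim}_{V}M^{i}=S_{i0}/S_{00}$ via Zhu's modular invariance and vacuum dominance, multiplicativity via Huang's Verlinde formula, the equality $d_{(M^{i})'}=d_{i}$ from equality of characters, and then
\[
d_{i}^{2}=d_{i}\,d_{(M^{i})'}=\sum_{k}N_{M^{i},(M^{i})'}^{k}\,d_{k}\ \ge\ N_{M^{i},(M^{i})'}^{V}\,d_{0}\ \ge\ 1,
\]
where $N_{M^{i},(M^{i})'}^{V}\ge 1$ comes from Proposition \ref{fusion rule symmmetry property} and self-duality of $V$; moreover, leaning on \cite{DJX} for the two technical points you flag (existence of the limit, i.e.\ the asymptotic dominance of the vacuum character, and $S_{00}>0$) is legitimate and not circular, since those facts are established there prior to and independently of the inequality $\mbox{qdim}_{V}M^{i}\ge 1$.
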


\begin{proposition}\label{quantum-product} For any $i,\,j=0,\cdots,\,l,$
\[
q\dim_{V}\left(M^{i}\boxtimes M^{j}\right)=q\dim_{V}M^{i}\cdot q\dim_{V}M^{j}.
\]
\end{proposition}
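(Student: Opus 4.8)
The plan is to reduce the multiplicativity of quantum dimensions to the Verlinde formula, passing through the standard identification of $\mbox{qdim}_V M^i$ with a ratio of entries of the modular $S$-matrix. Since $V$ is rational, $C_2$-cofinite and of CFT type, Zhu's modular invariance theorem guarantees that the normalized characters $Z_{M^0}(\tau),\ldots,Z_{M^l}(\tau)$ span a finite-dimensional representation of $SL_2(\mathbb{Z})$; in particular there is a matrix $S=(S_{ij})$ with
\[
Z_{M^i}(-1/\tau)=\sum_{j=0}^{l}S_{ij}Z_{M^j}(\tau).
\]

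First I would establish the formula $\mbox{qdim}_V M^i = S_{i0}/S_{00}$. Using $iy=-1/(i/y)$ and the $S$-transformation, write $Z_{M^i}(iy)=\sum_{j}S_{ij}Z_{M^j}(i/y)$. As $y\to 0^{+}$ the argument $i/y\to i\infty$, so $q=e^{2\pi i(i/y)}=e^{-2\pi/y}\to 0$ and $Z_{M^j}(i/y)$ is controlled by its leading term $q^{\lambda_j-c/24}$. Because $M^0\cong V$ has conformal weight $\lambda_0=0$ while $\lambda_j>0$ for all $j>0$, the vacuum term dominates; dividing the asymptotics of $Z_{M^i}(iy)$ by those of $Z_V(iy)$ and using the CFT-type normalization $\dim V_0=1$ yields $\mbox{qdim}_V M^i=S_{i0}/S_{00}$. (Proposition \ref{possible values of quantum dimensions} already records that these limits exist and are at least $1$.)

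Next I would invoke the Verlinde formula, valid for $V$ by Huang's theorem: the fusion matrices $N_i$ with entries $(N_i)_{jk}=N_{M^i,\,M^j}^{M^k}$ are simultaneously diagonalized by $S$, and for each fixed column index $m$ the assignment $M^i\mapsto S_{im}/S_{0m}$ is a one-dimensional representation of the fusion algebra. Specializing to $m=0$ gives
\[
\frac{S_{i0}}{S_{00}}\cdot\frac{S_{j0}}{S_{00}}=\sum_{k=0}^{l}N_{M^i,\,M^j}^{M^k}\,\frac{S_{k0}}{S_{00}}.
\]

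Finally, I would combine this with additivity of the quantum dimension over direct sums: since $Z_{M\oplus N}=Z_M+Z_N$, the limit defining $\mbox{qdim}_V$ is additive, and because $M^i\boxtimes M^j\cong\bigoplus_k N_{M^i,\,M^j}^{M^k}M^k$ as $V$-modules,
\[
\mbox{qdim}_V\!\left(M^i\boxtimes M^j\right)=\sum_{k=0}^{l}N_{M^i,\,M^j}^{M^k}\,\mbox{qdim}_V M^k=\mbox{qdim}_V M^i\cdot\mbox{qdim}_V M^j,
\]
using the previous two displays. The main obstacle is the first step: justifying rigorously that the vacuum character dominates the $q\to 0$ asymptotics and hence that $\mbox{qdim}_V M^i=S_{i0}/S_{00}$. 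This rests on Zhu's modular invariance together with the positivity of the non-vacuum conformal weights and the normalization $\dim V_0=1$, rather than on any new computation; once it is in hand, the multiplicativity is a formal consequence of the Verlinde formula.
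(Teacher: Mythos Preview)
Your argument is correct and is essentially the standard proof: identify $\mbox{qdim}_V M^i$ with $S_{i0}/S_{00}$ via Zhu's modular invariance and the positivity of non-vacuum conformal weights, then apply Huang's Verlinde formula to obtain multiplicativity. Note, however, that the paper does not actually prove this proposition; it is quoted without proof from \cite{DJX}, where the argument proceeds along exactly the lines you describe.
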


\begin{proposition}\label{simple current quantum dim}A $V$-module
$M$ is a simple current if and only if $q\dim_{V}M=1$.\end{proposition}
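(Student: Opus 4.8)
The plan is to establish the two implications separately, leaning on the two preceding propositions about quantum dimensions. First I would prove the easy direction: if $M$ is a simple current, then $\qdim_V M = 1$. The idea is to exploit the multiplicativity of quantum dimensions under fusion (Proposition~5.5) together with the lower bound $\qdim_V M^i \geq 1$ (Proposition~5.4). Since $M$ is a simple current, its contragredient $M'$ is also irreducible, and the fusion product $M \boxtimes M'$ contains $V \cong M^0$ as a summand; in fact for a simple current one has $M \boxtimes M' \cong V$. Applying multiplicativity gives
\[
\qdim_V M \cdot \qdim_V M' = \qdim_V(M \boxtimes M') = \qdim_V V = 1.
\]
Because each factor on the left is at least $1$ by Proposition~5.4, both must equal $1$, so $\qdim_V M = 1$. (One should note that $\qdim_V M' = \qdim_V M$ in any case, since $M'$ has the same graded dimensions as $M$, so $Z_{M'} = Z_M$; this makes the argument even more direct.)

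For the converse, suppose $\qdim_V M = 1$; I want to show $M$ is a simple current, i.e.\ that for every irreducible $V$-module $W$ the fusion product $M \boxtimes W$ is again irreducible. The natural strategy is to write the fusion product as a sum over irreducibles,
\[
M \boxtimes W = \sum_{k} N^{M^k}_{M,W}\, M^k,
\]
take quantum dimensions, and use multiplicativity to get $\qdim_V(M \boxtimes W) = \qdim_V M \cdot \qdim_V W = \qdim_V W$. On the other hand, taking quantum dimensions of the right-hand sum gives $\sum_k N^{M^k}_{M,W}\,\qdim_V M^k$. Combining these yields
\[
\qdim_V W = \sum_{k} N^{M^k}_{M,W}\,\qdim_V M^k \geq \sum_k N^{M^k}_{M,W},
\]
where the inequality again uses $\qdim_V M^k \geq 1$. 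The main obstacle is to turn this inequality into the conclusion that exactly one coefficient $N^{M^k}_{M,W}$ equals $1$ and the rest vanish. To close the gap I would pair $M$ with $W' = M^k{}'$ appropriately and use the symmetry of fusion rules (Proposition~2.10) to produce a matching lower bound, or equivalently compute $\qdim_V(M \boxtimes W)$ from both directions and compare total multiplicities; the point is that the total fusion multiplicity $\sum_k N^{M^k}_{M,W}$ is squeezed to equal $1$, forcing irreducibility.

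The step I expect to be genuinely delicate is this second direction, specifically ruling out the possibility that several irreducibles appear each with quantum dimension exactly $1$ (which the crude inequality alone does not exclude). The clean way to handle this is to apply the first implication in reverse together with associativity: having shown $\qdim_V M = 1$ forces $M$ itself to behave multiplicatively, one computes $\qdim_V(M \boxtimes W) = \qdim_V W$ and then uses that $M \boxtimes M'$ must contain $V$ with $\qdim$ equal to $1$, so $M \boxtimes M' = V$ exactly; this shows $M$ is invertible in the fusion ring, and invertibility of an irreducible object in a rational fusion category is equivalent to being a simple current. I would therefore organize the converse around invertibility in the fusion ring rather than around a case analysis of individual multiplicities, since that is the conceptually robust formulation and avoids the combinatorial obstacle entirely.
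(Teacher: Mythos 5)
The paper itself offers no proof of this proposition: it is quoted from \cite{DJX}, so your attempt has to be measured against the argument given there. Your forward direction coincides with the standard one: $N_{M,M'}^{V}\ge 1$ (by Proposition \ref{fusion rule symmmetry property} and $V\cong V'$), so for a simple current $M\boxtimes M'\cong V$, and multiplicativity (Proposition \ref{quantum-product}) together with the bound $q\dim_{V}M^{i}\ge 1$ (Proposition \ref{possible values of quantum dimensions}) gives $q\dim_{V}M=1$. For the converse you correctly identify the delicate point --- ruling out several summands each of quantum dimension $1$ --- but you resolve it by a genuinely different route than \cite{DJX}: you prove $M\boxtimes M'=V$ by quantum-dimension counting and then invoke associativity of the fusion product (``invertible $\Rightarrow$ simple current''). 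That is valid, but associativity is exactly the braided tensor category structure on the module category, i.e.\ Huang's theorem for rational, $C_{2}$-cofinite, self-dual vertex operator algebras of CFT type; it is available under the standing hypotheses, but it is far heavier machinery than the statement needs. The proof in \cite{DJX} stays entirely inside the three quoted ingredients: if $M^{k}$ occurs in $M\boxtimes W$, then by Proposition \ref{fusion rule symmmetry property} the module $W'$ occurs in $M\boxtimes (M^{k})'$, hence $q\dim_{V}M^{k}=q\dim_{V}\bigl(M\boxtimes (M^{k})'\bigr)\ge q\dim_{V}W$; substituting this into $q\dim_{V}W=\sum_{k}N_{M,W}^{M^{k}}\,q\dim_{V}M^{k}$ yields $\sum_{k}N_{M,W}^{M^{k}}\le 1$, while multiplicativity forces $q\dim_{V}(M\boxtimes W)\ge 1$, so $M\boxtimes W\neq 0$, the total multiplicity is exactly $1$, and $M\boxtimes W$ is irreducible. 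This is precisely the ``matching lower bound via fusion symmetry'' you gestured at but did not carry out; had you completed it, no tensor-category input would be required, which is what the elementary route buys. What your route buys is the cleaner conceptual statement that quantum dimension one is equivalent to invertibility in the fusion ring.
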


\begin{remark} By Proposition \ref{Rationality} and \cite{A4} we
see that the vertex operator algebra $\left(V_{L}\otimes V_{L}\right)^{\mathbb{Z}_{2}}$
satisfies all the assumptions required in \cite{DJX} and thus we can apply these properties.
\end{remark}

We obtain quantum dimensions of all irreducible $\left(V_{L}\otimes V_{L}\right)^{\mathbb{Z}_{2}}$-modules
as follows:

\begin{proposition}\label{quantum dimension} For $\lambda,\mu\in L^{\circ}$ with $\lambda+L\not=\mu+L$,
$\epsilon=0,1$, we have

\begin{equation}
q\dim_{\mathcal{U}}\widetilde{\left(\lambda\ \epsilon\right)}=1\label{qum-dim-diag}
\end{equation}

\begin{equation}
q\dim_{\mathcal{U}}\left(\lambda\ \mu\right)=2\label{qum-dim-nondiag}
\end{equation}

\begin{equation}
q\dim_{\mathcal{U}}\widehat{\left(\lambda\ \epsilon\right)}=\sqrt{\left|L^{\circ}/L\right|}\ \label{qum-dim-twisted}
\end{equation}

\end{proposition}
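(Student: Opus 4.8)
The plan is to compute each quantum dimension directly from Definition \ref{quantum dimension0} by using the explicit decompositions of the irreducible $\mathcal{U}$-modules into irreducible $\mathcal{V}$-modules given in Proposition \ref{all modules}, combined with the known quantum dimensions of irreducible $V_{\sqrt{2}L}$- and $V_{\sqrt{2}L}^{+}$-modules over $\mathcal{V}$. Since $\mathcal{U}$ is itself a direct sum of $|\mathcal{S}|=|L/2L|$ irreducible $\mathcal{V}$-modules, namely $\mathcal{U}=\sum_{\alpha\in\mathcal{S}}V_{\frac{\alpha}{\sqrt 2}+\sqrt 2 L}\otimes V_{\frac{\alpha}{\sqrt 2}+\sqrt 2 L}^{+}$, the numerator $Z_{\mathcal{U}}(iy)$ in the definition of $\mathrm{qdim}$ over $\mathcal{V}$ is a sum of $|\mathcal{S}|$ characters. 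The key computational input is that $\mathrm{qdim}_{\mathcal{V}}(V_{\beta+\sqrt 2 L}\otimes V^{+}_{\gamma+\sqrt 2 L}) = \mathrm{qdim}_{V_{\sqrt 2 L}}(V_{\beta+\sqrt 2 L})\cdot \mathrm{qdim}_{V_{\sqrt 2 L}^{+}}(V^{+}_{\gamma+\sqrt 2 L})$, since characters of tensor product modules factor and the limit in Definition \ref{quantum dimension0} respects products. Here $\mathrm{qdim}_{V_{\sqrt 2 L}}$ of any irreducible is $1$ (lattice VOA modules are simple currents by Proposition \ref{Fusion-V_L} and Proposition \ref{simple current quantum dim}), while for $V_{\sqrt 2 L}^{+}$ the standard values are $\mathrm{qdim}=1$ for the $V^{\pm}$-type modules, $\mathrm{qdim}=2$ for the $V_{\beta+\sqrt 2 L}$-type modules with $2\beta\notin \sqrt 2 L$, and $\mathrm{qdim}=\sqrt{|(\sqrt 2 L)^{\circ}/\sqrt 2 L|}/2^{d}$-type values for the twisted $T_{\chi}$-modules; I would record these from \cite{DJX} at the start.

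The strategy for each case is then to form the ratio $\lim_{y\to 0} Z_{M}(iy)/Z_{\mathcal{U}}(iy)$ and exploit that in the $y\to 0$ limit every summand character of a fixed module type contributes the same leading asymptotics, so the ratio reduces to counting summands weighted by their $\mathcal{V}$-quantum dimensions. For \eqref{qum-dim-diag}, the module $\widetilde{(\lambda\ \epsilon)}=\sum_{\alpha\in\mathcal{S}}V_{\sqrt 2\lambda+\frac{\alpha}{\sqrt 2}+\sqrt 2 L}\otimes V^{\pm}_{\frac{\alpha}{\sqrt 2}+\sqrt 2 L}$ has exactly $|\mathcal{S}|$ summands each of $\mathcal{V}$-quantum dimension $1\cdot 1=1$, matching $\mathcal{U}$ summand-for-summand, giving ratio $1$. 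For \eqref{qum-dim-nondiag}, the module $(\lambda\ \mu)=\sum_{\alpha\in\mathcal{S}}V_{\frac{\lambda+\mu+\alpha}{\sqrt 2}+\sqrt 2 L}\otimes V_{\frac{\lambda-\mu+\alpha}{\sqrt 2}+\sqrt 2 L}$ has $|\mathcal{S}|$ summands each of $\mathcal{V}$-quantum dimension $1\cdot 2=2$ (the second tensor factor is a $V^{+}$-module of the type with $2\cdot\frac{\lambda-\mu}{\sqrt 2}\notin\sqrt 2 L$, which holds precisely because $\lambda+L\neq\mu+L$), so the ratio is $2$. For \eqref{qum-dim-twisted}, the module $\widehat{(\lambda\ \epsilon)}$ is a sum of $|\mathcal{S}|$ summands whose second tensor factors are twisted $T_{\chi}$-type $V^{+}$-modules of $\mathcal{V}$-quantum dimension equal to $\mathrm{qdim}_{V_{\sqrt 2 L}^{+}}(V^{T_{\chi},\pm}_{\sqrt 2 L})$; dividing by the $|\mathcal{S}|$ summands of $\mathcal{U}$ and simplifying the resulting expression should collapse to $\sqrt{|L^{\circ}/L|}$.

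The main obstacle is the bookkeeping in case (iii): I must pin down the exact value of $\mathrm{qdim}_{V_{\sqrt 2 L}^{+}}(V^{T_{\chi},\pm}_{\sqrt 2 L})$ and reconcile the various lattice indices $|L/2L|=2^{d}$, $|L^{\circ}/L|=l$, and $|(\sqrt 2 L)^{\circ}/\sqrt 2 L|$ so that the final ratio simplifies cleanly to $\sqrt{l}$. The known quantum dimension of a twisted $V_{\sqrt 2 L}^{+}$-module is $\tfrac{1}{2}\sqrt{|(\sqrt 2 L)^{\circ}/\sqrt 2 L|}$, and one has $|(\sqrt 2 L)^{\circ}/\sqrt 2 L| = |L^{\circ}/L|\cdot|L/2L| = l\cdot 2^{d}$ together with $|\mathcal{S}|=2^{d}$; substituting these and the factor $\tfrac12$ from the $\pm$-splitting into the ratio $|\mathcal{S}|\cdot\tfrac12\sqrt{l\,2^{d}}/|\mathcal{S}|$ then yields $\tfrac12\sqrt{l\,2^{d}}$, and I would need to track the correct power of $2$ carefully—the cleanest route is to verify the identity directly on the asymptotics rather than trust a remembered constant. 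An alternative and cleaner verification, which I would use as a cross-check, is to note that $\sum_{i}(\mathrm{qdim}_{\mathcal{U}}M^{i})^{2}$ over all irreducible $\mathcal{U}$-modules must equal the global dimension, and that the value $\sqrt{l}$ for the $4l$ twisted-type modules $\widehat{(\lambda\ \epsilon)}$ and $\widetilde{(\lambda\ \epsilon)}$ together with the values $2$ for the $\tfrac{l^{2}-l}{2}$ modules $(\lambda\ \mu)$ is forced by consistency; this sum-rule check guards against an off-by-$2^{d/2}$ error in the direct computation.
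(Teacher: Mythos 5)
Your handling of (\ref{qum-dim-diag}) and (\ref{qum-dim-nondiag}) is correct and is essentially the paper's own argument: write $q\dim_{\mathcal{U}}M=q\dim_{\mathcal{V}}M/q\dim_{\mathcal{V}}\mathcal{U}$, note that every $\mathcal{V}$-summand of $\widetilde{(\lambda\ \epsilon)}$ and of $\mathcal{U}$ is a simple current (quantum dimension $1$), and that every summand of $(\lambda\ \mu)$ has quantum dimension $1\cdot 2=2$ because $2\cdot\frac{\lambda-\mu+\alpha}{\sqrt{2}}\notin\sqrt{2}L$ when $\lambda+L\neq\mu+L$. The gap is in (\ref{qum-dim-twisted}). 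Your direct route requires the quantum dimension of $V_{\sqrt{2}L}^{T_{\chi},\pm}$ over $V_{\sqrt{2}L}^{+}$, and you quote two mutually inconsistent values for it ($\sqrt{|(\sqrt{2}L)^{\circ}/\sqrt{2}L|}/2^{d}$ in your first paragraph, $\tfrac{1}{2}\sqrt{|(\sqrt{2}L)^{\circ}/\sqrt{2}L|}$ in your last), and both are wrong: the correct value is $\sqrt{|(\sqrt{2}L)^{\circ}/\sqrt{2}L|}/2^{d/2}=\sqrt{l\cdot 2^{d}}/2^{d/2}=\sqrt{l}$, which coincides with your second guess only when $d=2$. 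With the correct value your summand count does give $q\dim_{\mathcal{V}}\widehat{(\lambda\ \epsilon)}=2^{d}\sqrt{l}$ and hence the ratio $\sqrt{l}$, but that value is precisely what remains to be established.

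You anticipated the problem ("do not trust a remembered constant") and proposed the sum rule $\sum_{i}(q\dim_{\mathcal{U}}M^{i})^{2}=\mbox{glob}(\mathcal{U})$ as arbiter; however, as stated this cannot force anything, because you never determine $\mbox{glob}(\mathcal{U})$ independently --- the sum rule is then one equation in two unknowns (the twisted quantum dimension and the global dimension). You also miscount in setting it up: the $2l$ modules $\widetilde{(\lambda\ \epsilon)}$ have quantum dimension $1$ (as you yourself proved), so only the $2l$ modules $\widehat{(\lambda\ \epsilon)}$ carry the unknown value, not "$4l$ twisted-type modules." The paper supplies exactly the missing input: by \cite{DRX}, $\mbox{glob}(V^{G})=|G|^{2}\mbox{glob}(V)$, and $\mbox{glob}(V_{L}\otimes V_{L})=l^{2}$ since all irreducible $V_{L}$-modules have quantum dimension $1$; hence $\mbox{glob}(\mathcal{U})=4l^{2}$, and the equation $\frac{l^{2}-l}{2}\cdot 2^{2}+2l\cdot 1+2l\cdot x^{2}=4l^{2}$ forces $x=\sqrt{l}$ with no knowledge of twisted $V_{\sqrt{2}L}^{+}$ quantum dimensions at all. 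To complete your proof you must either import this orbifold global-dimension formula (or an equivalent, e.g.\ $\mbox{glob}(\mathcal{U})=\mbox{glob}(\mathcal{V})/(q\dim_{\mathcal{V}}\mathcal{U})^{2}$ for the simple current extension) or genuinely compute the twisted $V_{\sqrt{2}L}^{+}$ quantum dimension; as written, (\ref{qum-dim-twisted}) is not proved.
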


\begin{proof} Using the definition of quantum dimension we see  that for any  irreducible $\mathcal{U}$-module $M$,
$$q\dim_{\mathcal{U}}M=\frac{q\dim_{\mathcal{V}}M}{q\dim_{\mathcal{V}}\mathcal{U}}.$$

For any $\alpha \in L^\circ, \alpha \in \mathcal S$, by  fusion rules of irreducible $V_{\sqrt 2 L}$- and $V_{\sqrt 2 L}^+$-modules
in Proposition \ref{Fusion-V_L} and Proposition \ref{Fusion-V_L+},
we see that $V_{\sqrt{2}\lambda+\frac{\alpha}{\sqrt{2}}+\sqrt{2}L}\otimes V_{\frac{\alpha}{\sqrt{2}}+\sqrt{2}L}^{+}$
is a simple current $\mathcal V$-module. By   Proposition \ref{quantum-product} and Proposition \ref{simple current quantum dim} ,
$V_{\sqrt{2}\lambda+\frac{\alpha}{\sqrt{2}}+\sqrt{2}L}\otimes V_{\frac{\alpha}{\sqrt{2}}+\sqrt{2}L}^{+}$
is of quantum dimension 1 as irreducible $\mathcal{V}$-module.
Thus $q\dim_{\mathcal{V}}\widetilde{\left(\lambda\ \epsilon\right)}=2^{d}$ and $q\dim_{\mathcal{V}}\mathcal{U}=2^{d}.$
Thus we obtain $q\dim_{\mathcal{U}}\widetilde{\left(\lambda\ \epsilon\right)}=1$,
$\epsilon=0,1$.

For $\alpha \in \mathcal S$, $\lambda,\mu\in L^{\circ}$ with $\lambda+L\not=\mu+L$,  we have $2\left(\frac{\lambda-\mu+\alpha}{\sqrt{2}}\right)\not\in\sqrt{2}L$. Thus by fusion rules of irreducible $V_{\sqrt{2}L}^{+}$-modules in
Proposition  \ref{Fusion-V_L+}, quantum dimension of $V_{\frac{\lambda-\mu+\alpha}{\sqrt{2}}+\sqrt{2}L}$
is $2$ as irreducible $V_{\sqrt{2}L}^{+}$-module. By Proposition \ref{quantum-product}, $$q\dim_{\mathcal{V}}(V_{\frac{\lambda+\mu+\alpha}{\sqrt{2}}+\sqrt{2}L}\otimes V_{\frac{\lambda-\mu+\alpha}{\sqrt{2}}+\sqrt{2}L})=q\dim_{\sqrt{2}L}V_{\frac{\lambda+\mu+\alpha}{\sqrt{2}}+\sqrt{2}L}\cdot q\dim_{V_{\sqrt{2}L}^{+}}V_{\frac{\lambda+\mu+\alpha}{\sqrt{2}}+\sqrt{2}L}=2.$$

Therefore we get $q\dim_{\mathcal{V}}\left(\lambda\ \mu\right)=2\cdot2^{d}=2^{d+1}$ and hence we prove (\ref{qum-dim-nondiag}).

To prove (\ref{qum-dim-twisted}), first we recall from \cite{DJX} that $\mbox{glob}\left(V\right)=\sum_{M}\left(q\dim M\right)^{2}$
where $M$ runs over all irreducible modules of $V$. By Proposition \ref{quantum-product}, $\mbox{glob}\left(V_{L}\otimes V_{L}\right)=\left(\mbox{glob}\left(V_{L}\right)\right)^{2}$. Now we have
\[
\mbox{glob}\left(V_{L}\otimes V_{L}\right)=\left(\sum_{\lambda\in \mathcal T}\left(q\dim V_{\lambda+L}\right)^{2}\right)^{2}=|\mathcal T|^2=\left|L^{\circ}/L\right|^{2}=l^{2}
\]
as $q\dim V_{\lambda+L}=1$ for any irreducible $V_{L}$-module $V_{\lambda+L}$. Set $q\dim_{\mathcal{U}}\widehat{\left(\lambda\ \epsilon\right)}=x$
, by quantum dimensions of irreducible $\mathcal{U}$-modules $\widetilde{\left(\lambda\ \epsilon\right)}$
and $\left(\lambda\ \mu\right)$ we obtain above, we have
\begin{align*}
\mbox{glob}\left(V^{G}\right) & =\frac{l^{2}-l}{2}\cdot2^{2}+2l\cdot1^{2}+2l\cdot x^{2}.
\end{align*}

It is proved in \cite{DRX} that $\mbox{glob}\left(V^{G}\right)=\left|G\right|^{2}\mbox{glob}\left(V\right).$
Therefore we get $2^{2}\cdot\frac{l^{2}-l}{2}+2l\cdot1^{2}+2l\cdot x^{2}=2^{2}\cdot l^{2}$.
Solving the equation gives $x=\sqrt l$ and thus
$q\dim_{\mathcal{U}}\widehat{\left(\lambda\ \epsilon\right)}=\sqrt{\left|L^{\circ}/L\right|}$.

\end{proof}

\section{Fusion Rules }
\setcounter{equation}{0}
In this section, we use the quantum dimensions obtained in the previous
section and the fusion rules of irreducible $V_{\sqrt{2}L}$- and
$V_{\sqrt{2}L}^{+}$-modules in \cite{DL1,ADL} to determine the fusion
products of the 2-permutation orbifold model.

\begin{theorem}

Let $L$ be as before. Let $\lambda,\mu,\gamma,\delta\in L^{\circ},$
$\epsilon,\epsilon_{1}=0,1.$

(a) (i)

\begin{equation}
\widetilde{\left(\lambda\ \epsilon\right)}\boxtimes\widetilde{\left(\gamma\ \epsilon_{1}\right)}=\widetilde{\left(\lambda+\gamma\ \epsilon+\epsilon_{1}\right)},\label{Fusion-Diag-Diag}
\end{equation}

(ii) if $\gamma+L\not=\delta+L$, then
\begin{equation}
\widetilde{\left(\lambda\ \epsilon\right)}\boxtimes\left(\gamma\ \delta\right)=\left(\lambda+\gamma\ \lambda+\delta\right),\label{Fusion-Nondiag-Diag}
\end{equation}

(iii) if $\lambda+L\not=\mu+L$ , $\gamma+L\not=\delta+L$, $\lambda+\gamma+L=\mu+\delta+L$,
and $\mu+\gamma+L=\lambda+\delta+L$, then
\begin{equation}
\left(\lambda\ \mu\right)\boxtimes\left(\gamma\ \delta\right)=\widetilde{\left(\lambda+\gamma\ 0\right)}+\widetilde{\left(\lambda+\gamma\ 1\right)}+\widetilde{\left(\mu+\gamma\ 0\right)}+\widetilde{\left(\mu+\gamma\ 1\right)},\label{Fusion-Nondiag-Nodiag}
\end{equation}

(iv) if $\lambda+L\not=\mu+L$ , $\gamma+L\not=\delta+L$, $\lambda+\gamma+L\not=\mu+\delta+L$,
and $\mu+\gamma+L=\lambda+\delta+L$, then
\begin{equation}
\left(\lambda\ \mu\right)\boxtimes\left(\gamma\ \delta\right)=\left(\lambda+\gamma\ \mu+\delta\right)+\widetilde{\left(\mu+\gamma\ 0\right)}+\widetilde{\left(\mu+\gamma\ 1\right)},\label{add}
\end{equation}

(v) if $\lambda+L\not=\mu+L$, $\gamma+L\not=\delta+L$, $\lambda+\gamma+L\not=\mu+\delta+L$
and $\mu+\gamma+L\not=\lambda+\delta+L$, then

\begin{equation}
\left(\lambda\ \mu\right)\boxtimes\left(\gamma\ \delta\right)=\left(\lambda+\gamma\ \mu+\delta\right)+\left(\mu+\gamma\ \lambda+\delta\right).\label{Fusion-Nondiag-Nondiag-Case 2}
\end{equation}

(b) If $\lambda+L\not=\mu+L$,

\begin{equation}
\left(\lambda\ \mu\right)\boxtimes\widehat{\left(\gamma\ \epsilon\right)}=\widehat{\left(\lambda+\mu+\gamma\ 0\right)}+\widehat{\left(\lambda+\mu+\gamma\ 1\right)},\epsilon=0,1,\label{Fusion-Nondiag-Twisted}
\end{equation}

\begin{equation}
\widetilde{\left(\lambda\ \epsilon\right)}\boxtimes\widehat{\left(\mu\ \epsilon_{1}\right)}=\widehat{\left(2\lambda+\mu\ \epsilon+\epsilon_{1}\right)}.\label{Fusion Prod-dia-twisted}
\end{equation}

(c) Let $G=\left\{ \gamma\in{\cal T}|2\gamma\in L\right\} $,

(i) if $\frac{\lambda+\mu}{2}\in L^{\circ},$
\begin{equation}
\widehat{\left(\lambda\ \epsilon\right)}\boxtimes\widehat{\left(\mu\ \epsilon_{1}\right)}=\sum_{\gamma\in G}\widetilde{\left(\frac{\lambda+\mu}{2}+\gamma\ \epsilon-\epsilon_{1}\right)}+\sum_{\delta\in L^{\circ},\delta\not=\frac{\mu+\lambda}{2}+\gamma,\gamma\in G}\left(\lambda+\mu-\delta\ \delta\right),\label{Fusion-Twisted-Twisted-Case 1}
\end{equation}

(ii) if $\frac{\lambda+\mu}{2}\not\in L^{\circ}$,
\begin{equation}
\widehat{\left(\lambda\ \epsilon\right)}\boxtimes\widehat{\left(\mu\ \epsilon_{1}\right)}=\sum_{\delta\in L^{\circ},\delta\not\not=\lambda+\mu-\delta}\left(\lambda+\mu-\delta\ \delta\right).\label{Fusion-Twisted-Twisted-Case 2}
\end{equation}

\end{theorem}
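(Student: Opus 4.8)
The plan is to compute every fusion product by descending to the rational subalgebra $\mathcal{V}=V_{\sqrt{2}L}\otimes V_{\sqrt{2}L}^{+}$, over which $\mathcal{U}$ is a simple current extension, and then lifting back. Two tools drive the argument. First, the multiplicativity of quantum dimension (Proposition \ref{quantum-product}), together with the explicit values of Proposition \ref{quantum dimension}, fixes the total quantum dimension of $M^{1}\boxtimes_{\mathcal{U}}M^{2}$ as the product $q\dim_{\mathcal{U}}M^{1}\cdot q\dim_{\mathcal{U}}M^{2}$, which bounds how many irreducible summands can occur. Second, since $\mathcal{U}=\bigoplus_{\alpha\in\mathcal{S}}V_{\frac{\alpha}{\sqrt{2}}+\sqrt{2}L}\otimes V_{\frac{\alpha}{\sqrt{2}}+\sqrt{2}L}^{+}$ is a simple current extension, an irreducible $\mathcal{U}$-module $N$ occurs in $M^{1}\boxtimes_{\mathcal{U}}M^{2}$ precisely when some irreducible $\mathcal{V}$-constituent of $N$ occurs in $W^{1}\boxtimes_{\mathcal{V}}W^{2}$ for irreducible $\mathcal{V}$-constituents $W^{i}$ of $M^{i}$. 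Using the $\mathcal{V}$-module decompositions of Proposition \ref{all modules}, each inner product splits as a tensor product, the first factor computed by Proposition \ref{Fusion-V_L} and the second by Proposition \ref{Fusion-V_L+}. Matching the resulting constituents against Proposition \ref{all modules} identifies the $\mathcal{U}$-modules, and the quantum-dimension count then forces the multiplicities, since each candidate decomposition is multiplicity-free.

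For parts (a)(i), (a)(ii), and both formulas of part (b) this is direct. I would fuse the $\alpha=0$ constituents, so that the $V_{\sqrt{2}L}$-factor fuses to a single $V_{\frac{\nu}{\sqrt{2}}+\sqrt{2}L}$ by Proposition \ref{Fusion-V_L} and the $V_{\sqrt{2}L}^{+}$-factor by Proposition \ref{Fusion-V_L+}. In (a)(i) both factors are simple currents, so $q\dim_{\mathcal{U}}=1$ and the product is a single diagonal module; reading off the constituent identifies it as $\widetilde{(\lambda+\gamma\ \epsilon+\epsilon_{1})}$, the index $\epsilon+\epsilon_{1}$ arising from $V_{\sqrt{2}L}^{\pm}\boxtimes V_{\sqrt{2}L}^{\pm}$. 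Parts (a)(ii) and (b) run identically, the only extra bookkeeping being the $\pm$ superscripts and, in (b), the characters $\chi_{\mu}$ together with the sign conditions $\chi_{\mu}(\sqrt{2}\lambda)=\pm1$ supplied by cases (iv),(v) of Proposition \ref{Fusion-V_L+}; the quantum dimensions $2\sqrt{l}$ and $\sqrt{l}$ then match the right-hand sides.

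Part (a)(iii)--(v) is the core of the untwisted computation, with $q\dim_{\mathcal{U}}(\lambda\ \mu)\cdot q\dim_{\mathcal{U}}(\gamma\ \delta)=4$. Fusing constituents, the second $V_{\sqrt{2}L}^{+}$-tensor factor is $V_{\frac{\lambda-\mu+\alpha}{\sqrt{2}}+\sqrt{2}L}\boxtimes V_{\frac{\gamma-\delta+\beta}{\sqrt{2}}+\sqrt{2}L}$ with both numerators outside $L$; by the admissible-triple condition in Proposition \ref{Fusion-V_L+}(i) the product is carried by the two classes $c\equiv(\lambda-\mu)+(\gamma-\delta)$ and $c\equiv(\lambda-\mu)-(\gamma-\delta)\pmod{2L}$. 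The decisive point is that $V_{\frac{c}{\sqrt{2}}+\sqrt{2}L}$ splits as $V^{+}\oplus V^{-}$ exactly when $c\in L$, so that class contributes a pair of diagonal modules $\widetilde{(\ \,0)}+\widetilde{(\ \,1)}$, whereas $c\notin L$ yields a single off-diagonal module. Since $(\lambda-\mu)+(\gamma-\delta)\in L\Leftrightarrow\lambda+\gamma+L=\mu+\delta+L$ and $(\lambda-\mu)-(\gamma-\delta)\in L\Leftrightarrow\mu+\gamma+L=\lambda+\delta+L$, the three sub-cases correspond to both, one, or neither class lying in $L$, producing $1+1+1+1$, $2+1+1$, and $2+2$ in quantum dimension; solving the linear relations on the numerators identifies the diagonal labels as $\lambda+\gamma,\mu+\gamma$ and the off-diagonal labels as $(\lambda+\gamma\ \mu+\delta),(\mu+\gamma\ \lambda+\delta)$.

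The hardest step is part (c), where $q\dim_{\mathcal{U}}\widehat{(\lambda\ \epsilon)}\cdot q\dim_{\mathcal{U}}\widehat{(\mu\ \epsilon_{1})}=l$. Both factors have $\mathcal{V}$-constituents $V_{\frac{\lambda+\alpha}{\sqrt{2}}+\sqrt{2}L}\otimes V_{\sqrt{2}L}^{T_{\chi_{\lambda+\alpha}},\pm}$, so the second tensor factor is a fusion $V_{\sqrt{2}L}^{T,\pm}\boxtimes V_{\sqrt{2}L}^{T',\pm}$ of two twisted sectors. Since Proposition \ref{Fusion-V_L+} is organized by the first argument, I would read these products off its cases (iv),(v) via the symmetry of fusion rules (Proposition \ref{fusion rule symmmetry property}) and self-duality (Proposition \ref{self-dual}); the outputs are untwisted $V_{\sqrt{2}L}^{+}$-modules, confirming that the whole product is untwisted as required. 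One then carries out the double sum over $\alpha,\beta\in\mathcal{S}$, tracking the induced characters $\chi_{\lambda+\alpha}$ and the interlocking sign conditions $\chi(\sqrt{2}\,\cdot\,)=\pm1$, and reassembles constituents. The dichotomy $\frac{\lambda+\mu}{2}\in L^{\circ}$ versus $\notin L^{\circ}$ is exactly the condition for the existence of ``diagonal'' indices $\delta$ with $2\delta\equiv\lambda+\mu\pmod{L}$: these yield the diagonal modules $\widetilde{(\frac{\lambda+\mu}{2}+\gamma\ \epsilon-\epsilon_{1})}$ with $\gamma\in G$, while the remaining $\delta$ pair up into off-diagonal modules $(\lambda+\mu-\delta\ \delta)$. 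A final count, $|G|\cdot1+\frac{l-|G|}{2}\cdot2=l$ in case (i) and $\frac{l}{2}\cdot2=l$ in case (ii), certifies completeness and that every multiplicity is one. The main obstacle throughout part (c) is precisely this combinatorial bookkeeping of the twisted-sector characters and signs, and it is here that the theory of quantum dimensions is indispensable for guaranteeing that the produced list exhausts the fusion product.
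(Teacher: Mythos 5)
Your reduction to $\mathcal{V}=V_{\sqrt{2}L}\otimes V_{\sqrt{2}L}^{+}$ rests on the biconditional principle that an irreducible $\mathcal{U}$-module $N$ occurs in $M^{1}\boxtimes_{\mathcal{U}}M^{2}$ \emph{precisely when} some fusion $W^{1}\boxtimes_{\mathcal{V}}W^{2}$ of irreducible $\mathcal{V}$-constituents contains a $\mathcal{V}$-constituent of $N$. The ``if'' direction of this principle is false, and it is the only source of lower bounds in your argument. Concretely, suppose $L$ contains $\alpha,\beta\in\mathcal{S}$ with $\left\langle \alpha,\beta\right\rangle$ odd (e.g.\ $L$ the $A_{2}$ root lattice, $\alpha=\alpha_{1}$, $\beta=\alpha_{2}$). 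Since $\pi_{\frac{\alpha}{\sqrt{2}},\sqrt{2}\beta}=(-1)^{\left\langle \alpha,\beta\right\rangle }=-1$, Proposition \ref{Fusion-V_L+}(ii) gives $V_{\frac{\alpha}{\sqrt{2}}+\sqrt{2}L}^{+}\boxtimes_{V_{\sqrt{2}L}^{+}}V_{\frac{\beta}{\sqrt{2}}+\sqrt{2}L}^{+}=V_{\frac{\alpha+\beta}{\sqrt{2}}+\sqrt{2}L}^{-}$, so the $\mathcal{V}$-fusion of the $\alpha$-constituent of $\widetilde{\left(\lambda\ 0\right)}$ with the $\beta$-constituent of $\widetilde{\left(\gamma\ 0\right)}$ is a $\mathcal{V}$-constituent of $\widetilde{\left(\lambda+\gamma\ 1\right)}$ --- which by (a)(i) (indeed already by $q\dim_{\mathcal{U}}=1$) does \emph{not} occur in $\widetilde{\left(\lambda\ 0\right)}\boxtimes_{\mathcal{U}}\widetilde{\left(\gamma\ 0\right)}$. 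Restriction of intertwining operators from $\mathcal{U}$ to $\mathcal{V}$ is injective but not surjective, so constituent-level fusions over-count. Applied as you state it (ranging over all constituent pairs), your principle would place both $\widetilde{\left(\lambda+\gamma\ 0\right)}$ and $\widetilde{\left(\lambda+\gamma\ 1\right)}$ in the product, contradicting your own quantum-dimension count; your decision to fuse only the $\alpha=0$ constituents happens to produce the correct candidate lists, but nothing in your framework justifies privileging that pair.

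What is needed are two genuinely one-sided tools, and this is exactly what the paper uses. For upper bounds: Proposition 2.9 of \cite{ADL} (injectivity of restriction), which says that for a \emph{fixed} pair of irreducible constituents $W^{i}\subset M^{i}$ one has $N_{\mathcal{U}}\left(_{M^{1}\ M^{2}}^{\ \ N}\right)\le\sum_{X}N_{\mathcal{V}}\left(_{W^{1}\ W^{2}}^{\ \ X}\right)$, the sum over $\mathcal{V}$-constituents $X$ of $N$; this universally quantified inequality, not your existential statement, is what legitimately eliminates the wrong-parity modules via the $\alpha=0$ pair. For lower bounds the paper does not use $\mathcal{V}$ at all but the algebra \emph{above} $\mathcal{U}$: the known fusion rule $\left(V_{\lambda+L}\otimes V_{\mu+L}\right)\boxtimes\left(V_{\gamma+L}\otimes V_{\delta+L}\right)=V_{\lambda+\gamma+L}\otimes V_{\mu+\delta+L}$ of $V_{L}\otimes V_{L}\supset\mathcal{U}$, restricted to $\mathcal{U}$, composed with projections $P_{\epsilon}$, and combined with $\left(\lambda\ \mu\right)\cong\left(\mu\ \lambda\right)$; quantum dimensions then close the gap between the two bounds. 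A pure upper-bound-plus-quantum-dimension argument along your lines can in fact be completed, but then you must verify that the fixed-pair upper bound vanishes for \emph{every} irreducible $\mathcal{U}$-module outside your candidate list and that the candidates' maximal multiplicities sum exactly to the product of quantum dimensions --- precisely the bookkeeping your ``matching'' step glosses over, and it is most delicate in part (c), where the paper avoids twisted-by-twisted constituent fusions altogether by combining Proposition \ref{fusion rule symmmetry property}, Remark \ref{Dual of U-modules} and the already-established products (\ref{Fusion-Nondiag-Twisted}) and (\ref{Fusion Prod-dia-twisted}).
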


\begin{proof} Consider fusion product of $V_{\lambda+L}\otimes V_{\mu+L}$
and $V_{\gamma+L}\otimes V_{\delta+L}$ as irreducible $V_{L}\otimes V_{L}$-modules.
By fusion rules in Proposition \ref{Fusion-V_L} and Theorem 2.10
in \cite{ADL}, as irreducible $V_{L}\otimes V_{L}$-modules, we have
the following fusion product:

\begin{equation}
\left(V_{\lambda+L}\otimes V_{\mu+L}\right)\boxtimes\left(V_{\gamma+L}\otimes V_{\delta+L}\right)=V_{\lambda+\gamma+L}\otimes V_{\mu+\delta+L}.\label{V_L-fusion-1}
\end{equation}

\emph{Proof of (\ref{Fusion-Diag-Diag}):} If $\lambda+L=\mu+L$ and
$\gamma+L=\delta+L$, then $\lambda+\gamma+L=\mu+\delta+L$. Moreover,
$V_{\lambda+L}\otimes V_{\mu+L}\cong\widetilde{\left(\lambda\ 0\right)}+\widetilde{\left(\lambda\ 1\right)}$,
$V_{\gamma+L}\otimes V_{\delta+L}\cong\widetilde{\left(\gamma\ 0\right)}+\widetilde{\left(\gamma\ 1\right)},$
and $V_{\lambda+\gamma+L}\otimes V_{\mu+\delta+L}\cong\widetilde{\left(\lambda+\gamma\ 0\right)}+\widetilde{\left(\lambda+\gamma\ 1\right)}$
as $\mathcal{U}$-modules.

First we consider fusion rule $N_{\mathcal{U}}\left(_{\widetilde{\left(\lambda\ 0\right)}\ \widetilde{\left(\gamma\ 0\right)}}^{\widetilde{\left(\lambda+\gamma\ \epsilon '\right)}}\right)$,
$\epsilon '\in\left\{ 0,1\right\} $. Take $V=V_{L}\otimes V_{L}$ and
$U=\mathcal{U}$ in Proposition 2.9 in \cite{ADL}, then (\ref{V_L-fusion-1})
implies
\[
1=N_{V_{L}\otimes V_{L}}\left(_{V_{\lambda+L}\otimes V_{\mu+L}\ V_{\gamma+L}\otimes V_{\delta+L}}^{V_{\lambda+\gamma+L}\otimes V_{\mu+\delta+L}}\right)\le N_{\mathcal{U}}\left(_{\widetilde{\left(\lambda\ 0\right)}\ \widetilde{\left(\gamma\ 0\right)}}^{\widetilde{\left(\lambda+\gamma\ 0\right)}+\widetilde{\left(\lambda+\gamma\ 1\right)}}\right).
\]
So $N_{\mathcal{U}}\left(_{\widetilde{\left(\lambda\ 0\right)}\ \widetilde{\left(\gamma\ 0\right)}}^{\widetilde{\left(\lambda+\gamma\ \epsilon'\right)}}\right)=0$
or 1.

Now take $V=\mathcal{U}$ and $U=\mathcal{V}$ in Proposition 2.9
in \cite{ADL}, then
\[
N_{\mathcal{U}}\left(_{\widetilde{\left(\lambda\ 0\right)}\ \widetilde{\left(\gamma\ 0\right)}}^{\widetilde{\left(\lambda+\gamma\ \epsilon'\right)}}\right)\le N_{\mathcal{V}}\left(_{V_{\sqrt{2}\lambda+\sqrt{2}L}\otimes V_{\sqrt{2}L}^{+}\ V_{\sqrt{2}\gamma+\sqrt{2}L}\otimes V_{\sqrt{2}L}^{+}}^{\widetilde{\left(\lambda+\gamma\ \epsilon'\right)}}\right)
\]
Since $\widetilde{\left(\lambda+\gamma\ \epsilon'\right)}=\sum_{\alpha\in\mathcal{S}}V_{\sqrt{2}\left(\lambda+\gamma\right)+\frac{\alpha}{\sqrt{2}}+\sqrt{2}L}\otimes V_{\frac{\alpha}{\sqrt{2}}+\sqrt{2}L}^{\pm}$
where $\pm$ depends on value of $\epsilon',$ we have
\begin{align*}
 & N_{\mathcal{V}}\left(_{V_{\sqrt{2}\lambda+\sqrt{2}L}\otimes V_{\sqrt{2}L}^{+}\ V_{\sqrt{2}\gamma+\sqrt{2}L}\otimes V_{\sqrt{2}L}^{+}}^{\widetilde{\left(\lambda+\gamma\ \epsilon'\right)}}\right)\\
 & =\sum_{\alpha\in\mathcal{S}}N_{\mathcal{V}}\left(_{V_{\sqrt{2}\lambda+\sqrt{2}L}\otimes V_{\sqrt{2}L}^{+}\ V_{\sqrt{2}\gamma+\sqrt{2}L}\otimes V_{\sqrt{2}L}^{+}}^{V_{\sqrt{2}\left(\lambda+\gamma\right)+\frac{\alpha}{\sqrt{2}}+\sqrt{2}L}\otimes V_{\frac{\alpha}{\sqrt{2}}+\sqrt{2}L}^{\pm}}\right)\\
 & =\sum_{\alpha\in\mathcal{S}}N_{V{}_{\sqrt{2}L}}\left(_{V_{\sqrt{2}\lambda+\sqrt{2}L}\ V_{\sqrt{2}\gamma+\sqrt{2}L}}^{V_{\sqrt{2}\left(\lambda+\gamma\right)+\frac{\alpha}{\sqrt{2}}+\sqrt{2}L}}\right)\cdot N_{V_{\sqrt{2}L}^{+}}\left(_{V_{\sqrt{2}L}^{+}\ V_{\sqrt{2}L}^{+}}^{V_{\frac{\alpha}{\sqrt{2}}+\sqrt{2}L}^{\pm}}\right).
\end{align*}

It is clear that
\[
N_{V{}_{\sqrt{2}L}}\left(_{V_{\sqrt{2}\lambda+\sqrt{2}L}\ V_{\sqrt{2}\gamma+\sqrt{2}L}}^{V_{\sqrt{2}\left(\lambda+\gamma\right)+\frac{\alpha}{\sqrt{2}}+\sqrt{2}L}}\right)=N_{V_{\sqrt{2}L}^{+}}\left(_{V_{\sqrt{2}L}^{+}\ V_{\sqrt{2}L}^{+}}^{V_{\frac{\alpha}{\sqrt{2}}+\sqrt{2}L}^{+}}\right)=1
\]
if and only if $\alpha=0$. $N_{V_{\sqrt{2}L}^{+}}\left(_{V_{\sqrt{2}L}^{+}\ V_{\sqrt{2}L}^{+}}^{V_{\frac{\alpha}{\sqrt{2}}+\sqrt{2}L}^{-}}\right)=0$
for $\alpha\in\mathcal{S}$ forces $\epsilon'=0$. So $N_{\mathcal{U}}\left(_{\widetilde{\left(\lambda\ 0\right)}\ \widetilde{\left(\gamma\ 0\right)}}^{\widetilde{\left(\lambda+\gamma\ 0\right)}}\right)\le1$
and $N_{\mathcal{U}}\left(_{\widetilde{\left(\lambda\ 0\right)}\ \widetilde{\left(\gamma\ 0\right)}}^{\widetilde{\left(\lambda+\gamma\ 1\right)}}\right)=0$.
By quantum dimensions in Proposition \ref{quantum dimension}, we
get $\widetilde{\left(\lambda\ 0\right)}\boxtimes\widetilde{\left(\gamma\ 0\right)}=\widetilde{\left(\lambda+\gamma\ 0\right)}$.
We can similarly prove that for $\epsilon,\epsilon_{1}=0,1$, we have
\[
\widetilde{\left(\lambda\ \epsilon\right)}\boxtimes\widetilde{\left(\gamma\ \epsilon_{1}\right)}=\widetilde{\left(\lambda+\gamma\ \epsilon+\epsilon_{1}\right)}.
\]
Thus (\ref{Fusion-Diag-Diag}) has been proved.

\emph{Proof of (\ref{Fusion-Nondiag-Diag}):} We now have $V_{\lambda+L}\otimes V_{\lambda+L}\cong\widetilde{\left(\lambda\ 0\right)}+\widetilde{\left(\lambda\ 1\right)},$
$V_{\gamma+L}\otimes V_{\delta+L}\cong\left(\gamma\ \delta\right)$,
and $V_{\lambda+\gamma+L}\otimes V_{\lambda+\delta+L}\cong\left(\lambda+\gamma\ \lambda+\delta\right)$
as irreducible $\mathcal{U}$-modules. Take $V=V_{L}\otimes V_{L}$
and $U=\mathcal{U}$ in Proposition 2.9 in \cite{ADL}, then (\ref{V_L-fusion-1})
implies
\[
1=N_{V_{L}\otimes V_{L}}\left(_{V_{\lambda+L}\otimes V_{\lambda+L}\ V_{\gamma+L}\otimes V_{\delta+L}}^{V_{\lambda+\gamma+L}\otimes V_{\lambda+\delta+L}}\right)\le N_{\mathcal{U}}\left(_{\widetilde{\left(\lambda\ \epsilon\right)}\ \left(\gamma\ \delta\right)}^{\left(\lambda+\gamma\ \lambda+\delta\right)}\right)
\]
By quantum dimensions in Proposition \ref{quantum dimension} and
Proposition \ref{quantum-product}, we see that
\[
q\dim_{\mathcal{U}}\left(\widetilde{\left(\lambda\ \epsilon\right)}\boxtimes\left(\gamma\ \delta\right)\right)=2.
\]

So $N_{\mathcal{U}}\left(_{\widetilde{\left(\lambda\ \epsilon\right)}\ \left(\gamma\ \delta\right)}^{\left(\lambda+\gamma\ \lambda+\delta\right)}\right)=1$
and hence $\widetilde{\left(\lambda\ \epsilon\right)}\boxtimes\left(\gamma\ \delta\right)=\left(\lambda+\gamma\ \mu+\delta\right),$
as desired.

\emph{Proof of (\ref{Fusion-Nondiag-Nodiag}):} We have $V_{\lambda+L}\otimes V_{\mu+L}\cong\left(\lambda\ \mu\right)$,
$V_{\gamma+L}\otimes V_{\delta+L}\cong\left(\gamma\ \delta\right)$,
$V_{\lambda+\gamma+L}\otimes V_{\mu+\delta+L}\cong\widetilde{\left(\lambda+\gamma\ 0\right)}+\widetilde{\left(\lambda+\gamma\ 1\right)}$,
and $V_{\mu+\gamma+L}\otimes V_{\lambda+\delta+L}\cong\widetilde{\left(\mu+\gamma\ 0\right)}+\widetilde{\left(\mu+\gamma\ 1\right)}$
as $\mathcal{U}$-modules. By the fusion product in (\ref{V_L-fusion-1}),
we know there is a nonzero intertwining operator
\[
I\in I_{V_{L}\otimes V_{L}}\left(\begin{array}{c}
V_{\lambda+\gamma+L}\otimes V_{\mu+\delta+L}\\
V_{\lambda+L}\otimes V_{\mu+L}\ V_{\gamma+L}\otimes V_{\delta+L}
\end{array}\right).
\]
Let $P_{\epsilon}$ be the projection of $V_{\lambda+\gamma+L}\otimes V_{\mu+\delta+L}$
to $\widetilde{\left(\lambda+\gamma\ \epsilon\right)}$ for $\epsilon=0,1.$
Then $P_{\epsilon}I$ is an nonzero intertwining operator in $I_{\mathcal{U}}\left(\begin{array}{c}
\widetilde{\left(\lambda+\gamma\ \epsilon\right)}\\
(\lambda\ \mu)\ (\gamma\ \delta)
\end{array}\right).$ So $N_{\mathcal{U}}\left(_{\left(\lambda\ \mu\right)\ \left(\gamma\ \delta\right)}^{\widetilde{\left(\lambda+\gamma\ \epsilon\right)}}\right)\geq1$
for $\epsilon=0,1.$ Since $(\lambda\ \mu)$ and $(\mu\ \lambda)$
are isomorphic $\mathcal{U}$-modules, $N_{\mathcal{U}}\left(_{\left(\lambda\ \mu\right)\ \left(\gamma\ \delta\right)}^{\widetilde{\left(\mu+\gamma\ \epsilon\right)}}\right)\geq1$
for $\epsilon=0,1.$

By Proposition \ref{quantum-product} and quantum
dimensions in Proposition \ref{quantum dimension} we see that
\[
q\dim_{\mathcal{U}}\left(\left(\lambda\ \mu\right)\boxtimes\left(\gamma\ \delta\right)\right)=4.
\] So we obtain (\ref{Fusion-Nondiag-Nodiag}).

\emph{Proof of (\ref{add}): }Now we have $V_{\lambda+L}\otimes V_{\mu+L}\cong\left(\lambda\ \mu\right),$
$V_{\gamma+L}\otimes V_{\delta+L}\cong\left(\gamma\ \delta\right)$,
$V_{\lambda+\gamma+L}\otimes V_{\mu+\delta+L}\cong\left(\lambda+\gamma\ \mu+\delta\right)$,
and $V_{\mu+\gamma+L}\otimes V_{\lambda+\delta+L}\cong\widetilde{\left(\mu+\gamma\ 0\right)}+\widetilde{\left(\mu+\gamma\ 1\right)}$
as $\mathcal{U}$-modules. Take $V=V_{L}\otimes V_{L}$ and $U=\mathcal{U}$
in Proposition 2.9 in \cite{ADL}, then (\ref{V_L-fusion-1}) implies
\[
1=N_{V_{L}\otimes V_{L}}\left(_{V_{\lambda+L}\otimes V_{\mu+L}\ V_{\gamma+L}\otimes V_{\delta+L}}^{V_{\lambda+\gamma+L}\otimes V_{\mu+\delta+L}}\right)\le N_{\mathcal{U}}\left(_{\left(\lambda\ \mu\right)\ \left(\gamma\ \delta\right)}^{\left(\lambda+\gamma\ \mu+\delta\right)}\right).
\]
So $N_{\mathcal{U}}\left(_{\left(\lambda\ \mu\right)\ \left(\gamma\ \delta\right)}^{\left(\lambda+\gamma\ \mu+\delta\right)}\right)\ge1$.

Using the condition $\mu+\gamma+L=\lambda+\delta+L$ and the proof of (\ref{Fusion-Nondiag-Nodiag}) gives
$N_{\mathcal{U}}\left(_{\left(\lambda\ \mu\right)\ \left(\gamma\ \delta\right)}^{\widetilde{\left(\mu+\gamma\ \epsilon\right)}}\right)\geq1$
for $\epsilon=0,1.$  Applying the formula
$q\dim_{\mathcal{U}}\left(\left(\lambda\ \mu\right)\boxtimes\left(\gamma\ \delta\right)\right)=4$
again to obtain (\ref{add}).

\emph{Proof of (\ref{Fusion-Nondiag-Nondiag-Case 2})}: Now we have
$V_{\lambda+L}\otimes V_{\mu+L}\cong\left(\lambda\ \mu\right)$, $V_{\gamma+L}\otimes V_{\delta+L}\cong\left(\gamma\ \delta\right)$,
$V_{\lambda+\mu+L}\otimes V_{\mu+\delta+L}\cong\left(\lambda+\gamma\ \mu+\delta\right)$
and $V_{\mu+\gamma+L}\otimes V_{\lambda+\delta+L}\cong\left(\mu+\gamma\ \lambda+\delta\right)$
as $\mathcal{U}$-modules.  From the proof of (\ref{add}) we see that
\[
1=N_{V_{L}\otimes V_{L}}\left(_{V_{\lambda+L}\otimes V_{\mu+L}\ V_{\gamma+L}\otimes V_{\delta+L}}^{V_{\lambda+\gamma+L}\otimes V_{\mu+\delta+L}}\right)\le N_{\mathcal{U}}\left(_{\left(\lambda\ \mu\right)\ \left(\gamma\ \delta\right)}^{\left(\lambda+\gamma\ \mu+\delta\right)}\right)
\]
and
\[
1=N_{V_{L}\otimes V_{L}}\left(_{V_{\mu+L}\otimes V_{\lambda+L}\ V_{\gamma+L}\otimes V_{\delta+L}}^{V_{\mu+\gamma+L}\otimes V_{\lambda+\delta+L}}\right)\le N_{\mathcal{U}}\left(_{\left(\mu\ \lambda\right)\ \left(\gamma\ \delta\right)}^{\left(\mu+\gamma\ \lambda+\delta\right)}\right).
\]

Notice that as irreducible $\mathcal{U}$-modules, $\left(\mu\ \lambda\right)\cong\left(\lambda\ \mu\right)$.
So $N_{\mathcal{U}}\left(_{\left(\lambda\ \mu\right)\ \left(\gamma\ \delta\right)}^{\left(\mu+\gamma\ \lambda+\delta\right)}\right)\ge1$. The result follows immediately by the fact that $q\dim_{\mathcal{U}}(\lambda+\gamma\ \mu+\delta)=q\dim_{\mathcal{U}}(\mu+\gamma\ \lambda+\delta)=2.$

\emph{Proof of} \emph{(\ref{Fusion-Nondiag-Twisted}):} First by Proposition
\ref{quantum-product} and quantum dimensions in Proposition \ref{quantum dimension}
\[
q\dim_{\mathcal{U}}\left(\left(\lambda\ \mu\right)\boxtimes\widehat{\left(\gamma\ \epsilon\right)}\right)=q\dim_{\mathcal{U}}\left(\lambda\ \mu\right)\cdot q\dim_{\mathcal{U}}\widehat{\left(\gamma\ \epsilon\right)}=2\sqrt{\left|L^{\circ}/L\right|}.
\]

By fusion rules in Proposition \ref{Fusion-V_L+}, for any $\alpha,\beta\in\mathcal{S}$,
$N_{V_{\sqrt{2}L}^{+}}\left(_{V_{\frac{\lambda-\mu+\alpha}{\sqrt{2}}+\sqrt{2}L}\ V_{\sqrt{2}L}^{T_{\gamma+\beta,\pm}}}^{\ \ \ W}\right)\not=0$
only if $W=V_{\sqrt{2}L}^{T_{\delta,}\pm}$ for some $\delta\in L^{\circ}$.
So $
N_{\mathcal{U}}\left(_{\left(\lambda\ \mu\right)\ \widehat{\left(\gamma\ 0\right)}}^{\left(\gamma_1\ \delta\right)}\right)=N_{\mathcal{U}}\left(_{\left(\lambda\ \mu\right)\ \widehat{\left(\gamma\ 0\right)}}^{\widetilde{\left(\gamma_2\ \epsilon\right)}}\right)=0$
for any $\gamma_1, \gamma_2, \delta\in L^{\circ}$ with $\gamma_1+L\not=\delta+L$ and $\epsilon=0,1.$
Using the classification of irreducible modules in Proposition \ref{all modules} we see that
 $\left(\lambda\ \mu\right)\boxtimes\widehat{\left(\gamma\ \epsilon\right)}=\widehat{\left(\delta_{1}\ \epsilon_{1}\right)}+\widehat{\left(\delta_{2}\ \epsilon_{2}\right)}$
for some $\delta_{1},\delta_{2}\in L^{\circ}$ and $\epsilon_{1},\epsilon_{2}\in\left\{ 0,1\right\} $.
So we only need to determine $\delta_{1},\delta_{1}$, $\epsilon_{1}$
and $\epsilon_{2}$.

We first determine $N_{\mathcal{U}}\left(_{\left(\lambda\ \mu\right)\ \widehat{\left(\gamma\ 0\right)}}^{\widehat{\left(\delta\ 0\right)}}\right)$ for $\delta\in L^{\circ}.$
From Proposition 2.9 in \cite{ADL} with $V=\mathcal{U}$, $U=\mathcal{V},$
we have
\[
N_{\mathcal{U}}\left(_{\left(\lambda\ \mu\right)\ \widehat{\left(\lambda_{1}\ 0\right)}}^{\widehat{\left(\delta\ 0\right)}}\right)\le N_{\mathcal{V}}\left(_{V_{\frac{\lambda+\mu}{\sqrt{2}}+\sqrt{2}L}\otimes V_{\frac{\lambda-\mu}{\sqrt{2}}+\sqrt{2}L\ }V_{\frac{\gamma}{\sqrt{2}}+\sqrt{2}L}\otimes V_{\sqrt{2}L}^{T_{\chi_{\gamma}},+}}^{\widehat{\left(\delta\ 0\right)}}\right).
\]

From Proposition \ref{all modules} we have
\begin{align*}
 & N_{\mathcal{V}}\left(_{V_{\frac{\lambda+\mu}{\sqrt{2}}+\sqrt{2}L}\otimes V_{\frac{\lambda-\mu}{\sqrt{2}}+\sqrt{2}L\ }V_{\frac{\gamma}{\sqrt{2}}+\sqrt{2}L}\otimes V_{\sqrt{2}L}^{T_{\chi_{\gamma}},+}}^{\widehat{\left(\delta\ 0\right)}}\right)\\
 & \leq \sum_{\alpha\in\mathcal{S},\chi_{\delta}\left(\sqrt{2}\alpha\right)=1}N_{\mathcal{V}}\left(_{V_{\frac{\lambda+\mu}{\sqrt{2}}+\sqrt{2}L}\otimes V_{\frac{\lambda-\mu}{\sqrt{2}}+\sqrt{2}L\ }V_{\frac{\gamma}{\sqrt{2}}+\sqrt{2}L}\otimes V_{\sqrt{2}L}^{T_{\chi_{\gamma}},+}}^{V_{\frac{\delta+\alpha}{\sqrt{2}}+\sqrt{2}L}\otimes V_{\sqrt{2}L}^{T_{\chi_{\delta+\alpha}},+}}\right)\\
 & +\sum_{\alpha\in\mathcal{S},\chi_{\delta}\left(\sqrt{2}\alpha\right)=-1}N_{\mathcal{V}}\left(_{V_{\frac{\lambda+\mu}{\sqrt{2}}+\sqrt{2}L}\otimes V_{\frac{\lambda-\mu}{\sqrt{2}}+\sqrt{2}L\ }V_{\frac{\gamma}{\sqrt{2}}+\sqrt{2}L}\otimes V_{\sqrt{2}L}^{T_{\chi_{\gamma}},+}}^{V_{\frac{\delta+\alpha}{\sqrt{2}}+\sqrt{2}L}\otimes V_{\sqrt{2}L}^{T_{\chi_{\delta+\alpha}},-}}\right)\\
 & =\sum_{\alpha\in\mathcal{S},\chi_{\delta}\left(\sqrt{2}\alpha\right)=1}N_{V_{\sqrt{2}L}}\left(_{V_{\frac{\lambda+\mu}{\sqrt{2}}+\sqrt{2}L}\ V_{\frac{\gamma}{\sqrt{2}}+\sqrt{2}L}}^{V_{\frac{\delta+\alpha}{\sqrt{2}}+\sqrt{2}L}}\right)\cdot N_{V_{\sqrt{2}L}^{+}}\left(_{V_{\frac{\lambda-\mu}{\sqrt{2}}+\sqrt{2}L\ }V_{\sqrt{2}L}^{T_{\chi_{\gamma}},+}}^{V_{\sqrt{2}L}^{T_{\chi_{\delta+\alpha}},+}}\right)\\
 & +\sum_{\alpha\in\mathcal{S},\chi_{\delta}\left(\sqrt{2}\alpha\right)=-1}N_{V_{\sqrt{2}L}}\left(_{V_{\frac{\lambda+\mu}{\sqrt{2}}+\sqrt{2}L}\ V_{\frac{\gamma}{\sqrt{2}}+\sqrt{2}L}}^{V_{\frac{\delta+\alpha}{\sqrt{2}}+\sqrt{2}L}}\right)\cdot N_{V_{\sqrt{2}L}^{+}}\left(_{V_{\frac{\lambda-\mu}{\sqrt{2}}+\sqrt{2}L\ }V_{\sqrt{2}L}^{T_{\chi_{\gamma}},+}}^{V_{\sqrt{2}L}^{T_{\chi_{\delta+\alpha}},-}}\right).
\end{align*}

It is clear from the fusion rules for vertex operator algebra $V_{\sqrt{2}L}$ (see Proposition \ref{Fusion-V_L}) that if $\delta+L\ne \lambda+\mu+\gamma +L$ then
\[
N_{\mathcal{V}}\left(_{V_{\frac{\lambda+\mu}{\sqrt{2}}+\sqrt{2}L}\otimes V_{\frac{\lambda-\mu}{\sqrt{2}}+\sqrt{2}L\ }V_{\frac{\gamma}{\sqrt{2}}+\sqrt{2}L}\otimes V_{\sqrt{2}L}^{T_{\chi_{\gamma}},+}}^{\widehat{\left(\delta\ 0\right)}}\right)=0.
\]
We now assume that $\delta=\lambda+\mu+\gamma.$ For $\alpha\in\mathcal{S}$ with $\chi_{\delta}\left(\sqrt{2}\alpha\right)=1$,
\[
N_{V_{\sqrt{2}L}}\left(_{V_{\frac{\lambda+\mu}{\sqrt{2}}+\sqrt{2}L}\ V_{\frac{\gamma}{\sqrt{2}}+\sqrt{2}L}}^{V_{\frac{\lambda+\mu+\gamma+\alpha}{\sqrt{2}}+\sqrt{2}L}}\right)=N_{V_{\sqrt{2}L}^{+}}\left(_{V_{\frac{\lambda-\mu}{\sqrt{2}}+\sqrt{2}L\ }V_{\sqrt{2}L}^{T_{\chi_{\gamma}},+}}^{V_{\sqrt{2}L}^{T_{\chi_{\lambda+\mu+\gamma+\alpha}},+}}\right)=1
\]
only if $\alpha\in 2L$ and $\chi_{\lambda+\mu+\gamma+\alpha}=\chi_{\gamma}^{\left(\frac{\lambda-\mu}{\sqrt{2}}\right)}$.  Note that  $\alpha\in {\cal S}$
lies in $2L$ if and only if $\alpha=0.$
Since
\[
\chi_{\lambda+\mu+\gamma}\left(\sqrt{2}\alpha_{i}\right)=\left(-1\right)^{\frac{\left\langle \alpha_{i},\alpha_{i}\right\rangle }{2}+\left\langle \lambda+\mu+\gamma,\alpha_{i}\right\rangle },
\]
and
\[
\chi_{\gamma}^{\left(\frac{\lambda-\mu}{\sqrt{2}}\right)}\left(\sqrt{2}\alpha_{i}\right)=\left(-1\right)^{\left\langle \lambda-\mu,\alpha_{i}\right\rangle }\chi_{\gamma}\left(\sqrt{2}\alpha_{i}\right)=\left(-1\right)^{\frac{\left\langle \alpha_{i},\alpha_{i}\right\rangle }{2}+\left\langle \lambda-\mu+\gamma,\alpha_{i}\right\rangle },
\]
we see that  $\chi_{\lambda+\mu+\gamma}=\chi_{\gamma}^{\left(\frac{\lambda-\mu}{\sqrt{2}}\right)}.$
So
\[
\sum_{\alpha\in\mathcal{S},\chi_{\delta}\left(\sqrt{2}\alpha\right)=1}N_{V_{\sqrt{2}L}}\left(_{V_{\frac{\lambda+\mu}{\sqrt{2}}+\sqrt{2}L}\ V_{\frac{\gamma}{\sqrt{2}}+\sqrt{2}L}}^{V_{\frac{\delta+\alpha}{\sqrt{2}}+\sqrt{2}L}}\right)\cdot N_{V_{\sqrt{2}L}^{+}}\left(_{V_{\frac{\lambda-\mu}{\sqrt{2}}+\sqrt{2}L\ }V_{\sqrt{2}L}^{T_{\chi_{\gamma}},+}}^{V_{\sqrt{2}L}^{T_{\chi_{\delta+\alpha}},+}}\right)=1.
\]

For $\alpha\in\mathcal{S}$ with $\chi_{\delta}\left(\sqrt{2}\alpha\right)=-1$, we clearly have
\[
N_{V_{\sqrt{2}L}}\left(_{V_{\frac{\lambda+\mu}{\sqrt{2}}+\sqrt{2}L}\ V_{\frac{\gamma}{\sqrt{2}}+\sqrt{2}L}}^{V_{\frac{\delta+\alpha}{\sqrt{2}}+\sqrt{2}L}}\right)=0
\]
and consequently
\[
\sum_{\alpha\in\mathcal{S},\chi_{\delta}\left(\sqrt{2}\alpha\right)=-1}N_{V_{\sqrt{2}L}}\left(_{V_{\frac{\lambda+\mu}{\sqrt{2}}+\sqrt{2}L}\ V_{\frac{\gamma}{\sqrt{2}}+\sqrt{2}L}}^{V_{\frac{\delta+\alpha}{\sqrt{2}}+\sqrt{2}L}}\right)\cdot N_{V_{\sqrt{2}L}^{+}}\left(_{V_{\frac{\lambda-\mu}{\sqrt{2}}+\sqrt{2}L\ }V_{\sqrt{2}L}^{T_{\chi_{\gamma}},+}}^{V_{\sqrt{2}L}^{T_{\chi_{\delta+\alpha}},-}}\right)=0.
\]
Thus
\[
N_{\mathcal{U}}\left(_{\left(\lambda\ \mu\right)\ \widehat{\left(\gamma\ 0\right)}}^{\widehat{\left(\lambda+\mu+\gamma\ 0\right)}}\right)\le1\ \mbox{and\ }N_{\mathcal{U}}\left(_{\left(\lambda\ \mu\right)\ \widehat{\left(\gamma\ 0\right)}}^{\widehat{\left(\delta\ 0\right)}}\right)=0\ \mbox{if}\ \delta+L\not=\lambda+\mu+\gamma+L.
\]

Similarly, $N_{\mathcal{U}}\left(_{\left(\lambda\ \mu\right)\ \widehat{\left(\gamma\ 0\right)}}^{\widehat{\left(\lambda+\mu+\gamma\ 1\right)}}\right)\le1$
and $N_{\mathcal{U}}\left(_{\left(\lambda\ \mu\right)\ \widehat{\left(\gamma\ 0\right)}}^{\widehat{\left(\delta\ 1\right)}}\right)=0$
if $\delta+L\not=\lambda+\mu+\gamma+L.$ By quantum dimensions in Proposition \ref{quantum dimension}, we
get
\[
N_{\mathcal{U}}\left(_{\left(\lambda\ \mu\right)\ \widehat{\left(\gamma\ 0\right)}}^{\widehat{\left(\lambda+\mu+\gamma\ 0\right)}}\right)=N_{\mathcal{U}}\left(_{\left(\lambda\ \mu\right)\ \widehat{\left(\gamma\ 0\right)}}^{\widehat{\left(\lambda+\mu+\gamma\ 1\right)}}\right)=1,
\]
as expected.

\emph{Proof of (\ref{Fusion Prod-dia-twisted}):} First by Proposition
\ref{quantum-product} and quantum dimensions in Proposition \ref{quantum dimension}
\[
q\dim_{\mathcal{U}}\left(\widetilde{\left(\lambda\ \epsilon\right)}\boxtimes\widehat{\left(\mu\ \epsilon_{1}\right)}\right)=q\dim_{\mathcal{U}}\widetilde{\left(\lambda\ \epsilon\right)}\cdot q\dim_{\mathcal{U}}\widehat{\left(\mu\ \epsilon_{1}\right)}=\sqrt{\left|L^{\circ}/L\right|}.
\]
By fusion rules in Proposition \ref{Fusion-V_L+}, for any $\alpha,\beta\in\mathcal{S}$,
$N_{V_{\sqrt{2}L}^{+}}\left(_{V_{\frac{\alpha}{\sqrt{2}}+\sqrt{2}L}^{+}\ V_{\sqrt{2}L}^{T_{\mu+\beta,\pm}}}^{\ \ W}\right)\not=0$
only if $W=V_{\sqrt{2}L}^{T_{\gamma,}\pm}$ for some $\gamma\in L^{\circ}$.
So $N_{\mathcal{U}}\left(_{\widetilde{\left(\lambda\ \epsilon\right)}\ \widehat{\left(\mu\ \epsilon_{1}\right)}}^{\left(\gamma\ \delta\right)}\right)=N_{\mathcal{U}}\left(_{\widetilde{\left(\lambda\ \epsilon\right)}\ \widehat{\left(\mu\ \epsilon_{1}\right)}}^{\widetilde{\left(\gamma_1\ \epsilon\right)}}\right)=0$
for any $\gamma,\delta, \gamma_1\in L^{\circ}$ with $\gamma+L\not=\delta+L$ and $\epsilon, \epsilon_1=0,1.$
As a result, $\widetilde{\left(\lambda\ \epsilon\right)}\boxtimes\widehat{\left(\mu\ \epsilon_{1}\right)}=\widehat{\left(\delta\ \epsilon_{2}\right)}$
for some $\delta\in L^{\circ},\epsilon_{1}, \epsilon_{2}\in\left\{ 0,1\right\}.$

First we consider $N_{\mathcal{U}}\left(_{\widetilde{\left(\lambda\ 0\right)}\ \widehat{\left(\mu\ 0\right)}}^{\widehat{\left(\delta\ 0\right)}}\right)$. The proof of (\ref{Fusion-Nondiag-Twisted}) shows that
$N_{\mathcal{U}}\left(_{\widetilde{\left(\lambda\ 0\right)}\ \widehat{\left(\mu\ 0\right)}}^{\widehat{\left(2\lambda+\mu\ 0\right)}}\right)\le1$ and $N_{\mathcal{U}}\left(_{\widetilde{\left(\lambda\ 0\right)}\ \widehat{\left(\mu\ 0\right)}}^{\widehat{\left(\delta\ 0\right)}}\right)=0$ if $\delta+L\not=2\lambda+\mu+L.$

Now we consider $N_{\mathcal{U}}\left(_{\widetilde{\left(\lambda\ 0\right)}\ \widehat{\left(\mu\ 0\right)}}^{\widehat{\left(\delta\ 1\right)}}\right)$. We have
\[
N_{\mathcal{U}}\left(_{\widetilde{\left(\lambda\ 0\right)}\ \widehat{\left(\mu\ 0\right)}}^{\widehat{\left(\delta\ 1\right)}}\right)\le N_{\mathcal{V}}\left(_{V_{\sqrt{2}\lambda+\sqrt{2}L}\otimes V_{\sqrt{2}L\ }^{+}V_{\frac{\mu}{\sqrt{2}}+\sqrt{2}L}\otimes V_{\sqrt{2}L}^{T_{\chi_{\mu}},+}}^{\widehat{\left(\delta\ 1\right)}}\right).
\]

 Using $\widehat{\left(\delta\ 1\right)}=\sum_{\alpha\in\mathcal{S},\chi_{\delta}\left(\sqrt{2}\alpha\right)=1}V_{\frac{\delta+\alpha}{\sqrt{2}}+\sqrt{2}L}\otimes V_{\sqrt{2}L}^{T_{\chi_{\delta+\alpha}},-}+\sum_{\alpha\in\mathcal{S},\chi_{\delta}\left(\sqrt{2}\alpha\right)=-1}V_{\frac{\delta+\alpha}{\sqrt{2}}+\sqrt{2}L}\otimes V_{\sqrt{2}L}^{T_{\chi_{\delta+\alpha}},+}$ gives
\begin{align*}
 & N_{\mathcal{V}}\left(_{V_{\sqrt{2}\lambda+\sqrt{2}L}\otimes V_{\sqrt{2}L\ }^{+}V_{\frac{\mu}{\sqrt{2}}+\sqrt{2}L}\otimes V_{\sqrt{2}L}^{T_{\chi_{\mu}},+}}^{\widehat{\left(\delta\ 1\right)}}\right)\\
 & \leq \sum_{\alpha\in\mathcal{S},\chi_{\delta}\left(\sqrt{2}\alpha\right)=1}N_{\mathcal{V}}\left(_{V_{\sqrt{2}\lambda+\sqrt{2}L}\otimes V_{\sqrt{2}L\ }^{+}V_{\frac{\mu}{\sqrt{2}}+\sqrt{2}L}\otimes V_{\sqrt{2}L}^{T_{\chi_{\mu}},+}}^{V_{\frac{\delta+\alpha}{\sqrt{2}}+\sqrt{2}L}\otimes V_{\sqrt{2}L}^{T_{\chi_{\delta+\alpha}},-}}\right)\\
 & +\sum_{\alpha\in\mathcal{S},\chi_{\delta}\left(\sqrt{2}\alpha\right)=-1}N_{\mathcal{V}}\left(_{V_{\sqrt{2}\lambda+\sqrt{2}L}\otimes V_{\sqrt{2}L\ }^{+}V_{\frac{\mu}{\sqrt{2}}+\sqrt{2}L}\otimes V_{\sqrt{2}L}^{T_{\chi_{\mu}},+}}^{V_{\frac{\delta+\alpha}{\sqrt{2}}+\sqrt{2}L}\otimes V_{\sqrt{2}L}^{T_{\chi_{\delta+\alpha}},+}}\right)\\
 & =\sum_{\alpha\in\mathcal{S},\chi_{\delta}\left(\sqrt{2}\alpha\right)=1}N_{V_{\sqrt{2}L}}\left(_{V_{\sqrt{2}\lambda+\sqrt{2}L}\ V_{\frac{\mu}{\sqrt{2}}+\sqrt{2}L}}^{V_{\frac{\delta+\alpha}{\sqrt{2}}+\sqrt{2}L}}\right)\cdot N_{V_{\sqrt{2}L}^{+}}\left(_{V_{\sqrt{2}L\ }^{+}V_{\sqrt{2}L}^{T_{\chi_{\mu}},+}}^{V_{\sqrt{2}L}^{T_{\chi_{\delta+\alpha}},-}}\right)\\
 & +\sum_{\alpha\in\mathcal{S},\chi_{\delta}\left(\sqrt{2}\alpha\right)=-1}N_{V_{\sqrt{2}L}}\left(_{V_{\sqrt{2}\lambda+\sqrt{2}L}\ V_{\frac{\mu}{\sqrt{2}}+\sqrt{2}L}}^{V_{\frac{\delta+\alpha}{\sqrt{2}}+\sqrt{2}L}}\right)\cdot N_{V_{\sqrt{2}L}^{+}}\left(_{V_{\sqrt{2}L\ }^{+}V_{\sqrt{2}L}^{T_{\chi_{\mu}},+}}^{V_{\sqrt{2}L}^{T_{\chi_{\delta+\alpha}},+}}\right).
\end{align*}

Clearly, $N_{\mathcal{V}}\left(_{V_{\sqrt{2}\lambda+\sqrt{2}L}\otimes V_{\sqrt{2}L\ }^{+}V_{\frac{\mu}{\sqrt{2}}+\sqrt{2}L}\otimes V_{\sqrt{2}L}^{T_{\chi_{\mu}},+}}^{\widehat{\left(\delta\ 1\right)}}\right)=0$ if $\delta+L\ne 2\lambda+\mu+L.$ So we can assume that $\delta=2\mu+\lambda.$
Then for any $\alpha\ne 0$ we have $N_{V_{\sqrt{2}L}}\left(_{V_{\sqrt{2}\lambda+\sqrt{2}L}\ V_{\frac{\mu}{\sqrt{2}}+\sqrt{2}L}}^{V_{\frac{\delta+\alpha}{\sqrt{2}}+\sqrt{2}L}}\right)=0.$
So
\begin{align*}
& N_{\mathcal{V}}\left(_{V_{\sqrt{2}\lambda+\sqrt{2}L}\otimes V_{\sqrt{2}L\ }^{+}V_{\frac{\mu}{\sqrt{2}}+\sqrt{2}L}\otimes V_{\sqrt{2}L}^{T_{\chi_{\mu}},+}}^{\widehat{\left(2\lambda+\mu 1\right)}}\right)\\
& \leq
  N_{V_{\sqrt{2}L}}\left(_{V_{\sqrt{2}\lambda+\sqrt{2}L}\ V_{\frac{\mu}{\sqrt{2}}+\sqrt{2}L}}^{V_{\frac{2\lambda+\mu}{\sqrt{2}}+\sqrt{2}L}}\right)\cdot N_{V_{\sqrt{2}L}^{+}}\left(_{V_{\sqrt{2}L\ }^{+}V_{\sqrt{2}L}^{T_{\chi_{\mu}},+}}^{V_{\sqrt{2}L}^{T_{\chi_{2\lambda+\mu}},-}}\right).
  \end{align*}
By fusion rules in Proposition \ref{Fusion-V_L+}, $N_{V_{\sqrt{2}L}^{+}}\left(_{V_{\sqrt{2}L\ }^{+}V_{\sqrt{2}L}^{T_{\chi_{\mu}},+}}^{V_{\sqrt{2}L}^{T_{\chi_{\delta+\alpha}},-}}\right)=0$
for any $\delta\in L^{\circ}$ and $\alpha\in\mathcal{S}$.
 Thus $N_{\mathcal{V}}\left(_{V_{\sqrt{2}\lambda+\sqrt{2}L}\otimes V_{\sqrt{2}L\ }^{+}V_{\frac{\mu}{\sqrt{2}}+\sqrt{2}L}\otimes V_{\sqrt{2}L}^{T_{\chi_{\mu}},+}}^{\widehat{\left(\delta\ 1\right)}}\right)=0$
for any $\delta\in L^{\circ}$. By counting quantum dimensions, we
obtain
\[
\widetilde{\left(\lambda\ 0\right)}\boxtimes\widehat{\left(\mu\ 0\right)}=\widehat{\left(2\lambda+\mu\ 0\right).}
\]

Use similar argument, we can prove for $\epsilon,\epsilon_{1}=0,1$,
$\widetilde{\left(\lambda\ \epsilon\right)}\boxtimes\widehat{\left(\mu\ \epsilon_{1}\right)}=\widehat{\left(2\lambda+\mu\ \epsilon+\epsilon_{1}\right)}.$

\emph{Proof of (\ref{Fusion-Twisted-Twisted-Case 1}):} Recall that $\mathcal{T}$
is a complete set of representatives of $L$ in $L^{\circ}$ and $\left|\mathcal{T}\right|=l.$ If $\frac{\lambda+\mu}{2}\in L^{\circ}$, by
Proposition \ref{fusion rule symmmetry property}, Remark \ref{Dual of U-modules},
and fusion product (\ref{Fusion Prod-dia-twisted}), we see that $N_{\mathcal{U}}\left(_{\widehat{\left(\lambda\ \epsilon\right)}\ \widehat{\left(\mu\ \epsilon_{1}\right)}}^{\widetilde{\left(\frac{\lambda+\mu}{2}\ \epsilon-\epsilon_{1}\right)}}\right)=1.$
 Note that $\widehat{\left(\lambda+\alpha\ \epsilon\right)}\cong\widehat{\left(\lambda\ \epsilon\right)}$
for $\lambda\in\mathcal{T},\alpha\in L$ and $\epsilon=0,1$. Then by Proposition
\ref{fusion rule symmmetry property}, Remark \ref{Dual of U-modules},
and fusion product (\ref{Fusion Prod-dia-twisted}), we also have
$N_{\mathcal{U}}\left(_{\widehat{\left(\lambda+2\gamma\ \epsilon\right)}\ \widehat{\left(\mu\ \epsilon_{1}\right)}}^{\widetilde{\left(\frac{\lambda+\mu}{2}+\gamma\ \epsilon-\epsilon_{1}\right)}}\right)=1$
for all $\gamma\in G$. Clearly,  for $\gamma_{1},\gamma_{2}\in G$
with $\gamma_{1}\not=\gamma_{2}$, $\frac{\lambda+\mu}{2}+\gamma_{1}+L\not=\frac{\lambda+\mu}{2}+\gamma_{2}+L$.
So $\widehat{\left(\frac{\lambda+\mu}{2}+\gamma_{1}\ \epsilon-\epsilon_{1}\right)}$ and $\widehat{\left(\frac{\lambda+\mu}{2}+\gamma_{2}\ \epsilon-\epsilon_{1}\right)}$
are not isomorphic ${\cal U}$-modules. Also notice that for any $\gamma\in G$, $\widehat{\left(\lambda+2\gamma\ \epsilon\right)}\cong\widehat{\left(\lambda\ \epsilon\right)}$.
Assume $\left|G\right|=n$, then the number of isomorphism classes
of $\widetilde{\left(\frac{\lambda+\mu}{2}+\gamma\ \epsilon-\epsilon_{1}\right)}$
such that $N_{\mathcal{U}}\left(_{\widehat{\left(\lambda\ \epsilon\right)}\ \widehat{\left(\mu\ \epsilon_{1}\right)}}^{\widetilde{\left(\frac{\lambda+\mu}{2}+\gamma\ \epsilon-\epsilon_{1}\right)}}\right)=1$
is equal to $n$.

By Proposition \ref{fusion rule symmmetry property}, Remark \ref{Dual of U-modules},
and fusion product (\ref{Fusion-Nondiag-Twisted}), we see that $N_{\mathcal{U}}\left(_{\widehat{\left(\lambda\ \epsilon\right)}\ \widehat{\left(\mu\ \epsilon_{1}\right)}}^{\left(\lambda+\mu-\delta\ \delta\right)}\right)=1$
for any $\delta\in\mathcal{T}$ such that $\delta+L\not=\lambda+\mu-\delta+L$.
That is, $N_{\mathcal{U}}\left(_{\widehat{\left(\lambda\ \epsilon\right)}\ \widehat{\left(\mu\ \epsilon_{1}\right)}}^{\left(\mu+\lambda-\delta\ \delta\right)}\right)=1$
for all $\delta\in\mathcal{T}$ satisfying that $\delta$ cannot be
written of the from $\frac{\lambda+\mu}{2}+\gamma$ with $\gamma\in G$.
The number of such $\delta\in\mathcal{T}$ is $l-n$. Also note that
$\left(\lambda\ \mu\right)\cong\left(\mu\ \lambda\right)$ as irreducible
$\mathcal{U}$-modules for any $\lambda,\mu\in\mathcal{T}$. Thus
the number of isomorphism classes of $\left(\lambda+\mu-\delta\ \delta\right)$
such that $N_{\mathcal{U}}\left(_{\widehat{\left(\lambda\ \epsilon\right)}\ \widehat{\left(\mu\ \epsilon_{1}\right)}}^{\left(\lambda+\mu-\delta\ \delta\right)}\right)=1$
is $\frac{l-n}{2}$. By counting quantum dimensions, we see that
\[
\widehat{\left(\lambda\ \epsilon\right)}\boxtimes\widehat{\left(\mu\ \epsilon_{1}\right)}=\sum_{\gamma\in G}\widetilde{\left(\frac{\lambda+\mu}{2}+\gamma\ \epsilon-\epsilon_{1}\right)}+\sum_{\delta\in\mathcal{T},\delta\not=\frac{\lambda+\mu}{2}+\gamma,\gamma\in G}\left(\lambda+\mu-\delta\ \delta\right).
\]

\emph{Proof of (\ref{Fusion-Twisted-Twisted-Case 2}):} By Proposition
\ref{fusion rule symmmetry property}, Remark \ref{Dual of U-modules},
and fusion product (\ref{Fusion-Nondiag-Twisted}), we see that
\[
N_{\mathcal{U}}\left(_{\widehat{\left(\lambda\ \epsilon\right)}\ \widehat{\left(\mu\ \epsilon_{1}\right)}}^{\left(\lambda+\mu-\delta\ \delta\right)}\right)=1\ \mbox{for\ any}\ \delta\in\mbox{\ensuremath{\mathcal{T\ }}such\ }\mbox{that\ }\delta+L\not=\lambda+\mu-\delta+L.
\]
 Since $\frac{\lambda+\mu}{2}\not\in L^{\circ}$, we see that every
$\delta\in\mathcal{T}$ satisfy such condition. Thus the number of
such $\delta$ is equal to $l$. Notice that $\left(\lambda\ \mu\right)\cong\left(\mu\ \lambda\right)$
as irreducible $\mathcal{U}$-modules for any $\lambda,\mu\in\mathcal{T}.$
Thus the number of isomorphism classes of $\left(\lambda+\mu-\delta\ \delta\right)$
such that $N_{\mathcal{U}}\left(_{\widehat{\left(\lambda\ \epsilon\right)}\ \widehat{\left(\mu\ \epsilon_{1}\right)}}^{\left(\lambda+\mu-\delta\ \delta\right)}\right)=1$
is $\frac{l}{2}$. Now the quantum dimension of $\sum_{\delta\in\mathcal{T},\delta+L\not\not=\lambda+\mu-\delta+L}\left(\lambda+\mu-\delta\ \delta\right)$
is $l$ and the proof of (\ref{Fusion-Twisted-Twisted-Case 2}) is complete.

\end{proof}

\end{document}